\newcommand{\ot}{\leftarrow}
\newcommand{\longto}{\longrightarrow}
\newcommand{\ralf}{}
\newcommand{\za}{\alpha}
\newcommand{\zb}{\beta}
\newcommand{\zD}{\Delta}
\newcommand{\zg}{\gamma}
\newcommand{\zG}{\Gamma}
\newcommand{\zs}{\sigma}
\newcommand{\Aut}{\textup{Aut}\,\mathcal{A}}
\newcommand{\aut}{\textup{Aut}^+\mathcal{A}}
\newcommand{\homeo}{\textup{Homeo}^+(S,M)}
\newcommand{\homeozero}{\textup{Homeo}_0(S,M)}
\newcommand{\homeos}{\textup{Homeo}^+(S,\partial S)}
\newcommand{\mg}{\mathcal{MG}(S,M)}
\newcommand{\mgm}{\mathcal{MG}_{\bowtie}(S,M)}
\newcommand{\Mod}{\mathcal{M}od\,S}
\newcommand{\ZZ}{\mathbb{Z}}
\newcommand{\fbar}{\overline{f}}
\newcommand{\gbar}{\overline{g}}
\newcommand{\mubar}{\overline{\mu}}
\def\amas{\mathbf{x}}
\def\Autp{{\rm Aut}^+ {\cal A}}
\def\A{\mathbb A}
\newtheorem{theorem}{Theorem}[section] 
\newtheorem{proposition}[theorem]{Proposition}
\newtheorem{corollary}[theorem]{Corollary}
\newtheorem{lemma}[theorem]{Lemma}
\title{Cluster Automorphisms}
\author{Ibrahim Assem, Ralf Schiffler and Vasilisa Shramchenko}
\begin{document}
\maketitle
\begin{abstract}
In this article, we introduce the notion of cluster automorphism of a given cluster algebra as a $\ZZ$-automorphism of the cluster algebra that sends a cluster to another and commutes with mutations. We study the group of cluster automorphisms in detail for acyclic cluster algebras and cluster algebras from surfaces, and we compute this group explicitly for the Dynkin types and the euclidean types. 
\end{abstract}
\begin{section}{Introduction} Cluster algebras were introduced by Fomin and Zelevinsky in \cite{FZ1,FZ2} in the context of canonical bases and total positivity. These are $
\mathbb{Z}$-algebras whose generators are grouped into sets called clusters,
and one passes from one cluster to another using an operation called
mutation.

We are interested in the question whether one can study cluster
algebras as a category. This means understanding the morphisms which
preserve their very particular structure, that is, which keep invariant the
grouping of generators into clusters and are compatible with mutations. 
As a
first step in this direction, we study here what we call cluster
automorphisms. We define a cluster automorphism of
a given cluster algebra as an
automorphism of $\mathbb{Z}$-algebras sending a cluster to another and
commuting with mutations. 
 Thus, in this paper, we study the symmetries of a given cluster
algebra and compute the cluster automorphism group for the best known
classes of cluster algebras, those arising from an acyclic quiver and those
arising from a surface.
Observe that, in \cite{FZ2}, Fomin and Zelevinsky have considered a related notion of strong isomorphisms, by which they mean an isomorphism of the cluster
algebras which maps every seed to an isomorphic seed. As will follow from
our results, a strong automorphism of a cluster algebra is what we call here a
direct cluster automorphism.

Let $\mathcal{A}=\mathcal{A}(\mathbf{x},Q)$ be a cluster algebra. Among the most interesting properties of the automorphism group $\Aut $ of $\mathcal{A}$ is the fact that an element of this group sends the quiver $Q$ either to itself or to  the opposite quiver $Q^{\textup{op}}$. This allows to define a subgroup $\aut$ of $\Aut$ consisting of those automorphisms sending $Q$ to itself. 
We prove that the index of $\aut$ in $\Aut$ is two if and only if $Q$ is mutation equivalent to $Q^{\textup{op}}$ and otherwise $\Aut = \aut$.
We first compute these groups in the context of acyclic cluster algebras. In this case, the combinatorics of the cluster algebra is nicely encoded in the cluster category introduced in \cite{BMRRT} and, for type $\mathbb{A}$, also in \cite{CCS}. In particular, we recall that  the Auslander-Reiten quiver of the cluster category of an acyclic cluster algebra has a particular connected component, called the transjective component.

\begin{theorem}
Let $\mathcal{A}$ be an acyclic cluster algebra and $\zG_{tr}$ the transjective component of the Auslander-Reiten quiver of the associated cluster category. Then
%\begin{itemize}
%\item[(a)] 
$\aut$ is the quotient of the group $\textup{Aut}(\zG_{tr})$ of the quiver automorphisms of $\zG_{tr}$, modulo the stabiliser $\textup{Stab}(\zG_{tr})_0$ of the points of this component. Moreover, if $\zG_{tr} \cong \mathbb{Z}\Delta$, where $\Delta$ is a tree or of type $\tilde{\mathbb A}$ then $\Aut = \aut \rtimes \ZZ_2$ and this semidirect product is not direct.
%\item[(b)]  { $\Aut=\aut \rtimes \ZZ_2$, and this semidirect product is not direct.}
%\end{itemize}
\end{theorem}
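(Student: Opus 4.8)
\emph{Part 1.} The plan is to work throughout with the cluster category $\CC$ of \cite{BMRRT} as the combinatorial model of $\mathcal{A}$, and to use the facts, available from the preceding results, that a direct cluster automorphism permutes the cluster variables compatibly with exchange and that, in the acyclic case, the transjective cluster variables are precisely the vertices of $\zG_{tr}$, exchange near these objects being recorded by the mesh (translation-quiver) structure. Consequently each automorphism of the translation quiver $\zG_{tr}$ respects the mesh relations, hence the exchange relations, and so induces a direct cluster automorphism; this gives a homomorphism $\Psi\colon\textup{Aut}(\zG_{tr})\to\aut$. I would prove $\Psi$ surjective by restricting a given direct cluster automorphism to the transjective objects, which returns an automorphism of $\zG_{tr}$ inducing it, and I would identify $\ker\Psi$ with $\textup{Stab}(\zG_{tr})_0$: an automorphism fixing every vertex fixes every transjective cluster variable, and since a cluster automorphism is determined by its action on a single cluster lying in $\zG_{tr}$ together with commutation with mutation, it induces the identity. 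This yields $\aut\cong\textup{Aut}(\zG_{tr})/\textup{Stab}(\zG_{tr})_0$.

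\emph{Part 2.} Assume now $\zG_{tr}\cong\ZZ\Delta$ with $\Delta$ a tree or of type $\tilde{\mathbb{A}}$. I would first check that $Q$ is mutation equivalent to $Q^{\textup{op}}$: for a tree any two orientations are related by a sequence of mutations at sinks and sources, and for $\tilde{\mathbb{A}}_{p,q}$ the opposite quiver is again of type $\tilde{\mathbb{A}}$ and lies in the same mutation class; by the index result stated above this already gives $[\Aut:\aut]=2$. To split the extension I would exhibit a concrete involution: $\ZZ\Delta$ carries an arrow-reversing symmetry $\rho$, obtained by reflecting the $\ZZ$-direction and composing with a suitable involutive permutation of the vertices of $\Delta$, and it satisfies $\rho^2=\textup{id}$ exactly when $\Delta$ is a tree or of type $\tilde{\mathbb{A}}$. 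Since $\rho$ reverses every arrow it realises the passage $Q\mapsto Q^{\textup{op}}$, so the induced cluster automorphism $\sigma$ lies in $\Aut\setminus\aut$ and has order two; hence $\langle\sigma\rangle\cong\ZZ_2$ meets $\aut$ trivially and, the index being two, $\Aut=\aut\rtimes\langle\sigma\rangle$. To see that this product is not direct, consider the translation $\tau\colon(n,i)\mapsto(n+1,i)$, an automorphism of the translation quiver $\zG_{tr}$ that moves every vertex and hence defines a non-trivial element of $\aut$ via $\Psi$. A direct computation gives $\rho\,\tau\,\rho^{-1}=\tau^{-1}$ (the reflection reverses the translation direction), and since $\ZZ\Delta$ is infinite in the present cases one has $\tau\neq\tau^{-1}$ already as vertex permutations, so $\sigma\tau\sigma^{-1}\neq\tau$ in $\aut$ and $\sigma$ does not centralise $\aut$.

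I expect the main obstacle to lie in Part 1, namely in proving that $\Psi$ is surjective with $\ker\Psi=\textup{Stab}(\zG_{tr})_0$: one must show that every automorphism of the abstract translation quiver $\zG_{tr}$ is compatible with the exchange relations and therefore integrates to a genuine $\ZZ$-algebra automorphism of $\mathcal{A}$, which uses the standardness of the transjective component and the fact that a cluster automorphism is pinned down by its action on one cluster. In Part 2 the delicate points are verifying that the reflection $\rho$ is genuinely realised by a cluster automorphism of order exactly two, rather than only up to a power of $\tau$, and confirming that the hypothesis on $\Delta$ is precisely what guarantees the existence of this involutive arrow-reversing symmetry of $\ZZ\Delta$.
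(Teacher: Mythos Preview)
Your proposal is correct and follows essentially the same route as the paper: the map $\Psi$ is defined via the cluster character, $\Psi(g)(x_i)=X_{g(P_i[1])}$, surjectivity uses that the image of the initial cluster under a direct automorphism has acyclic quiver $Q$ and hence corresponds to a local slice in $\Gamma_{tr}$ (this is exactly the obstacle you flagged), and the semidirect product is obtained from a reflection $\sigma$ on $\Gamma_{tr}$ satisfying $\sigma\tau\sigma=\tau^{-1}$. The only minor deviation is in the $\tilde{\mathbb{A}}$ case: rather than your uniform reflection on $\ZZ\Delta$, the paper exhibits an explicit anti-automorphism of the cyclic quiver itself and invokes the general principle that a quiver anti-automorphism induces an inverse cluster automorphism of order two; both arguments are equally valid.
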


As an easy consequence, we compute the automorphism groups of the cluster algebras of Dynkin and euclidean types.

We next consider the case of the cluster algebras arising from an oriented marked surface $(S,M)$ with $p$ punctures in the sense of Fomin, Shapiro and Thurston \cite{FST} (see also \cite{FG}). We define the marked mapping class group $\mgm$ of $(S,M)$ to be a semidirect product of a power of $\ZZ_2$ and the quotient $\mg $  of the group $\homeo$ of orientation preserving homeomorphisms from the surface $S$ to itself which map the set of marked points $M$ to itself, modulo the subgroup $\homeozero$ consisting of those $f\in \homeo$ which are isotopic to the identity via an isotopy that fixes $M$ pointwise. We then prove our second theorem.

\begin{theorem}
Let $(S,M)$ be a marked surface. Then $\mgm$ is isomorphic to a subgroup of $\aut$. Furthermore, if $(S,M)$ is a disc or an annulus without punctures, then $\mg\cong\aut$.
\end{theorem}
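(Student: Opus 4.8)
The plan is to construct an explicit group homomorphism $\mgm \to \aut$ and show it is injective, and then, in the disc/annulus case, show it is also surjective. First I would recall the dictionary of Fomin–Shapiro–Thurston: to a marked surface $(S,M)$ one associates a cluster algebra $\mathcal{A}(S,M)$ whose clusters correspond to the (tagged) triangulations of $(S,M)$, whose cluster variables correspond to the (tagged) arcs, and whose mutations correspond to flips of arcs in a triangulation. The natural source of symmetries is the action of an orientation-preserving homeomorphism $f\in\homeo$: such an $f$ sends arcs to arcs, triangulations to triangulations, and commutes with flips, because flipping is a purely topological operation and $f$ is a homeomorphism preserving $M$. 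Thus $f$ induces a permutation of the cluster variables that carries clusters to clusters and commutes with mutation, which is exactly the data of a cluster automorphism. Since homeomorphisms isotopic to the identity rel $M$ act trivially on isotopy classes of arcs, this descends to a map on the quotient $\mg=\homeo/\homeozero$. The extra power of $\ZZ_2$ in $\mgm$ accounts for the tagging at punctures (changing a tag at a puncture is a combinatorial involution commuting with flips that is not realized by a homeomorphism), and one checks it also yields cluster automorphisms and commutes appropriately, giving a homomorphism $\mgm\to\aut$.

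Next I would prove injectivity. The key point is that a homeomorphism of $(S,M)$ that fixes every isotopy class of arc must be isotopic to the identity rel $M$; this is the classical fact from surface topology (the change-of-coordinates principle, or the action of the mapping class group on the arc complex / curve complex being faithful) that underlies the definition of $\mg$ in the first place. Combined with the faithfulness of the tagging involutions on the set of tagged arcs, an element of $\mgm$ acting trivially on all cluster variables must be the identity, so the homomorphism is injective and $\mgm$ embeds as a subgroup of $\aut$.

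For the second assertion I would specialize to the case where $(S,M)$ is an unpunctured disc or an unpunctured annulus, and prove surjectivity onto $\aut$ (the absence of punctures makes $\mgm=\mg$, since there is no tagging). The strategy is to show every direct cluster automorphism arises from a homeomorphism. Here I would invoke the acyclic case already established in Theorem~1: an unpunctured disc gives a cluster algebra of Dynkin type $\mathbb{A}_n$ and an unpunctured annulus gives one of euclidean type $\tilde{\mathbb{A}}_{p,q}$, so $\aut$ is known explicitly and is controlled by the automorphisms of the transjective component $\zG_{tr}\cong\ZZ\Delta$. I would then match each generator of $\aut$ with a geometric symmetry: the $\ZZ\Delta$ translation corresponds to the Dehn twist / rotation of the annulus (and to the cyclic rotation in the disc case), and the graph automorphisms of $\Delta$ correspond to reflections of the polygon or to swapping the two boundary components of the annulus. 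Showing that these geometric homeomorphisms generate all of $\aut$, i.e. that the inclusion $\mg\hookrightarrow\aut$ is onto, is the crux.

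The main obstacle I expect is exactly this surjectivity step in the annulus case. The subtlety is that $\aut$ could a priori contain automorphisms permuting cluster variables in a way that respects the quiver combinatorics but is not induced by any homeomorphism; ruling this out requires comparing the mapping class group of the annulus (an extension of $\ZZ$ by a finite group of reflections) with the automorphism group of $\ZZ\tilde{\mathbb{A}}$ computed via Theorem~1 and checking the two match on the nose, including torsion. The disc case is easier because the mapping class group of a polygon is just the finite dihedral group and matches the automorphisms of $\ZZ\mathbb{A}_n$ directly. I would organize the final argument as a counting/generation comparison, using Theorem~1 to pin down $\aut$ and the classical description of $\homeo$ for the disc and annulus to pin down $\mg$, and then exhibit the explicit correspondence between the two presentations.
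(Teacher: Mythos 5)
Your construction is essentially the paper's: define $\phi(\fbar)(x_\tau)=x_{f(\tau)}$ on the variables of a triangulation, check it is well defined because isotopic homeomorphisms act equally on arcs, check it lands in $\aut$ because an orientation-preserving homeomorphism sends triangulations to triangulations and arcs to arcs (the paper then invokes Corollary~\ref{cor2.7bis}: a $\ZZ$-algebra automorphism carrying clusters to clusters is automatically a cluster automorphism), and prove injectivity from the fact that a mapping class acting trivially on all isotopy classes of arcs is trivial. For the unpunctured disc and annulus you then compare with the acyclic computation of Section~\ref{sect 3.3}, which is exactly the paper's proof of Theorem~\ref{theorem a}: $\aut\cong\ZZ_{n+3}$ is matched with the rotations of the polygon, and $\aut\cong H_{p,q}$ (resp. $H_{p,p}\rtimes\ZZ_2$) is matched with the two boundary rotations $r_1,r_2$ of the annulus, whose $p$-th and $q$-th powers give the Dehn twist (resp. together with the swap of the two boundary components when $p=q$). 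Your identification of surjectivity in the annulus case as the crux, to be settled by matching presentations, is also how the paper argues.

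There are, however, two places where your outline needs repair. The serious one is the claim that changing the tag at a puncture always ``yields a cluster automorphism''. When $(S,M)$ is a closed surface with exactly one puncture (e.g.\ the once-punctured torus), cluster variables are in bijection with plain arcs, not tagged arcs: every arc ends at the unique puncture, so no flip can ever change a tag (Remark~\ref{remtag}), and the tagging involution $\psi_z$ of Lemma~\ref{lemtag} simply does not exist as a map of the cluster algebra — the paper states this explicitly in the remark following that lemma. This is precisely why the paper's Theorem~\ref{theorem mg} embeds $\mgm$ only under the hypothesis $p\ge 2$ or $\partial S\ne\emptyset$, and embeds only $\mg$ in general; your blanket claim (and the introduction's loose wording) hides a step that genuinely fails for closed once-punctured surfaces. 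The smaller slip is your appeal to ``reflections of the polygon'': reflections are orientation-reversing, hence lie outside $\mg$, and on the algebra side they induce \emph{inverse} cluster automorphisms, i.e.\ elements of $\Aut\setminus\aut$. For the disc, all of $\aut\cong\ZZ_{n+3}$ is accounted for by rotations; had reflections entered the count you would obtain $D_{n+3}$ and the asserted isomorphism $\mg\cong\aut$ would break, so this confusion must be kept out of the group comparison.
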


We also consider the case of the disc with one or two punctures and show that (except for two exceptional cases) we also have $\mgm\cong\aut$.

As a consequence of our results, we show that if the cluster algebra $\mathcal{A}$ is acyclic or arising from a surface, then the group $\Aut$ is finite if and only if $\mathcal{A}$ is a cluster algebra of Dynkin type.

The paper is organised as follows. In section \ref{sect 1}, we define our notion of cluster automorphism and prove some of its elementary properties, section \ref{sect_acyclic} is devoted to the case of acyclic cluster algebras and section \ref{sect 3} to that of cluster algebras arising from  surfaces. Finally in section \ref{sect 4}, we consider the finiteness of the automorphism group.

{\bf Acknowledgements.}{The authors thank the referee for pointing out an error in an earlier version of the paper, as well as for many useful suggestions.}
\end{section}

\section{Cluster automorphisms}\label{sect 1}

\subsection{ Cluster algebras}
We recall that a {\it quiver} is a quadruple $Q=(Q_0,Q_1,s,t)$ consisting of two sets: $Q_0$ (whose elements are called \emph{points}) and $Q_1$ (whose elements are called \emph{arrows}), and of two maps $s,t:Q_1\to Q_0$ associating to each $\alpha \in Q_1$ its \emph{source} $s(\alpha)$ and its \emph{target} $t(\alpha)$, respectively. Given a point $i\in Q_0$, we denote by $i^-=\{ \alpha\in Q_1\mid t(\alpha)=i \}$ the set of arrows ending in $i$, and by $i^+=\{ \alpha\in Q_1\mid s(\alpha)=i  \}$ the set of arrows starting in $i$. 

Let now $Q$ be a connected finite quiver without oriented cycles of length one or two. Let $n=|Q_0|$ denote the number of points in $Q$, let $\amas=\{x_1,\dots,x_n\}$ be a set of $n$ variables,  and denote the points by $Q_0=\{1,\ldots,n \}$, where we agree that the point $i$ corresponds to the variable $x_i$. %{p_{x_1},\ldots,p_{x_n}\}.
We consider the field ${\cal F} = \mathbb{Q}(x_1,\dots,x_n)$ of rational functions in $x_1,\dots,x_n$, which we call the {\it ambient field}. The {\it cluster algebra} ${\cal A} = {\cal A}(\amas,Q)$ is a $\mathbb{Z}$-subalgebra of ${\cal F}$ defined by a set of generators obtained recursively from $\amas$ in the following manner. Let $i$ be such that $1\leq i \leq n.$ The {\it mutation} $\mu_{x_i,\amas}$ of $(\amas,Q)$ (or $\mu_{x_i}$ or $\mu_i$ for brevity if there is no ambiguity) is defined as follows. Firstly, $Q$ becomes a new quiver $Q^\prime$ obtained from $Q$ by: 
\begin{itemize}
\item[(a)] inserting a new arrow $k\to j$ for each path $k\to i \to j$ of length two with midpoint $i$;
\item[(b)] inverting all arrows of $Q$ passing through $i$;
\item[(c)] deleting each occurrence of a cycle of length two.
\end{itemize}
Secondly, $\amas$ becomes a new set of variables $\amas^\prime=(\amas\setminus\{x_i\}) \cup \{x_i^\prime\}$ where $x_i^\prime\in{\cal F}$ is defined by the so-called {\it exchange relation}: 
\begin{equation*}
x_i x_i^\prime = \prod_{\alpha\in i^+} x_{t(\alpha)} + \prod_{\alpha\in i^-} x_{s(\alpha)}.
\end{equation*}
Let ${\cal X}$ be the union of all possible sets of variables obtained from $\amas$ by successive mutations. Then ${\cal A} = {\cal A}(\amas, Q)$ is the $\mathbb{Z}$-subalgebra of ${\cal F}$ generated by ${\cal X}$. 

Each pair $(\tilde{\amas}, \tilde{Q})$ obtained from $(\amas, Q)$ by successive mutations is called a {\it seed}, and $\tilde{\amas}$ is called a {\it cluster}. The elements $\tilde{x}_1,\dots, \tilde{x}_n$ of a cluster $\tilde{\amas}$ are {\it cluster variables}. Each cluster is a transcendence basis for the ambient field $\mathcal{F}$. The pair $(\amas, Q)$ is the {\it initial seed} and $\amas$ is the {\it initial cluster}.

It has been shown in \cite[Theorem 3]{GSV} that for every seed $(\tilde\amas,\tilde Q)$ the quiver $\tilde Q$ is uniquely defined by the cluster $\tilde \amas$, and we   use the notation  $Q(\tilde\amas)$ for the quiver of the cluster $\tilde\amas$.
More precisely, there is a canonical bijection $p$ from the cluster $\tilde \amas$ to the set of points of the quiver $Q(\tilde\amas)$. We   write $p_x$ for the point in $Q(\tilde\amas)$ corresponding to the cluster variable $x\in \tilde\amas.$

We recall two results from the theory of cluster algebras. The so-called {Laurent phenomenon} is the fact that each cluster variable can be expressed as a Laurent polynomial in the $x_i$, with $i=1,\dots,n$, so that ${\cal A} \subseteq {\mathbb{Z}}[x_1^{\pm 1},\dots, x_n^{\pm 1}]$, see \cite{FZ1}. Also, ${\cal A}$ is {\it of finite type}, that is the set $\cal{X}$ of all cluster variables of $\cal A$ is finite, if and only if there exists a sequence of mutations transforming $Q$ into a Dynkin quiver \cite{FZ2}. In the latter case (as well as in many others), the so-called {\it positivity conjecture} holds true, that is the cluster variables are Laurent polynomials with non-negative coefficients or, equivalently, the cluster variables are contained in ${\mathbb{Z}_{\geq 0}}[x_1^{\pm 1},\dots, x_n^{\pm 1}]$, see \cite{MSW,N,AsRS}.

\subsection{Main definition}

We  define cluster automorphisms as follows.
\begin{definition}
\label{def_main}
Let $\cal A$ be a cluster algebra, %with seeds uniquely determined by their clusters. 
and let $f:{\cal A} \to {\cal A}$ be an 
  automorphism of ${\mathbb Z}$-algebras. Then $f$   is called a \emph{cluster automorphism} if there exists a seed $(\amas,Q)$ of ${\cal A}$, such that the following conditions are satisfied: 
\begin{itemize}
\item [\textup{(CA1)}] $f(\amas)$ is a cluster; \label{def1} 
\item [\textup{(CA2)}] $f$ is compatible with mutations, that is, for every $x\in \amas$, we have $$f(\mu_{x,\amas}(x)) = \mu_{f(x),f(\amas)}(f(x)).$$ \label{def2}  
\end{itemize}
\end{definition}

%%check
\begin{remark}
\label{rmk_initial}
\begin{itemize} \item[\rm(a)] As we shall see in Proposition \ref{prop1} below, if $%
\mathcal{A}$ is a cluster algebra, and $f:\mathcal{A\rightarrow A}$ is an
automorphism of $\mathbb{Z}$-algebras, then $f$ is a cluster automorphism if
and only if it satisfies properties \textup{(CA1)} and \textup{(CA2)} for \emph{every} seed $
\left( \mathbf{x}^{\prime},Q^{\prime }\right) $ of $\mathcal{A}$,
thus it sends clusters to clusters and commutes with any sequence of
mutations.
\item[\rm(b)]
  Every cluster automorphism $f$ is uniquely determined by its value on the initial cluster variables $x_1,\dots, x_n$ and thus extends in a unique way to an automorphism of the ambient field ${\cal F} = \mathbb{Q}(x_1, \dots, x_n)$ by
\begin{equation*}
\frac{p(x_1,\dots,x_n)}{q(x_1,\dots,x_n)} \mapsto \frac{p(f(x_1), \dots, f(x_n))}{q(f(x_1), \dots, f(x_n))},
\end{equation*}
for all polynomials $p,q$.
\end{itemize}
The converse, however, is not true, as shown in the following  Example.
\end{remark}

\begin{example}\label{ex 1}
\label{ex_mutation}
We   give an example of a $\mathbb Z$-automorphism of the ambient field $\cal F$ which does not restrict to a cluster automorphism of $\cal A$. Let $Q$ be the following quiver
\begin{equation*}
%\xymatrix{p_{x_1}&p_{x_2}\ar[l]&p_{x_3}\ar[l]}
\xymatrix{ 1&2\ar[l]&3\ar[l]}
%\begin{array}{ccccc}  \underset{1}{\cdot} & \longleftarrow &\underset{2}{\cdot} & \longleftarrow\underset{3}{\cdot} 
% \end{array}
\end{equation*}
and $\amas=(x_1,x_2,x_3)$. Clearly, any change of transcendence basis of ${\cal F} = {\mathbb Q}(x_1,x_2,x_3)$ induces an automorphism of $\cal F$, and such a change is induced, for instance, by a mutation. Let us define $f:{\cal F}\to {\cal F}$ by $f=\mu_{x_1}$, that is 
\begin{eqnarray*}
&&f(x_1) = \mu_{x_1}(x_1)= \frac{1+x_2}{x_1},\\
&&f(x_2) = \mu_{x_1}(x_2) = x_2,\\
&&f(x_3) = \mu_{x_1}(x_3) = x_3. 
\end{eqnarray*}
%
%As is easy to see, $f$ restricts to a $\mathbb Z$-automorphism of ${\cal A = \cal A}(\amas, Q)$.
 Then $f(\amas) = \mu_{x_1}(\amas)$ is a cluster. On the other hand, a straightforward calculation gives that 
\begin{equation*}
f\mu_{x_2, \amas}(x_2) = \frac{1+x_2+x_1x_3}{x_1x_2}
\end{equation*}
while
\begin{equation*}
\mu_{f(x_2),f(\amas)}f(x_2) = \frac{x_1+x_3+x_2x_3}{x_1x_2}.
\end{equation*}
Thus condition \textup{(CA2)} is not satisfied and $f$ is not a cluster automorphism of $\cal A$. 

The above automorphism does not even map cluster variables to cluster variables. Indeed, one of the nine cluster variables in the cluster algebra $\mathcal{A}$ has the Laurent polynomial expansion 
\[\frac{x_1 + x_1x_2 + x_3}{x_2x_3}
\]
 and applying $f$ to this cluster variable gives 

%
%\[(  \frac{1+x_2}{x_1}+  \frac{1+x_2}{x_1}x_2 + x_3)/(x_2x_3)
%\]

\[  \frac{1+2x_2+x_2^2 + x_1x_3}{x_1x_2x_3}
\]
which is not a cluster variable in $\mathcal{A}$.
 \end{example}

\subsection{Equivalent characterisations}
Since mutations induce maps of a cluster algebra onto itself, we may wonder why in the previous example we do not obtain a cluster automorphism. As we see below, the reason is that the associated quiver $Q(f(\amas))$
\begin{equation*}
\xymatrix{p_{f(x_1)}\ar[r]&p_{f(x_2)}&p_{f(x_3)}\ar[l]},
%\begin{array}{ccccc}  \underset{1}{\cdot} & \longrightarrow &\underset{2}{\cdot} & \longleftarrow\underset{3}{\cdot} 
% \end{array}
\end{equation*}
%%\paragraph{1.4} 
  is   isomorphic neither to the original quiver $Q$ nor to its opposite $Q^{\textup{op}}$.

When we say that a sequence of mutations $\mu$ transforms a quiver $Q$ into itself, we mean that for any $\alpha \in Q_1$ we have $\mu(s(\alpha))=s(\mu(\alpha))$ and $\mu(t(\alpha))=t(\mu(\alpha))$. When we say that $\mu$ transforms $Q$ into $Q^{op}$, we mean that for any $\alpha\in Q_1$ we have $\mu(s(\alpha))=t(\mu(\alpha))$ and $\mu(t(\alpha))=s(\mu(\alpha))$.

\begin{lemma}\label{lemma1} 
Let $f$ be a $\ZZ$-algebra automorphism of $\mathcal{A}$. Then $f$ is a cluster automorphism if and only if there exists a seed $(\amas,Q)$ such that $f(\amas)$ is a cluster and one of the following two conditions is satisfied:
\begin{itemize}
\item[(a)] there exists an isomorphism of quivers
$\varphi : Q\longrightarrow Q(f(\amas))$ such that $\varphi(p_{x})=p_{f(x)}$ for all $p_{x}\in Q_0$, or
\item[(b)] there exists an isomorphism of quivers
$\varphi : Q^{\textup{op}}\longrightarrow Q(f(\amas))$ such that $\varphi(p_{x})=p_{f(x)}$ for all $p_x\in Q_0$.
\end{itemize}

\end{lemma}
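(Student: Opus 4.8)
The plan is to fix a seed $(\amas,Q)$ for which $f(\amas)$ is a cluster, i.e.\ for which \textup{(CA1)} holds, and to prove that for such a seed \textup{(CA2)} is equivalent to the disjunction of (a) and (b); the statement of the lemma then follows. Everything rests on comparing two binomials coming from exchange relations, together with the fact that a cluster is a transcendence basis. Write $\amas=\{x_1,\dots,x_n\}$ and identify $p_{x_i}$ with the point $i$. Applying the $\ZZ$-algebra automorphism $f$ to the exchange relation at $x_i$ yields
\[
f(x_i)\,f(\mu_{x_i}(x_i))=\prod_{\alpha\in i^+}f(x_{t(\alpha)})+\prod_{\alpha\in i^-}f(x_{s(\alpha)}),
\]
since $f$ is multiplicative and additive, while the exchange relation for mutation at the point $p_{f(x_i)}$ of the seed $(f(\amas),Q(f(\amas)))$ expresses $f(x_i)\,\mu_{f(x_i),f(\amas)}(f(x_i))$ as the product of the $f(x_j)$ over the arrows leaving $p_{f(x_i)}$ plus the product over the arrows entering $p_{f(x_i)}$.

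Next I would record the key fact: because $f(\amas)$ is a cluster it is a transcendence basis of $\mathcal F$, so $f(x_1),\dots,f(x_n)$ are algebraically independent and distinct monomials in them are linearly independent over $\mathbb Q$. As $f(x_i)\neq 0$, condition \textup{(CA2)} at $x_i$ is equivalent, after cancelling $f(x_i)$, to the equality of the two binomials above. Since each side is a sum of exactly two monomials, matching them forces one of two local alternatives at $i$. Writing $a_{ij}$ (resp.\ $b_{ij}$) for the number of arrows $i\to j$ in $Q$ (resp.\ from $p_{f(x_i)}$ to $p_{f(x_j)}$ in $Q(f(\amas))$), these read: either $a_{ij}=b_{ij}$ and $a_{ji}=b_{ji}$ for all $j$ (call $i$ of type~(a)), or $a_{ij}=b_{ji}$ and $a_{ji}=b_{ij}$ for all $j$ (type~(b)).

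The heart of the argument, and the step I expect to be the main obstacle, is to upgrade these per-vertex alternatives to a single global one. First, at a non-isolated vertex only one type can hold: if both held then $a_{ij}=a_{ji}$ for all $j$, and the absence of oriented $2$-cycles in $Q$ ($a_{ij}a_{ji}=0$) would force $i$ to be isolated. Thus, as $Q$ is connected with $n\ge 2$ (the case $n=1$ being trivial, since then $Q=Q^{\textup{op}}$), every vertex carries a well-defined type. Second, along an arrow $i\to j$ (so $a_{ij}>0$, $a_{ji}=0$) the two endpoints must share their type: were $i$ of type~(a) and $j$ of type~(b), we would get $b_{ij}=a_{ij}>0$ from $i$ but $b_{ij}=a_{ji}=0$ from $j$, a contradiction, and the reversed mixing is symmetric. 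Connectedness then makes the type constant. Type~(a) everywhere says $a_{ij}=b_{ij}$ for all $i,j$, i.e.\ the vertex bijection $\varphi\colon p_x\mapsto p_{f(x)}$ is an isomorphism $Q\to Q(f(\amas))$, which is (a); type~(b) everywhere gives an isomorphism $Q^{\textup{op}}\to Q(f(\amas))$, which is (b).

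For the converse I would run the monomial comparison backwards. Assuming (a), the two binomials coincide term by term (out with out, in with in); assuming (b) they coincide after interchanging the two summands, which is harmless as addition is commutative. In either case the two exchange relations agree, and cancelling $f(x_i)$ gives \textup{(CA2)} for every $x_i\in\amas$, completing the equivalence.
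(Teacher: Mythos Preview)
Your proposal is correct and follows essentially the same approach as the paper: compare the image under $f$ of the exchange relation at $x_i$ with the exchange relation at $f(x_i)$ in the seed $(f(\amas),Q(f(\amas)))$, use that $f(\amas)$ is a transcendence basis to match the two monomials on each side, obtain a local dichotomy at each vertex, and propagate it globally via connectedness. Your write-up is in fact slightly more explicit than the paper's in handling the edge cases (isolated vertices, $n=1$) and in phrasing the propagation step as ``types are constant along arrows'', but the underlying argument is the same.
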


\begin{proof} Consider a cluster $\amas$ in $\mathcal{A}$. For a cluster variable $x_i$ in $\amas$, corresponding, say, to the point $i\in Q_0$, the exchange relation reads: 
\begin{equation*}
\mu_{x_i,\amas}(x_i) = \frac{1}{x_i} \left( \prod_{\alpha \in i^+ }  x_{t(\alpha)} + \prod_{\alpha \in i^-} x_{s(\alpha)}    \right).
\end{equation*}
Since $f$ is an algebra homomorphism, this implies
\begin{equation}
f(\mu_{x_i,\amas}(x_i)) =  \frac{1}{f(x_i)}\left(\prod_{\alpha \in  i^+ }  f(x_{t(\alpha)}) + \prod_{\alpha \in i^-} f(x_{s(\alpha)})    \right).
\label{temp1}
\end{equation}
On the other hand, $f$ induces a map $\varphi :Q_0 \to Q(f(\amas))_0$ defined by $\varphi(p_{x})=p_{f(x)}$.
%indeed, by (CA1) we have $f(x_i)=x_{i^\prime}$ for some $i^\prime\in Q_0^\prime$; we thus set $i^\prime = f(i)$. 
Hence 
\begin{equation}
\mu_{f(x_i),f(\amas)} (f(x_i)) 
 = \frac{1}{f(x_i)}\left( \prod_{\beta \in \varphi(i)^+ }  f(\amas)_{t(\beta)} + \prod_{ \beta \in \varphi(i)^-}  f(\amas)_{s(\beta)}    \right) ,
\label{temp2}
\end{equation}
where we denote by $f(\amas)_j$ the cluster variable in the cluster $f(\amas)$ which corresponds to 
the point $j\in Q(f(\amas))_0$. Then $f$ is a cluster automorphism if and only if the expressions (\ref{temp1}) and (\ref{temp2}) coincide for every $i$. This is the case if and only if we have one of the following two situations: either
\begin{itemize}
\item [(i)]
\begin{equation*}
\prod_{\alpha \in i^+}  f(x_{t(\alpha)}) = \prod_{ \beta\in \varphi(i)^+ }  f(\amas)_{t(\beta)}  \qquad \mbox{and} \qquad
\prod_{ \alpha\in i^-}  f(x_{s(\alpha)}) = \prod_{ \beta \in \varphi(i)^- }  f(\amas)_{s({\beta)}};
\end{equation*}

or
\item [(ii)]
\begin{equation*}
\prod_{ \alpha \in i^+}  f(x_{t(\alpha)}) =\prod_{ \beta \in \varphi(i)^- }  f(\amas)_{s({\beta)}}  \qquad \mbox{and} \qquad
\prod_{ \alpha\in i^-}  f(x_{s(\alpha)})  = \prod_{ \beta\in \varphi(i)^+ }  f(\amas)_{t(\beta)}.
\end{equation*}
\end{itemize}

Since the set $f\left( \mathbf{x}\right) $ is a transcendance basis of the
ambient field $\mathcal{F}$, this implies that we have either

\begin{itemize}
\item[(i)] $i^{+}=\varphi (i)^{+}$ and $i^{-}=\varphi (i)^{-}$\\ 
or
\item[(ii)] $i^{+}=\varphi (i)^{-}$\bigskip and $i^{-}=\varphi (i)^{+}\\ $
for every $i=1,\dots,n.$
\end{itemize}

Let us now prove that if one of the two situations (i) or (ii) holds for
the point $i$, then the same situation holds for every point of $Q$.

Suppose that we are in the situation (i) for the point $i$, and let $j$ be a neighbour of $i$.
Without loss of generality, we can assume that there is an arrow $\alpha
:j\rightarrow i$ in the quiver $Q$, that is, $\alpha \in i^{-}$. This
implies that there is no arrow from $i$ to $j$ because $Q$ has no cycles of
length two. From the bijection between $i^{-}$ and $\varphi \left(
i^{-}\right) $, we get an arrow $\varphi \left( \alpha \right) :\varphi
\left( j\right) \rightarrow \varphi (i),$ and so $\varphi \left(\alpha \right)
\in \varphi (j)^{+}$. Since $\alpha \in j^{+}$, this implies that there is a
bijection $j^{+}\cong \varphi \left( j\right) ^{+}$. The proof is entirely
similar if we choose $\alpha$ in $i^{+}.$ Proceeding in this way from neighbour
to neighbour, we see that the map $\varphi $ between points extends to an
isomorphism of quivers $Q\cong Q(f(x)).$

Analogously, the situation (ii) yields an isomorphism of quivers $%
Q^{op}\rightarrow Q(f(\mathbf{x}))$.

%
%
%Now let $j$ be a neighbour of $i$ in $Q$. Without loss of generality, we can assume that there is an arrow $\alpha: j \to i$ in the quiver $Q$, that is $\za \in i^-.$ Suppose we are in the situation (i), then there is an arrow $\varphi(\alpha):\varphi(j) \to \varphi(i)$ and so $\varphi(\za) \in \varphi(i)^-$. %and $f(i) \in f(j)^+.$ 
%Since $\za\in i^-$, it follows that there is a bijection $j^+ \cong \varphi(j)^+.$ Proceeding in this way from neighbour to neighbour, we see that $\varphi:Q \cong Q(f(\amas))$ is  an isomorphism.

%Analogously, the situation (ii) implies that  $\varphi:Q^{\textup{op}} \cong Q(f(\amas))$ is  an isomorphism.
 \end{proof}

In the sequel, we shall mostly need quiver isomorphisms satisfying one of
the conditions of Lemma \ref{lemma1}. Accordingly, if $f$ is a cluster automorphism
of the cluster algebra $\mathcal{A}\left( \amas,Q\right) $, then $f$
induces a map (actually a bijection) between the points $\varphi
:Q_{0}\rightarrow Q\left( f\left( \amas \right) \right) _{0}$ by $
\varphi \left( p_{x}\right) =p_{f\left( x\right) }$ for every $x\in 
\amas$. If this bijection $\varphi $ extends to an isomorphism of
quivers $\varphi :Q\rightarrow Q\left( f\left( \amas \right) \right) 
$ then we say that the latter is \emph{induced} by $f$, and that $f$ is a 
\emph{direct cluster automorphism}. Similarly, if $\varphi $ extends to an
isomorphism of quivers $\varphi :Q^{op}\rightarrow Q\left( f\left( 
\amas\right) \right) $ then we also say that $\varphi $ is \emph{induced} by $f$ but then $f$ is called an \emph{inverse cluster
automorphism}.

\begin{proposition}\label{prop1}
Let $f$ be a cluster automorphism. Then $f$ satisfies conditions \textup{(CA1)} and \textup{(CA2)}  for \emph{every} seed.
\end{proposition}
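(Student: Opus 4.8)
The plan is to reduce the statement to a single-step stability property via Lemma~\ref{lemma1} and then to propagate it along mutations. By Definition~\ref{def_main}, $f$ is a cluster automorphism, so there is at least one seed $(\amas,Q)$ for which \textup{(CA1)} and \textup{(CA2)} hold; by Lemma~\ref{lemma1} this is equivalent to the existence of a quiver isomorphism $\varphi\colon Q\to Q(f(\amas))$ (direct case) or $\varphi\colon Q^{\textup{op}}\to Q(f(\amas))$ (inverse case) with $\varphi(p_x)=p_{f(x)}$ for all $x\in\amas$. Call this property $P(\amas,Q)$. Since each mutation is an involution, all seeds of $\mathcal A$ are mutually reachable by finite sequences of mutations, so it suffices to prove that $P$ is preserved under one mutation: if $P(\amas,Q)$ holds and $z\in\amas$, then $P$ holds for the mutated seed $(\mu_{z,\amas}\amas,\,Q(\mu_{z,\amas}\amas))$. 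An induction on the length of a mutation sequence then yields $P$, hence \textup{(CA1)} and \textup{(CA2)}, for every seed.

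For the inductive step write $\amas^\ast=\mu_{z,\amas}(\amas)=(\amas\setminus\{z\})\cup\{z'\}$ with $z'=\mu_{z,\amas}(z)$. Applying $f$ and invoking \textup{(CA2)} for the seed $(\amas,Q)$ gives $f(z')=\mu_{f(z),f(\amas)}(f(z))$, whence
\[
f(\amas^\ast)=\bigl(f(\amas)\setminus\{f(z)\}\bigr)\cup\{\mu_{f(z),f(\amas)}(f(z))\}=\mu_{f(z),f(\amas)}\bigl(f(\amas)\bigr).
\]
As $f(\amas)$ is a cluster, so is its mutation at $f(z)$, which establishes \textup{(CA1)} for the new seed. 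Moreover, by the result of \cite[Theorem~3]{GSV} recalled above, the quiver of a mutated cluster is the mutated quiver, so $Q(\mu_{z,\amas}\amas)=\mu_{p_z}(Q)$ and $Q(f(\amas^\ast))=\mu_{p_{f(z)}}\bigl(Q(f(\amas))\bigr)$.

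The remaining task is the quiver part, which rests on the fact that quiver mutation is natural with respect to isomorphisms: an isomorphism $\psi\colon R\to R'$ with $\psi(v)=v'$ induces an isomorphism $\mu_v(R)\to\mu_{v'}(R')$ agreeing with $\psi$ on vertices, immediate from the purely combinatorial rule (a)--(c) since vertices are unchanged by mutation. In the direct case, applying this to $\varphi$ with $v=p_z$, $v'=\varphi(p_z)=p_{f(z)}$ produces an isomorphism $Q(\mu_{z,\amas}\amas)\to Q(f(\amas^\ast))$ still given on vertices by $p_x\mapsto p_{f(x)}$: for the unchanged variables this is $\varphi$ itself, and at the mutated vertex the identity $f(z')=\mu_{f(z),f(\amas)}(f(z))$ ensures the point labelled $z'$ corresponds to the point labelled $f(z')$. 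This is condition~(a) of Lemma~\ref{lemma1} for the new seed. The inverse case is identical once one notes that mutation commutes with passing to the opposite quiver, $\mu_v(R^{\textup{op}})=(\mu_v R)^{\textup{op}}$, checked directly on (a)--(c); then $\varphi$ mutates to an isomorphism $(\mu_z Q)^{\textup{op}}\to Q(f(\amas^\ast))$, giving condition~(b).

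I expect the main obstacle to be the bookkeeping in the quiver part, namely verifying that the isomorphism produced by naturality of mutation remains exactly the vertex map $p_x\mapsto p_{f(x)}$ on the new cluster, especially at the mutated vertex. There the identity $f(z')=\mu_{f(z),f(\amas)}(f(z))$ furnished by \textup{(CA2)} is precisely what matches the relabelling $z\mapsto z'$ on one side with $f(z)\mapsto f(z')$ on the other. Everything else is formal: connectedness of the exchange graph reduces the global assertion to this local step, and Lemma~\ref{lemma1} translates the quiver condition back into \textup{(CA1)} and \textup{(CA2)}.
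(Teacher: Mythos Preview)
Your proof is correct and follows essentially the same strategy as the paper's: reduce to a single mutation step, use \textup{(CA2)} on the base seed to show $f(\amas^\ast)=\mu_{f(z)}f(\amas)$ is a cluster, then propagate the quiver isomorphism of Lemma~\ref{lemma1} through the mutation (using $\mu_v(R^{\textup{op}})=(\mu_vR)^{\textup{op}}$ in the inverse case) to recover \textup{(CA2)} for the new seed. Your explicit framing in terms of naturality of mutation with respect to quiver isomorphisms and your bookkeeping at the mutated vertex are a bit more spelled out than in the paper, but the argument is the same.
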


\begin{proof}
Any seed is obtained from the seed $(\amas,Q)$ of Definition \ref{def_main} by a finite sequence of mutations. It is therefore enough to show that if (CA1), (CA2) hold for a seed $(\amas,Q)$, then they hold for any seed $(\amas',Q')$ that is obtained from $(\amas,Q)$ by a single mutation. 
Let $(\amas',Q')$ be such a seed.
Then $(\amas',Q')=\mu_{x,\amas}(\amas,Q)$, for some $x\in \amas$, thus
\[\amas'=\left(\amas \setminus\{x\}\right)\cup\{x'\},\]
with  the exchange relation
 $$x'=\frac{1}{x}\left(\prod_{p_{x}\to p_{x_i} \in Q_1} x_i +\prod_{p_{x}\ot p_{x_j} \in Q_1} x_j \right).$$

It follows that $f(\amas') =\left( f(\amas) \setminus\{f(x)\}\right)\cup\{f(x')\}.$ By (CA1), $f(\amas)$ is a cluster and by (CA2)
\[f(x')=f(\mu_{x,\amas}(x))= \mu_{f(x),f(\amas)}(f(x)).\]
Therefore $f(\amas')=\mu_{f(x),f(\amas)}(f(\amas))$; in particular, $f(\amas')$ is a cluster. This shows (CA1).

Let us show that condition (a) or (b) of Lemma \ref{lemma1} is satisfied for the seed $(\amas',Q')$.
We have $Q'=\mu_x Q$. 
On the other hand,
\[Q(f(\amas'))=Q(f(\mu_{x,\amas}(\amas))) = Q(\mu_{f(x),f(\amas)}f(\amas)) 
= \mu_{f(x)}(Q(f(\amas))),\]
where the second equality follows from the condition (CA2) for the seed $(\amas,Q)$.
Now, one of the conditions (a) or (b) holds for the seed $(\amas,Q)$, thus, if  (a) holds, then there is an isomorphism 
$\varphi:Q\to Q(f(\amas))$ induced by $f$, that is, such that $\varphi(p_{x})=p_{f(x)}$, for every $x\in \amas$, and therefore
\[Q'=\mu_x Q\cong \mu_{f(x)}Q(f(\amas)) = Q(f(\amas')),\]
and this isomorphism sends every point $p_{x_i'}$ in $Q'$ to the point $p_{f(x_i')}$ in $Q(f(\amas'))$. In other words, the condition (a) holds for the seed $(\amas',Q')$.
On the other hand, if condition (b) holds for $(\amas,Q)$, then there is an isomorphism 
$\varphi:Q^{\textup{op}}\to Q(f(\amas))$ induced by $f$, that is, such that $\varphi(p_{x})=p_{f(x)}$, for every $x\in \amas$, and therefore
\[Q'^{\textup{op}}=\mu_x Q^{\textup{op}}\cong \mu_{f(x)}Q(f(\amas)) = Q(f(\amas')),\]
and this isomorphism sends any point $p_{x_i'}$ in $Q'^{\textup{op}}$ to the point $p_{f(x_i')}$ in $Q(f(\amas'))$; thus   condition (b) holds for the seed $(\amas',Q')$. 

Therefore since the structure of $f(Q^\prime)$ coincides with that of $Q^\prime$ or $Q'^{\textup{op}}$, the expressions analogous to (\ref{temp1}) and (\ref{temp2}) for the cluster $\amas'$ are equal, thus (CA2) is satisfied for the
cluster $f\left( \mathbf{x}^{\prime }\right).$ 
\end{proof}

\begin{corollary}
\label{lemma_property}
Let ${\cal A=A}(\amas,Q)$ and  $f:\cal A\to A$ be a cluster automorphism. Then
\begin{itemize}
\item[(a)] If $f$ is direct, then it induces a quiver isomorphism $Q'\cong Q(f(\amas'))$, for any seed $(\amas',Q')$. 
\item[(b)] If $f$ is inverse, then it induces a quiver isomorphism $Q'^{op}\cong Q(f(\amas'))$, for any seed $(\amas',Q')$. 
\end{itemize} \end{corollary}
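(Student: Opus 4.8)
The plan is to obtain the corollary as an immediate consequence of the induction already carried out in the proof of Proposition \ref{prop1}. First I would recall the translation of terminology: by definition $f$ is \emph{direct} precisely when condition (a) of Lemma \ref{lemma1} holds for the initial seed $(\amas,Q)$, and \emph{inverse} precisely when condition (b) holds for that seed. Moreover, for an arbitrary seed $(\tilde\amas,\tilde Q)$, saying that condition (a) (resp. (b)) holds is, by Lemma \ref{lemma1}, exactly the assertion that the induced map $\varphi(p_x)=p_{f(x)}$ extends to a quiver isomorphism $\tilde Q\cong Q(f(\tilde\amas))$ (resp. $\tilde Q^{op}\cong Q(f(\tilde\amas))$). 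So the statement of the corollary is nothing but the claim that the \emph{type} of $f$, once fixed at the initial seed, is the same at every seed.

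Next I would extract from the proof of Proposition \ref{prop1} the slightly sharper fact it actually establishes: the type is \emph{preserved} under a single mutation. There the two cases are treated separately, using that mutation commutes with the induced isomorphism, i.e. $\mu_x Q\cong \mu_{f(x)}Q(f(\amas))$ when (a) holds and $\mu_x Q^{op}\cong \mu_{f(x)}Q(f(\amas))$ when (b) holds. Thus if (a) holds for $(\amas,Q)$ then (a) holds for $\mu_x(\amas,Q)$, and if (b) holds then (b) holds after mutation; one never passes from (a) to (b) or back. This is the only point that needs care, and it is the conceptual heart of the corollary: it confirms that the label direct/inverse is a genuine invariant of $f$ and cannot flip along a mutation path.

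Finally, since every seed $(\amas',Q')$ is reached from the initial seed by a finite sequence of mutations, I would conclude by induction on the length of such a sequence. If $f$ is direct, then condition (a) holds at the initial seed and is propagated unchanged at each step, so (a) holds at $(\amas',Q')$, which by Lemma \ref{lemma1} yields the induced isomorphism $Q'\cong Q(f(\amas'))$. The inverse case is identical, with (b) and $Q'^{op}$ in place of (a) and $Q'$. Because the substantive work lives in Proposition \ref{prop1}, I expect no real obstacle here; the task is essentially bookkeeping, with the single subtle point being the invariance of the type asserted above.
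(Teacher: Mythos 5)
Your proposal is correct and follows essentially the same route as the paper: the paper's own proof simply cites the proof of Proposition \ref{prop1}, whose one-step argument shows precisely that condition (a) (resp.\ (b)) of Lemma \ref{lemma1} propagates unchanged under a single mutation, and the corollary then follows by the same induction over a mutation sequence connecting $(\amas,Q)$ to $(\amas',Q')$. Your explicit emphasis that the direct/inverse label never flips along the way is exactly the content the paper leaves implicit.
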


\begin{proof} This follows from the proof of  Proposition \ref{prop1} and (CA2).  \end{proof}

\begin{remark}
\label{rmk_seeds}
 It follows from (CA1) that a cluster automorphism amounts to a replacement of a cluster by another cluster of the same algebra. Since seeds are uniquely determined  by clusters, a cluster automorphism can equivalently be considered as a ``change of seed". Condition (CA2) says that this change is compatible with mutations so that if $(\amas,Q)$ is a seed of $\cal A$ and $x\in \amas$, then we have the following commutative diagram:
\begin{equation*}
\xymatrix@R35pt@C75pt{  (\amas,Q) \ar[r]^{{f}}\ar[d]^{\mu_{(x,\amas)}} & (f(\amas),Q(f(\amas))\ar[d]^{ \mu_{(f(x),f(\amas))} } \\
(\mu_{x,\amas}(\amas), \mu_{x,\amas}(Q))\ar[r]^{f} & (f\mu_{x,\amas}(\amas), Q(f\mu_{x,\amas}(\amas)) .}
\end{equation*}
\end{remark}

{ We end this subsection with one more characterisation of cluster automorphisms.
\begin{corollary}\label{cor2.7bis}
Let $\mathcal{A}=\mathcal{A}\left(
Q\right)$ and let $f:\mathcal{A}\rightarrow \mathcal{A}$  be
a  $\ZZ$-algebra automorphism. Then $f$ is a cluster automorphism if and only if $f$ maps each cluster to a cluster. 
\end{corollary}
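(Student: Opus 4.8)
The plan is to prove the non-trivial direction of Corollary \ref{cor2.7bis}, namely that a $\ZZ$-algebra automorphism $f$ which maps every cluster to a cluster is automatically a cluster automorphism. The converse is immediate from (CA1) together with Proposition \ref{prop1}, which guarantees that a cluster automorphism sends clusters to clusters for \emph{every} seed; so the content lies in showing that the seemingly weaker hypothesis ``$f$ maps each cluster to a cluster'' already forces compatibility with mutations, i.e. condition (CA2). By Lemma \ref{lemma1}, it suffices to exhibit a single seed $(\amas,Q)$ for which $f(\amas)$ is a cluster (automatic by hypothesis) and for which the induced bijection $\varphi(p_x)=p_{f(x)}$ extends to a quiver isomorphism $Q\to Q(f(\amas))$ or $Q^{\textup{op}}\to Q(f(\amas))$.

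First I would fix the initial seed $(\amas,Q)$ and consider, for each $i$, the cluster $\amas$ together with the cluster $\mu_{x_i,\amas}(\amas)$ obtained by a single mutation at $x_i$. The key observation is that these two clusters share all variables except one, and the exchanged variable $\mu_{x_i,\amas}(x_i)$ is determined by the exchange relation, which encodes exactly the arrows incident to the point $i$ in $Q$. By hypothesis $f(\amas)$ is a cluster and $f(\mu_{x_i,\amas}(\amas))$ is a cluster; moreover these two image clusters again differ in precisely one variable, namely $f(x_i)$ is replaced by $f(\mu_{x_i,\amas}(x_i))$. Since in the image cluster $f(\amas)$ the element $f(x_i)$ can be exchanged in exactly one way (the mutation $\mu_{f(x_i),f(\amas)}$), and this produces the unique cluster sharing the remaining $n-1$ variables $f(x_j)$, $j\neq i$, with $f(\amas)$, I would argue that $f(\mu_{x_i,\amas}(\amas))$ must coincide with $\mu_{f(x_i),f(\amas)}(f(\amas))$ as a set. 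Because these two clusters agree on the $n-1$ unchanged variables, their single differing variables must be equal, giving
\[
f(\mu_{x_i,\amas}(x_i)) = \mu_{f(x_i),f(\amas)}(f(x_i)),
\]
which is exactly (CA2) for the variable $x_i$. Ranging over all $i$ yields (CA2) for the whole initial seed, and then Proposition \ref{prop1} propagates it to every seed.

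The main obstacle I expect is justifying the uniqueness step: that a cluster is determined as a set by knowing $n-1$ of its variables together with the ambient cluster it mutates from. This relies on the fact that each cluster is a transcendence basis of $\mathcal{F}$ and that, by \cite[Theorem 3]{GSV}, the quiver $\tilde Q$ is uniquely determined by the cluster $\tilde\amas$; hence the one-step neighbours of a given cluster are in bijection with its variables, with no two distinct neighbours sharing the same $n-1$ retained variables. I would need to invoke this together with the injectivity of $f$ (being an automorphism) to rule out the degenerate possibility that the two candidate clusters differ in more than one variable or that the exchanged variable lands outside the expected position. Once uniqueness of the mutated cluster is secured, the equality of the exchanged variables is forced purely set-theoretically, and the translation back into (CA2) via Lemma \ref{lemma1} is routine. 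A secondary point worth a sentence is that the induced map $\varphi$ is indeed a bijection on points: this follows because $f$ is an automorphism and $f(\amas)$ is a genuine cluster of the same cardinality $n$.
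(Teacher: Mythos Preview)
Your argument is correct and is essentially the paper's own proof: both observe that $f(\amas)$ and $f(\mu_{x_i}\amas)$ are clusters differing only in the slot $f(x_i)$, so the latter must equal $\mu_{f(x_i)}f(\amas)$, forcing $f(\mu_{x_i}(x_i))=\mu_{f(x_i)}(f(x_i))$. The paper states the uniqueness step in one line (``these two clusters are obtained from $f(\amas)$ by mutating in the same variable''), whereas you spell out why two clusters sharing $n-1$ variables must be mutation-neighbours, invoking \cite{GSV}; but the skeleton is identical.
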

\begin{proof} Assume that for every cluster $ \mathbf{x} $, $f(\mathbf{%
x)}$ is a cluster. We must prove that for every $x\in \mathbf{x}$ we have
a commutative diagram as in the remark above. Let $x^{\prime }$ be the
variable obtained from $x$ by mutation, then $\mathbf{x}^{\prime }=(\mathbf{x}\setminus \left\{ x\right\} )\cup \left\{ x^{\prime
}\right\} $ is a cluster. Because of our hypothesis, 
 $f(\mathbf{x}^{\prime })=(f(\mathbf{x)}
\setminus\left\{ f(x)\right\} )\cup \left\{ f(x^{\prime
})\right\} $ is a cluster as well. On the other hand, mutating in $f(x)$ the
cluster $f(\mathbf{x)}$ yields the cluster $(f(\mathbf{x)}\setminus\left\{ f(x)\right\} )\cup \left\{ y^{\prime }\right\}$. These two clusters are obtained from $f(\mathbf{x)}$ by mutating in the same
variable $f(x)$ therefore $y^{\prime }=f(x^{\prime }).$ 
\end{proof}
}

As a consequence, we see that the notion of direct cluster automorphism
coincides with that of strong automorphism of a cluster algebra \cite{FZ2}, that is, an isomorphism of $\mathbb{Z}$-algebras that maps every seed to an isomorphic seed.
 
\subsection{The group of cluster automorphisms}
Examples and construction techniques for cluster automorphisms are given below. Clearly, the identity on $\cal A$ is a cluster automorphism. In fact, the following lemma holds.

\begin{lemma}
The set $\Aut$ of all cluster automorphisms of $\cal A$ is a group under composition. 
\end{lemma}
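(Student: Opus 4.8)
The plan is to verify the three group axioms directly, using the characterisation of cluster automorphisms provided by Corollary \ref{cor2.7bis}, namely that a $\ZZ$-algebra automorphism of $\mathcal{A}$ is a cluster automorphism if and only if it maps every cluster to a cluster. This characterisation is convenient because it replaces the two conditions (CA1) and (CA2) by the single, composition-friendly condition ``maps clusters to clusters'', and the set of $\ZZ$-algebra automorphisms of $\mathcal{A}$ is already a group under composition. Thus it suffices to check that the cluster automorphisms form a subgroup of that ambient group.

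First I would record closure under composition. Let $f,g$ be cluster automorphisms. Their composite $g\circ f$ is certainly a $\ZZ$-algebra automorphism of $\mathcal{A}$. Given any cluster $\amas$, the image $f(\amas)$ is a cluster by Corollary \ref{cor2.7bis} applied to $f$, and then $g(f(\amas))$ is a cluster by the same corollary applied to $g$. Hence $g\circ f$ sends every cluster to a cluster, so by Corollary \ref{cor2.7bis} it is a cluster automorphism. The identity map is manifestly a $\ZZ$-algebra automorphism sending each cluster to itself, hence is a cluster automorphism, giving the unit.

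The step I expect to be the main obstacle is closure under taking inverses, since here one must argue in the reverse direction. Let $f$ be a cluster automorphism, so $f^{-1}$ exists as a $\ZZ$-algebra automorphism of $\mathcal{A}$; the task is to show $f^{-1}$ also maps clusters to clusters. The key observation is that $f$ not merely maps clusters into the set of clusters but permutes the clusters \emph{bijectively}. Indeed, by Proposition \ref{prop1} the cluster automorphism $f$ satisfies (CA1) and (CA2) for every seed, and by Remark \ref{rmk_seeds} a cluster automorphism amounts to a replacement of a cluster by another cluster, compatibly with mutations; since mutations act transitively on the set of seeds and $f$ intertwines mutation (the commutative diagram of Remark \ref{rmk_seeds}), the induced self-map of the set of clusters commutes with all mutations. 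A self-map of the mutation graph of clusters that is a graph endomorphism and is induced by an algebra automorphism is injective, because $f$ is injective on $\mathcal{F}$ and clusters are transcendence bases; and it is surjective because every cluster is reachable from the initial one by a sequence of mutations, whose $f$-image is again a sequence of mutations landing in the image. Therefore $f$ restricts to a bijection of the set of all clusters onto itself.

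Granting this bijectivity, the inverse is immediate: for any cluster $\amas$ there is, by surjectivity of the induced permutation, a cluster $\mathbf{y}$ with $f(\mathbf{y})=\amas$, whence $f^{-1}(\amas)=\mathbf{y}$ is a cluster. So $f^{-1}$ maps every cluster to a cluster and is a cluster automorphism by Corollary \ref{cor2.7bis}. Together with closure and the identity, this shows $\Aut$ is a subgroup of the group of $\ZZ$-algebra automorphisms of $\mathcal{A}$, hence a group under composition. An alternative, more elementary route to bijectivity that avoids the mutation-graph language would be to observe that $f$ is a finite-order-free automorphism only in special cases, so instead I would lean on the direct argument: surjectivity of $f$ on clusters follows because, writing any target cluster as $\mu_{i_k}\cdots\mu_{i_1}(\amas_0)$ applied to the initial cluster and using (CA2) repeatedly, its $f^{-1}$-preimage is obtained by applying the same mutation word to $f^{-1}(\amas_0)$, which is a cluster provided $f^{-1}(\amas_0)$ is; reducing thereby to the single claim that $f^{-1}$ sends the initial cluster to a cluster, which in turn follows from the fact that $f$ maps the (finite or infinite) collection of clusters onto itself as established above.
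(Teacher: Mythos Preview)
Your approach is correct in outline but longer than necessary, and one step is imprecisely argued. You work via Corollary~\ref{cor2.7bis}, which makes composition and the identity immediate but forces you, for the inverse, to show that $f$ is \emph{surjective} on the set of clusters. The sentence you offer for this (``every cluster is reachable from the initial one by a sequence of mutations, whose $f$-image is again a sequence of mutations landing in the image'') does not establish surjectivity: it only restates that $f$ carries mutation paths to mutation paths. The clean argument, using the same ingredients, is that the image of $f$ on clusters contains $f(\amas)$ and is closed under mutation in every direction (by (CA2) and Proposition~\ref{prop1}); since the exchange graph is connected, the image is all clusters. Your final ``alternative route'' paragraph is circular (it ultimately appeals to the surjectivity it purports to replace) and should be deleted.

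By contrast, the paper's proof is one sentence, invoking Remark~\ref{rmk_seeds} (equivalently Lemma~\ref{lemma1}): a cluster automorphism amounts to replacing a seed $(\amas,Q)$ by a seed whose quiver is isomorphic to $Q$ or to $Q^{\textup{op}}$, and this description is manifestly stable under composition and inversion---for $f^{-1}$ one applies the criterion at the seed $(f(\amas),Q(f(\amas)))$, noting $f^{-1}(f(\amas))=\amas$ and reversing the quiver isomorphism, with no surjectivity argument needed. Your route buys a composition-friendly criterion; the paper's makes the inverse trivial as well.
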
 

\begin{proof} Let $f,g \in \Aut$. By Remark \ref{rmk_seeds}, a cluster automorphism amounts to replacing the initial seed $\left( \mathbf{x},Q\right) 
$ by another seed whose quiver is isomorphic to either $Q$ or $Q^{op},$
therefore 
 $f^{-1} \in \Aut$ and $gf \in \Aut$.
 \end{proof}
%
%%\paragraph{1.5}  
%Due to Lemma \ref{lemma1} and Corollary \ref{lemma_property}, the cluster automorphisms of ${\cal A=A}(\amas,Q)$ are divided into two sets. A cluster automorphism $f:{\cal A\to A}$  is called {\it direct} if it satisfies condition (a) of Lemma \ref{lemma1}
%% there exists an isomorphism of quivers
%%$\varphi : Q\longrightarrow Q(f(\amas))$ such that $\varphi(x_i)=f(x_i)$ for all $x_i\in Q_0$; 
%and $f$ is called \emph{inverse} if it satisfies condition (b) of Lemma \ref{lemma1}.
%%f there exists an isomorphism of quivers
%%$\varphi : Q^{\textup{op}}\longrightarrow Q(f(\amas))$ such that $\varphi(x_i)=f(x_i)$ for all $x_i\in Q_0$.

\begin{lemma}
\label{lemma_index}
 The set
$\Autp$ of all direct cluster automorphisms of $\cal A$ is a normal subgroup of $\Aut$ of index at most two. 
\end{lemma}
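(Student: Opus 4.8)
The plan is to show two things: that $\Autp$ is a subgroup, and that it is normal of index at most two. The key structural fact I would exploit is the dichotomy already established in Lemma \ref{lemma1} and Corollary \ref{lemma_property}: every cluster automorphism $f$ of $\mathcal{A}(\amas,Q)$ induces, for \emph{every} seed $(\amas',Q')$, a quiver isomorphism that is either $Q'\cong Q(f(\amas'))$ (the direct case) or $Q'^{\textup{op}}\cong Q(f(\amas'))$ (the inverse case), and this direct-versus-inverse character is a seed-independent invariant of $f$. The whole argument reduces to understanding how this invariant behaves under composition and inversion.

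First I would verify that $\Autp$ is closed under composition and inverses. For $f,g$ direct, given a seed $(\amas,Q)$, we have $\varphi_g:Q\to Q(g(\amas))$ induced by $g$ and $\varphi_f:Q(g(\amas))\to Q(f(g(\amas)))$ induced by $f$ applied to the seed $(g(\amas),Q(g(\amas)))$; composing gives a quiver isomorphism $Q\to Q((fg)(\amas))$ induced by $fg$, so $fg$ is direct. For inverses, the isomorphism $\varphi_f:Q\to Q(f(\amas))$ induced by $f$ has an inverse $\varphi_f^{-1}:Q(f(\amas))\to Q$, and since $f^{-1}$ carries the seed $(f(\amas),Q(f(\amas)))$ back to $(\amas,Q)$, this shows $f^{-1}$ is direct. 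Hence $\Autp$ is a subgroup.

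The central point is the multiplication rule for the direct/inverse character, which I would phrase as a sign: assign $\varepsilon(f)=+1$ if $f$ is direct and $\varepsilon(f)=-1$ if $f$ is inverse. The composition argument above, carried out in all four parity cases, shows $\varepsilon(fg)=\varepsilon(f)\varepsilon(g)$; for instance, if $f$ is inverse and $g$ is direct, then $\varphi_g:Q\to Q(g(\amas))$ is a genuine isomorphism while $f$ induces $Q(g(\amas))^{\textup{op}}\to Q(f(g(\amas)))$, and composing with $\varphi_g$ (which equals an isomorphism $Q^{\textup{op}}\to Q(g(\amas))^{\textup{op}}$ after passing to opposites) yields $Q^{\textup{op}}\to Q((fg)(\amas))$, so $fg$ is inverse, matching $(-1)(+1)=-1$. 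Thus $\varepsilon:\Aut\to\ZZ_2=\{\pm1\}$ is a group homomorphism with kernel exactly $\Autp$. A kernel is automatically normal, and its index equals the order of the image of $\varepsilon$ inside $\ZZ_2$, which is at most two; this gives both the normality and the index bound at once.

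I expect the main obstacle to be the bookkeeping in the parity multiplication rule, specifically handling the passage to opposite quivers cleanly when composing an inverse automorphism with another automorphism. One must be careful that an isomorphism $Q\cong Q(g(\amas))$ induces a corresponding isomorphism $Q^{\textup{op}}\cong Q(g(\amas))^{\textup{op}}$, and track that the composite isomorphism genuinely sends the point $p_{x}$ to $p_{(fg)(x)}$ as required by Lemma \ref{lemma1}, rather than merely producing an abstract quiver isomorphism. Everything else is formal once the homomorphism $\varepsilon$ is in hand; the index-at-most-two and normality statements are then immediate from the first isomorphism theorem applied to a homomorphism landing in $\ZZ_2$.
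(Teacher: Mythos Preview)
Your proposal is correct and follows essentially the same approach as the paper: both arguments hinge on defining a parity homomorphism $\Aut\to\ZZ_2$ recording whether an automorphism is direct or inverse, and reading off the subgroup, normality, and index statements from its kernel. The only cosmetic difference is that the paper first verifies normality directly via conjugation (checking that $gfg^{-1}$ is direct whenever $f$ is) before introducing the homomorphism, whereas you extract normality from the kernel property; your packaging is slightly more economical but the substance is the same.
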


\begin{proof} Clearly, the identity of $\cal A$ is a direct automorphism. Also, if $f,g \in \Autp$, then $fg^{-1} \in \Autp$, therefore $\Autp$ is a subgroup of $\Aut$. The normality follows from the fact that if $f\in\Autp$ and $g\in\Aut$, then $gfg^{-1}$ induces an automorphism of $Q$ even if $g$ induces an anti-isomorphism. 

In order to prove the statement about the index, let us consider a map $\phi: \Aut \to \ZZ_2$ defined by 
\begin{equation}\label{2.3}
\phi(f) = \left\{\begin{array}{l} \bar{0},  \;\mbox{if} \;\; f\in \Autp \\\bar{1}, \;\mbox{if} \;\; f\notin \Autp  \end{array}\right. .
\end{equation}
 
The map $\phi$ is a group homomorphism. Indeed, if $f,g\in\Aut$, then $\phi(fg) = \bar{0}$ if and only if $fg \in \Autp$, that is, if and only if $f$ and $g$ are both direct or both inverse. The latter condition may be written as $\phi(f)=\phi(g)$, which holds if and only if $\phi(f)+\phi(g)=\bar{0}.$ Thus $\phi(fg) = \phi(f)+\phi(g)$, and $\phi$ is a group homomorphism.  Since $\textup{Ker}\, \phi=\Autp$ and ${\rm Im} \,\phi \subseteq \ZZ_2,$ the lemma is proved. 
 \end{proof}
 \begin{example}
Here is an example of an inverse cluster automorphism. Let $Q$ be the following quiver of type $\A_3$: 
\begin{equation*}
\xymatrix{p_{x_1}&p_{x_2}\ar[l]\ar[r]&p_{x_3}}
%\begin{array}{ccccc}  \underset{1}{\cdot} & \longleftarrow &\underset{2}{\cdot} & \longrightarrow\underset{3}{\cdot} pppppp
% \end{array}
\end{equation*}
and $\amas=\{x_1,x_2,x_3\}$. The cluster variables computed inside the cluster category ${\cal C}_Q$ (see \cite{BMRRT} or Section \ref{sect_acyclic} below) are as follows: 
\begin{equation*}
\scalebox{0.95}{$\begin{array}{ccccccccccccccc} 
& & x_3 & & & & \frac{1+x_2+x_1x_3}{x_2x_3} & & & & \frac{1+x_2}{x_1}& & & &x_1\\
& \nearrow & & \searrow & & \nearrow & & \searrow& & \nearrow & & \searrow& & \nearrow &  \\
x_2& & & & \frac{1+x_1x_3}{x_2} &&&&\frac{x_2^2 + 2x_2+1+x_1x_3}{x_1x_2x_3}&&&&x_2  \\
& \searrow & & \nearrow & & \searrow & & \nearrow& & \searrow & & \nearrow& & \searrow &  \\
& &x_1& &&&\frac{1+x_2+x_1x_3}{x_1x_2}&&&&\frac{1+x_2}{x_3}&&&&x_3
 \end{array}$}
\end{equation*}
Define a map $f:\cal A\to A$ to be induced by the mutation $\mu_{x_2}$, so that on the initial cluster we have
\begin{equation*}
f(x_1) = x_1, \qquad f(x_2) = \frac{1+x_1x_3}{x_2}, \qquad f(x_3) = x_3.
\end{equation*}
Then $f$ extends to an algebra homomorphism. A straightforward computation gives the images under $f$ of the remaining cluster variables of the algebra: 
\begin{eqnarray*}
&& f \left( \frac{1+x_1x_3}{x_2}  \right) = x_2, \qquad \qquad \qquad f \left( \frac{1+x_1x_3+x_2}{x_2x_3}  \right) = \frac{x_2+1}{x_3}, \\
&& f \left( \frac{1+x_1x_3+x_2}{x_1x_2}  \right) = \frac{x_2+1}{x_1}, \qquad f \left( \frac{x_2^2 + 2x_2+1+x_1x_3}{x_1x_2x_3}  \right) = \frac{x_2^2 + 2x_2+1+x_1x_3}{x_1x_2x_3}, \\
&& f \left( \frac{1+x_2}{x_1}  \right) = \frac{x_2+1+x_1x_3}{x_1x_2}, \qquad f \left( \frac{1+x_2}{x_3}  \right) = \frac{x_2+1+x_1x_3}{x_2x_3} .
\end{eqnarray*}
Thus $f$ is a cluster automorphism sending the   seed $$(\{x_1,x_2,x_3\},
\xymatrix{p_{x_1}&p_{x_2}\ar[l]\ar[r]&p_{x_3}} )$$ to the seed $$
(\{f(x_1),f(x_2),f(x_3)\},\xymatrix{p_{f(x_1)}\ar[r]&p_{f(x_2)}&p_{f(x_3)}\ar[l]})$$
%x_1\rightarrow (1+x_1x_3)/x_2 \leftarrow x_3,$$
hence $f$ induces an isomorphism of quivers $Q(\amas)^{op}\cong Q(f(\amas))$. 
\end{example}
%
%\begin{remark} 
%{As we see in the next section, the index actually equals $2$ for all cluster algebras that have a seed whose quiver $Q$ is mutation equivalent to its opposite quiver $Q^{op}$.}
%\end{remark}

Two quivers $Q$ and $Q'$ are called \emph{mutation equivalent} if there exists a sequence of mutations transforming $Q$ to $Q'$. 

\begin{theorem}\label{thm ref}
Let $\mathcal{ A}=\mathcal{A}(\amas,Q)$.
\begin{itemize}
\item[(a)] {If $Q$ and $Q^{op}$ are mutation equivalent then the index of $\aut$ in $\Aut$ is two.}
\item[(b)] If $Q$ and $Q^{op}$ are not mutation equivalent then $\Aut=\Autp$.
\end{itemize}
\end{theorem}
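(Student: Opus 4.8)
The plan is to relate the index of $\Autp$ in $\Aut$, which by Lemma~\ref{lemma_index} is at most two, to the mutation equivalence of $Q$ and $Q^{\textup{op}}$. The key observation is that the index is exactly two precisely when an \emph{inverse} cluster automorphism exists, since the homomorphism $\phi:\Aut\to\ZZ_2$ of \eqref{2.3} is surjective if and only if some $f\in\Aut$ lies outside $\Autp$. So both parts reduce to the following equivalence: there exists an inverse cluster automorphism of $\cal A$ if and only if $Q$ and $Q^{\textup{op}}$ are mutation equivalent.

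For part (a), assume $Q$ and $Q^{\textup{op}}$ are mutation equivalent, say a sequence of mutations $\mu=\mu_{i_k}\cdots\mu_{i_1}$ transforms $Q$ into $Q^{\textup{op}}$. The idea is to build an inverse cluster automorphism from this data. First I would apply the sequence $\mu$ to the initial seed $(\amas,Q)$ to obtain a new seed $(\tilde\amas,\tilde Q)$ with $\tilde Q$ isomorphic to $Q^{\textup{op}}$ via some quiver isomorphism $\psi:Q^{\textup{op}}\to \tilde Q=Q(\tilde\amas)$. Since clusters are transcendence bases of $\cal F$, I can define a $\ZZ$-algebra homomorphism $f$ on generators by sending each initial cluster variable $x_i$ to the cluster variable in $\tilde\amas$ corresponding under $\psi$ to the point $p_{x_i}\in Q_0^{\textup{op}}$; explicitly, $f(x_i)=\tilde\amas_{\psi(i)}$. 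This $f$ maps the initial cluster $\amas$ onto the cluster $\tilde\amas$, so $f(\amas)$ is a cluster and $Q(f(\amas))=\tilde Q\cong Q^{\textup{op}}$ with the bijection $\varphi(p_{x_i})=p_{f(x_i)}$ agreeing with $\psi$. Thus condition (b) of Lemma~\ref{lemma1} holds, so $f$ is a cluster automorphism; it is inverse by construction. Hence $\phi(f)=\bar1$, $\phi$ is surjective, and the index is exactly two.

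For part (b), I would argue the contrapositive: if the index is not two, then by Lemma~\ref{lemma_index} it is one and $\Aut=\Autp$, giving the conclusion directly; conversely if there \emph{were} an inverse cluster automorphism $f$, then by Corollary~\ref{lemma_property}(b), $f$ induces a quiver isomorphism $Q^{\textup{op}}\cong Q(f(\amas))$, and since $f(\amas)$ is a cluster obtained from $\amas$ by a sequence of mutations, $Q(f(\amas))$ is mutation equivalent to $Q$; composing with the isomorphism shows $Q^{\textup{op}}$ is mutation equivalent to $Q$, contradicting the hypothesis. Therefore no inverse cluster automorphism exists, $\phi$ is trivial, and $\Aut=\Autp$.

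The main obstacle is the construction in part (a): one must verify that the map $f$ defined on the initial cluster genuinely extends to a well-defined $\ZZ$-algebra \emph{automorphism} of $\cal A$ (not merely of the ambient field $\cal F$), and that the induced point bijection $\varphi$ coincides with the quiver isomorphism $\psi$ so that Lemma~\ref{lemma1}(b) applies cleanly. The extension to $\cal F$ is immediate from Remark~\ref{rmk_initial}(b) since $f(\amas)$ is a transcendence basis, but one should confirm that $f$ restricts to an automorphism of the subalgebra $\cal A$; this follows once $f$ is shown to be a cluster automorphism, because Corollary~\ref{cor2.7bis} guarantees it permutes clusters and hence preserves $\cal X$. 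The remaining bookkeeping---matching the combinatorial labels of $\psi$ with the algebraic map $f$---is routine but is where care is needed.
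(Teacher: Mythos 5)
Your proposal is correct and takes essentially the same approach as the paper: in part (a) you construct an inverse cluster automorphism by transporting the initial cluster along the point bijection coming from the quiver isomorphism $Q^{\textup{op}}\cong Q(\tilde{\amas})$ and then applying Lemma \ref{lemma1}(b), and in part (b) you give the same contradiction argument, namely that an inverse automorphism would exhibit a seed of $\mathcal{A}$ with quiver isomorphic to $Q^{\textup{op}}$, forcing $Q$ and $Q^{\textup{op}}$ to be mutation equivalent. The subtle point you flag at the end---that the map defined on a transcendence basis restricts from $\mathcal{F}$ to an automorphism of the subalgebra $\mathcal{A}$---is passed over just as briefly in the paper's own proof, so it does not separate your argument from theirs.
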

 
\begin{proof}{
If $Q$ and $Q^{op}$ are mutation equivalent then there exists a sequence of mutations $\mu$ such that 
$\mu(\amas,Q(\amas))=(\amas',Q(\amas'))$ together with two isomorphisms of quivers $\varphi:Q(\amas)\stackrel{\cong }{\to}Q$ and $\varphi':Q^{op}\stackrel{\cong }{\to}Q(\amas')$. 
\ralf{Observe that there is no reason for the isomorphisms $\varphi$ and $\varphi'$ to be compatible with the canonical bijection between the points $Q_0\cong Q(\amas')_0$.}
Define a map $f:\amas\to\amas'$ as the following composition of the bijections on points 
\[Q(\amas)_0\stackrel{\varphi}{\longto} Q_0 \stackrel{=}{\longto} (Q^{op})_0\stackrel{\varphi'}{\longto} Q(\amas')_0.\] Since $\amas$ and $\amas'$ are transcendence bases of the ambient field, then $f$ extends to a $\ZZ$-algebra automorphism of $\mathcal{A}$, and, moreover, $\varphi' :Q^{op}\stackrel{\cong }{\to}Q(\amas')$  is an isomorphism of quivers that satisfies $\varphi'(p_x)=p_{f(x)}$,
and it follows from Lemma 2.3 that  $f\in \Aut\setminus\Autp$. 
This shows (a).}

In order to show (b), suppose that  $Q$ and $Q^{op}$ are not mutation equivalent. Suppose that there exists $f\in\Aut\setminus\Autp$. Then there exists a seed $(\amas,Q(\amas))$ in $\mathcal{A}$ with $Q(\amas)\cong Q$ whose image  $(f(\amas),Q(f(\amas)))$ under $f$ is a seed in $\mathcal{A}$ whose quiver is isomorphic to $Q^{op}$. Thus $Q$ and $Q^{op}$ are mutation equivalent, a contradiction.
\end{proof}

\begin{corollary}\label{cor two new}
{If there exists $\sigma\in\Aut\setminus\aut$ of order two, then $\Aut=\aut\rtimes \ZZ_2$.}
\end{corollary}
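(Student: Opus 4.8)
The plan is to recognise this corollary as an instance of the elementary fact that a group possessing a normal subgroup of index two together with a complementary subgroup of order two splits as a semidirect product.

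First I would pin down the index. By Lemma \ref{lemma_index}, $\aut$ is a normal subgroup of $\Aut$ of index at most two. The hypothesis furnishes an element $\sigma\in\Aut\setminus\aut$, so $\aut$ is a \emph{proper} subgroup; hence its index is exactly two, and $\Aut$ is the disjoint union of the two cosets $\aut$ and $\sigma\aut$.

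Next I would set $H=\langle\sigma\rangle$. Since $\sigma$ has order two, $H=\{1,\sigma\}\cong\ZZ_2$ is a subgroup of $\Aut$, and I would verify the two defining conditions of an internal semidirect product of $\aut$ by $H$. For the trivial intersection $\aut\cap H=\{1\}$: the only elements of $H$ are $1$, which lies in $\aut$, and $\sigma$, which by hypothesis does not; so the intersection is $\{1\}$. For the generation $\aut H=\Aut$: since $\aut\subseteq\aut H$ and $\sigma\aut\subseteq\aut H$, while $\Aut=\aut\cup\sigma\aut$ by the index computation, we obtain $\aut H=\Aut$. Together with the normality of $\aut$ supplied by Lemma \ref{lemma_index}, these two facts yield the decomposition $\Aut=\aut\rtimes H=\aut\rtimes\ZZ_2$.

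The argument is routine and presents no genuine obstacle; the only point requiring a moment's care is confirming that the index equals exactly two, so that $H$ is a bona fide complement rather than merely a subgroup. I would also emphasise that no hypothesis on the conjugation action of $\sigma$ on $\aut$ is needed here, since the corollary asserts only a semidirect (not necessarily direct) product; the question of whether the product is direct is precisely the point addressed separately in Theorem \ref{thm ref} and the first theorem of the introduction.
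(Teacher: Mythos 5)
Your proposal is correct and is essentially the paper's own argument: the paper phrases it as the splitting of the short exact sequence $1 \to \Autp \to \Aut \stackrel{\phi}{\to} \ZZ_2 \to 1$ (with $\phi$ from Lemma \ref{lemma_index}) by the order-two element $\sigma$, which is exactly your internal semidirect-product verification (normality and index two from Lemma \ref{lemma_index}, complement $\langle\sigma\rangle$ meeting $\aut$ trivially) in different language. Your closing remark that no hypothesis on the conjugation action is needed, and that directness is a separate issue, is also consistent with the paper, which addresses non-directness only later (Theorem \ref{theorem_semidirect}).
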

\begin{proof}
\ralf{If $\Aut= \Autp $ then there is an exact sequence of groups. }
\[ 1 \to \Autp \to \Aut \stackrel{\phi}{\to} \ZZ_2\to1\]
\ralf{where $\phi$ is defined in equation (\ref{2.3}). We have 
$\Aut\cong \Autp \rtimes\ZZ_2$ if and only 
if this sequence splits, and this is the case if and only if there exists an inverse automorphism $\sigma$ of order 2. }
%
%{ From Lemma \ref{lemma_index} 
% we know that $\Autp$ is a normal subgroup of $\Aut$ of index at most $2$. 
%  Moreover, }
%\begin{equation*}
% \langle \sigma\rangle \cap {\rm Aut}^+ {\cal A} = \{1\}
% \end{equation*}
%and $\langle \sigma\rangle {\rm Aut}^+ {\cal A} = {\rm Aut}^+ {\cal A} \cup \sigma (\Autp) = \Aut $.

% {By a well-known theorem of group theory (see, for instance, \cite{DF}
%), we have }
%%
%\begin{equation*}
%{\rm Aut} \, {\cal A} = {\rm Aut}^+ {\cal A} \rtimes \langle \sigma\rangle, 
%\end{equation*}
%%
%{where the action of $\sigma$  on $\Autp$ is given by the conjugation, that is, if $g\in \Autp$, then $\sigma\cdot g = \sigma g \sigma.$
%Finally, $\langle \sigma \rangle \cong \ZZ_2.$ 
%}
\end{proof}

\begin{example}\label{ex torus}
Let $Q$ be the quiver 
$$\xymatrix@R20pt@C25pt{1\ar@<2pt>[rd]\ar@<-2pt>[rd] &&2 \ar@<2pt>[ll]\ar@<-2pt>[ll]\\
&3\ar@<2pt>[ru]\ar@<-2pt>[ru]
}$$
%$$\xymatrix@R15pt@C10pt{p_{x_1}\ar@<2pt>[rd]\ar@<-2pt>[rd] &&p_{x_2} \ar@<2pt>[ll]\ar@<-2pt>[ll]\\
%&p_{x_3}\ar@<2pt>[ru]\ar@<-2pt>[ru]
%}$$
and $\mathcal{A}=\mathcal{A}(\{x_1,x_2,x_3\},Q\})$ its cluster algebra. Then the mutation  of $Q$ in $1$ is the quiver
$$ \xymatrix@R20pt@C25pt{1\ar@<2pt>[rr]\ar@<-2pt>[rr] &&2 \ar@<2pt>[ld]\ar@<-2pt>[ld]\\
&3\ar@<2pt>[lu]\ar@<-2pt>[lu]
},$$ and
thus it induces an inverse cluster automorphism $f$ given by  $$f(x_1)=\frac{x_2^2+x_3^2}{x_1}, \ f(x_2)=x_2 \textup{ and } f(x_3)=x_3.$$

Mutating once more, this time in $x_2$ yields back the quiver $Q$, and thus  it induces a direct cluster automorphism $g$ given by 
$$g(x_1)=\frac{x_2^2+x_3^2}{x_1}, \ g(x_2)=\frac{1}{x_2} \left(\left(\frac{x_2^2+x_3^2}{x_1}\right)^2+x_3^2\right)\textup{ and } g(x_3)=x_3 .$$

Note that in this particular example, any sequence of mutations will either produce the quiver $Q$ (if the number of mutations is even) or its opposite (if the number of mutations is odd), and thus, in this example, any sequence of mutations induces a cluster automorphism.

\end{example}

\begin{example}\label{ex ref}
Let $\mathcal{A}=\mathcal{A}(\amas,Q)$, where  $Q$ is the quiver \[\xymatrix{&2\ar@<3pt>[rd]\ar@<0pt>[rd]\\1\ar@<3pt>[rr]\ar@<-3pt>[rr]\ar@<0pt>[rr] \ar[ur] &&3\ .}\]
Then $Q $ is not mutation equivalent to $Q^{op}$ (and therefore  $\aut=\Aut$). This can be seen using the associated cluster category, see \cite{BMRRT} or Section 3 below. Indeed, assume that $Q$ is mutation equivalent to $Q^{op}$. There exists a local slice $\Sigma$ (in the sense of \cite{ABS2}) in the cluster category ${\cal C}_Q$ of $Q$ whose quiver is isomorphic to $Q$. Since $Q$ and $Q^{op}$ are acyclic, the slice $\Sigma$ lies in the transjective component $\Gamma_{tr}$ of the Auslander-Reiten quiver of ${\cal C}_Q$, and this transjective component is of the form $\ZZ Q^{op}$. In particular, the point (corresponding to) $3$ in $\Sigma$ is the target of five arrows in $\Gamma_{tr}$. The point $1$ in $\Sigma$ is the source of three of the arrows of target $3$, and the point $2$ is the source of the remaining two arrows. Since $\Gamma_{tr}\cong \ZZ Q^{op}$, this implies the existence of an arrow from $2$ to $1$, and then the quiver of $\Sigma$ is not isomorphic to $Q$, a !
 
 contradiction. 

%Then 
%the transjective component is as follows
%\[\xymatrix{&&3\ar[ddr]\ar[drr]^3&&&3'\\
%&2\ar[ur]^3\ar[drr]^2&&&2'\ar[ur]^3\ar[drr]^2 \\
%1\ar[ur]^2\ar@/^{30pt}/[uurr]&&&1'\ar[ur]^2\ar@/^{30pt}/[uurr]&&&1''
%}\]
%where the labels on the arrows denote again the number of arrows and the labels on the points are simply for reference. In the transjective component, 
%the slice ending at point 3 has points 1,2,3 and is isomorphic to $Q$, but the slice starting at $Q$ has points 3,1',2' and is not isomorphic to $Q$ and not isomorphic to $Q^{op}$. The morphism of quivers between the two slices given on points by $1\mapsto 1', 2\mapsto 2', 3\mapsto 3$ acts on the arrows as follows: 
%\[\begin{array}{ccc} \xymatrix{ 1\ar[r] &3 }& \mapsto &\xymatrix{1' &3\ar[l]
%}\\
% \xymatrix{ 2\ar[r]^3 &3 }& \mapsto &\xymatrix{2' &3\ar[l]_3}\\ 
% \xymatrix{ 1\ar[r]^2 &2 } &\mapsto& \xymatrix{1' \ar[r]^2&2'
%}
%\end{array}
%\]
%So it reverses the arrows between 1 and 3 as well as between 2 and 3, but it preserves the orientation of the arrows between 1 and 2.

\end{example}

\subsection{Cluster automorphisms induced by quiver automorphisms}
 We now show how any automorphism of the quiver $Q$ induces a direct cluster automorphism of ${\cal A} = {\cal A}(\amas,Q)$. Let 
$\sigma \in {\rm Aut}\, Q$. Define a map $f_\sigma: \amas \to \amas$  by
$f_\sigma(x)= x'$, for $x \in \amas$, where $x'\in\amas $ is the unique cluster variable such that $\sigma(p_x)=p_{x'}$.

 Then $f_\sigma$ permutes the cluster variables in $\amas$ and clearly extends to a unique automorphism $f_\sigma: \cal F \to F$ of the ambient field. We now show that $f_\sigma$ is a direct cluster automorphism of $\cal A.$ 

\begin{proposition}\label{prop 2.13}
\label{prop_kernel}
The map $F:\sigma \mapsto f_\sigma$ is a group homomorphism from ${\rm Aut}\, Q$ to $\Autp$ whose kernel is given by the stabiliser ${\rm Stab} \, Q_0$ of the points of $Q.$
\end{proposition}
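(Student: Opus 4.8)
The plan is to prove the three assertions in turn: that $f_\sigma$ is a \emph{direct} cluster automorphism, so that $F$ is well defined as a map ${\rm Aut}\,Q\to\Autp$; that $F$ respects composition; and that $\ker F={\rm Stab}\,Q_0$.

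First I would show $f_\sigma\in\Autp$. By construction $f_\sigma$ permutes the initial cluster, so $f_\sigma(\amas)=\amas$ is a cluster, giving \textup{(CA1)}, and $Q(f_\sigma(\amas))=Q$. The bijection on points induced by $f_\sigma$, namely $\varphi(p_x)=p_{f_\sigma(x)}$, equals $\sigma$ itself, which is by hypothesis an automorphism of $Q$; hence condition (a) of Lemma \ref{lemma1} holds with $\varphi=\sigma$. The one point that genuinely uses the quiver-automorphism hypothesis is that $f_\sigma$ restricts to a $\ZZ$-algebra automorphism of ${\cal A}$ (a priori it is only an automorphism of the ambient field ${\cal F}$). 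For this I would apply $f_\sigma$ to the exchange relation $x_ix_i'=\prod_{\alpha\in i^+}x_{t(\alpha)}+\prod_{\alpha\in i^-}x_{s(\alpha)}$ and use that $\sigma$ carries $i^+$ and $i^-$ bijectively onto $\sigma(i)^+$ and $\sigma(i)^-$; this identifies $f_\sigma(\mu_{x_i}(x_i))$ with $\mu_{x_{\sigma(i)}}(x_{\sigma(i)})$, again a cluster variable. Propagating this computation from seed to seed by induction on the number of mutations shows $f_\sigma({\cal X})\subseteq{\cal X}$, and running the same argument for $\sigma^{-1}$ produces a two-sided inverse $f_{\sigma^{-1}}$, so $f_\sigma$ is an automorphism of ${\cal A}$. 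Lemma \ref{lemma1}(a) then certifies that it is a direct cluster automorphism.

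Next I would verify that $F$ is a homomorphism. For $\sigma,\tau\in{\rm Aut}\,Q$ and $x\in\amas$, the defining relation $p_{f_\sigma(x)}=\sigma(p_x)$ (applied to $\sigma$, to $\tau$, and to $\sigma\tau$) gives
\[
p_{f_{\sigma\tau}(x)}=(\sigma\tau)(p_x)=\sigma\bigl(\tau(p_x)\bigr)=\sigma\bigl(p_{f_\tau(x)}\bigr)=p_{f_\sigma(f_\tau(x))}.
\]
Since $p$ is a bijection, $f_{\sigma\tau}$ and $f_\sigma\circ f_\tau$ agree on $\amas$, and a cluster automorphism is determined by its values there (Remark \ref{rmk_initial}(b)); hence $F(\sigma\tau)=F(\sigma)F(\tau)$. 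For the kernel, $F(\sigma)=f_\sigma$ is the identity exactly when $f_\sigma(x)=x$ for every $x\in\amas$, i.e.\ when $\sigma(p_x)=p_x$ for all $p_x\in Q_0$, which is precisely the condition $\sigma\in{\rm Stab}\,Q_0$. Thus $\ker F={\rm Stab}\,Q_0$.

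The step I expect to be the main obstacle is the very first one, namely passing from $f_\sigma$ as an automorphism of the field ${\cal F}$ to an automorphism of the algebra ${\cal A}$: the exchange-relation computation is itself routine, but one must organise the induction over all seeds carefully so as to conclude $f_\sigma({\cal X})\subseteq{\cal X}$ before Lemma \ref{lemma1} may legitimately be invoked. Once this is in place, the homomorphism property and the kernel computation are purely formal.
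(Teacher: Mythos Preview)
Your proof is correct and takes essentially the same approach as the paper: the exchange-relation computation you describe is exactly the paper's direct verification of (CA2), after which directness follows from $\sigma Q\cong Q$, and the paper then dismisses the homomorphism and kernel claims as ``easy to see''. Your extra care about $f_\sigma$ restricting from $\mathcal{F}$ to $\mathcal{A}$ is a point the paper leaves implicit (it is the propagation argument behind Proposition~\ref{prop1}), and your appeal to Lemma~\ref{lemma1} is harmless but redundant once (CA2) is already in hand.
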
 

\begin{proof} In order to show that $f_\sigma$ is a cluster automorphism we must prove that conditions (CA1) and (CA2) are satisfied for the initial cluster $\amas$. Since $f_\sigma(\amas) = \amas$, the first condition obviously holds.

 Let now $x \in \amas$ and consider the mutation $\mu_x = \mu_{x,\amas}$. Since $\sigma \in {\rm Aut}\, Q$, we have $\sigma Q = Q$ and hence $\mu_{f_\sigma(x),\amas}$ is a mutation of the seed $(\amas, Q)$. 

We want to show (CA2), that is
\[ f_\sigma(\mu_{x,\amas}(x))=\mu_{f_\sigma(x),\amas}(f_\sigma(x)).\]
Using the exchange relations, we have 
\begin{equation} \label{eq25}
f_\sigma(\mu_{x,\amas}(x)) 
= \frac{1}{f_{\sigma}(x)} \left(\,\prod_{\za\in p_x^+}f_\sigma(x_{t(\alpha)}) + \prod_{\za\in p_x^-}f_\sigma(x_{s(\alpha)})\right) ,
\end{equation}
while 
\begin{equation} \label{eq26}
\mu_{f_\sigma(x),\amas}(f_\sigma(x)) 
= \frac{1}{f_\sigma(x)} \left(\prod_{\zb\in \sigma(p_x)^+} x_{t(\zb)} + \prod_{\zb\in \sigma(p_x)^-}x_{s(\zb)}\right) .
\end{equation}
Since $\sigma  $ is an automorphism of $Q$, we have $\sigma(t(\za))=t(\sigma(\za))$, $\sigma(s(\za))=s(\sigma(\za))$, and $\sigma(p_x^+)=\sigma(p_x)^+$, $\sigma(p_x^-)=\sigma(p_x)^-$.
Therefore
\begin{eqnarray*}
\prod_{\za\in p_x^+}f_\sigma(x_{t(\alpha)}) = \prod_{\za\in p_x^+}x_{t(\sigma(\alpha))} =\prod_{\zb\in \sigma(p_x)^+} x_{t(\zb)} && \textup{and} 
\\
\prod_{\za\in p_x^-}f_\sigma(x_{s(\alpha)}) = \prod_{\za\in p_x^-}x_{s(\sigma(\alpha)) }=\prod_{\zb\in \sigma(p_x)^-} x_{s(\zb)},
\end{eqnarray*}
which shows that the right hand sides of  equations (\ref{eq25}) and (\ref{eq26}) are equal, and hence (CA2). This completes the proof that $f_\sigma$ is a cluster automorphism. It is direct because $\sigma Q \cong Q$. 
This shows that the map $F$ is well-defined. It is easy to see that $F$ is a group homomorphism with   kernel ${\rm Stab} \, Q_0.$

 \end{proof}

\begin{example}
The automorphism of the Kronecker quiver 
\begin{equation*} \xymatrix{1&2\ar@<2pt>[l]\ar@<-2pt>[l]}
%\begin{array}{ccc}  \underset{1}{\cdot} & {\leftleftarrows} &\underset{2}{\cdot}
% \end{array}
\end{equation*}
which fixes the points and interchanges the arrows lies in the kernel of $F$.
\end{example}

We finally observe that an anti-automorphism of the quiver induces in
the same way an inverse cluster automorphism. Let $\mathcal{A}\left( \mathbf{x},Q\right) $ be a cluster algebra and $\sigma $ be an anti-automorphism of $
Q$. We define $f_{\sigma }:\mathbf{x}\rightarrow \mathbf{x}$ by setting $
f_{\sigma }\left( x\right) =x^{\prime }$ where $x^{\prime }$ is the unique
cluster variable such that $\sigma \left( p_{x}\right) =p_{x^{\prime }}.$ Then $f_{\sigma }$ is clearly an automorphism of the ambient field $\mathcal{
F}$.
\begin{proposition}\label{prop anti}  With the above notation, the map $
f_{\sigma }$ is an inverse cluster automorphism of $\mathcal{A}
\left( \mathbf{x},Q\right) $.
\end{proposition}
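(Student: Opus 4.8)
The plan is to mirror the proof of Proposition~\ref{prop 2.13} almost verbatim, replacing the single ingredient that distinguishes an automorphism from an anti-automorphism. Recall that $\sigma$ being an anti-automorphism of $Q$ means that $\sigma$ is a bijection on points reversing the incidences: for every arrow $\alpha\in Q_1$ with source $s(\alpha)$ and target $t(\alpha)$, the image $\sigma(\alpha)$ is an arrow with $s(\sigma(\alpha))=\sigma(t(\alpha))$ and $t(\sigma(\alpha))=\sigma(s(\alpha))$. Equivalently, $\sigma$ gives an isomorphism $Q^{\textup{op}}\to Q$ on the nose, and in terms of neighbourhoods $\sigma(p_x^+)=\sigma(p_x)^-$ and $\sigma(p_x^-)=\sigma(p_x)^+$. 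First I would record that, exactly as before, $f_\sigma(\amas)=\amas$, so condition (CA1) holds trivially for the initial cluster.

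Next I would verify (CA2) for the initial cluster by computing both sides of
\[
f_\sigma(\mu_{x,\amas}(x)) \quad\text{versus}\quad \mu_{f_\sigma(x),\amas}(f_\sigma(x))
\]
using the exchange relation, just as in equations (\ref{eq25}) and (\ref{eq26}). The left-hand side is $\frac{1}{f_\sigma(x)}\bigl(\prod_{\alpha\in p_x^+}f_\sigma(x_{t(\alpha)})+\prod_{\alpha\in p_x^-}f_\sigma(x_{s(\alpha)})\bigr)$, while the right-hand side is $\frac{1}{f_\sigma(x)}\bigl(\prod_{\beta\in\sigma(p_x)^+}x_{t(\beta)}+\prod_{\beta\in\sigma(p_x)^-}x_{s(\beta)}\bigr)$. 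The point where the argument diverges from the automorphism case is exactly here: since $\sigma$ reverses arrows, $f_\sigma$ carries the positive neighbourhood of $p_x$ to the \emph{negative} neighbourhood of $\sigma(p_x)$ and vice versa. Concretely, $\prod_{\alpha\in p_x^+}f_\sigma(x_{t(\alpha)})=\prod_{\beta\in\sigma(p_x)^-}x_{s(\beta)}$ and $\prod_{\alpha\in p_x^-}f_\sigma(x_{s(\alpha)})=\prod_{\beta\in\sigma(p_x)^+}x_{t(\beta)}$. Thus the two products in the left-hand side match the two products in the right-hand side, but \emph{with their roles swapped}. Because the exchange relation is a \emph{sum} of these two monomials, addition being commutative, the two numerators are nonetheless equal, and (CA2) follows. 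This is the step I expect to require the most care, since one must keep the source/target bookkeeping straight and check that the swap of the two summands is harmless precisely because of the symmetry of the exchange relation.

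Having verified (CA1) and (CA2) for the initial seed, the definition of cluster automorphism (Definition~\ref{def_main}) is satisfied, so $f_\sigma$ is a cluster automorphism. Finally I would argue that it is \emph{inverse} rather than direct. The induced bijection on points $\varphi(p_x)=p_{f_\sigma(x)}=\sigma(p_x)$ is, by hypothesis, an isomorphism of quivers $Q^{\textup{op}}\to Q=Q(f_\sigma(\amas))$, so condition (b) of Lemma~\ref{lemma1} holds rather than condition (a). By the terminology fixed after the proof of that lemma, $f_\sigma$ is therefore an inverse cluster automorphism of $\mathcal{A}(\mathbf{x},Q)$, as claimed. No serious obstacle arises beyond the index-swap observation already flagged; the rest is a transcription of the previous proposition.
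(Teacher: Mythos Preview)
Your proposal is correct and is precisely the argument the paper has in mind: the paper's own proof simply reads ``The proof is entirely similar to that of Proposition~\ref{prop 2.13} and will be omitted,'' and what you have written is exactly that similar proof spelled out, with the one necessary change that the arrow-reversal swaps the two monomials in the exchange relation. Nothing is missing.
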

\begin{proof}
The proof is entirely similar to that of Proposition \ref{prop 2.13} and will be
omitted.
\end{proof}

\begin{corollary}\label{cor anti new} {Let $\mathcal{A}$ be a cluster algebra with a seed $(\amas,Q)$ such that $Q$  admits an anti-automorphism.
Then $\Aut=\Autp\rtimes\ZZ_2$.}
\end{corollary}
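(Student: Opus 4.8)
The plan is to derive the statement from Proposition \ref{prop anti} and Corollary \ref{cor two new}. First I would apply Proposition \ref{prop anti} to the given anti-automorphism $\sigma$ of $Q$, obtaining the inverse cluster automorphism $f_\sigma$. Its mere existence shows $\Aut\neq\Autp$, so by Lemma \ref{lemma_index} the subgroup $\Autp$ has index exactly two and we have the short exact sequence $1\to\Autp\to\Aut\stackrel{\phi}{\to}\ZZ_2\to1$. By Corollary \ref{cor two new}, to conclude $\Aut=\Autp\rtimes\ZZ_2$ it is then enough to produce one element of $\Aut\setminus\Autp$ of order two, since such an element splits the sequence.

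The natural candidate is $f_\sigma$ itself. By Remark \ref{rmk_initial}(b) a cluster automorphism is determined by its values on the initial cluster, and there $f_\sigma$ acts as the vertex permutation $\sigma$; hence $f_\sigma^2=f_{\sigma^2}$, the cluster automorphism attached to $\sigma^2\in{\rm Aut}\,Q$. By Proposition \ref{prop_kernel} the assignment $\tau\mapsto f_\tau$ has kernel ${\rm Stab}\,Q_0$, so $f_\sigma^2={\rm id}$ precisely when $\sigma^2$ fixes every vertex of $Q$, that is, when $\sigma$ is an involution on $Q_0$. In that case $f_\sigma$ is an inverse automorphism with $f_\sigma^2={\rm id}$ and $f_\sigma\neq{\rm id}$, hence of order two, and Corollary \ref{cor two new} applies directly.

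The crux is therefore to ensure that $Q$ admits an \emph{involutive} anti-automorphism. The group $G$ of all automorphisms and anti-automorphisms of the finite quiver $Q$ is finite, and by Proposition \ref{prop_kernel} the order of $f_\sigma$ equals the order of the image of $\sigma$ in $G/{\rm Stab}\,Q_0$; in particular $f_\sigma$ has finite order. This order is even, since an odd power of an anti-automorphism is again an anti-automorphism while on a connected quiver carrying an arrow no anti-automorphism can be the identity (that would force a $2$-cycle). Writing the order as $2m$, the power $f_\sigma^{m}=f_{\sigma^m}$ is an inverse automorphism of order two whenever $m$ is odd, so only the case where the order is divisible by four needs further argument. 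This $4$-divisible case is the step I expect to be the main obstacle: in every situation where the corollary is actually invoked the anti-automorphism is a geometric reflection — for example the reversal $i\mapsto n+1-i$ of a linear quiver of type $\A_n$ — and is thus automatically involutive, so the required order-two element is immediate; but a fully general argument must rule out the $4$-divisible case, presumably by exploiting connectedness together with the absence of $2$-cycles to guarantee an involutive anti-automorphism.
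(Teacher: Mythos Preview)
Your plan coincides with the paper's proof: invoke Proposition~\ref{prop anti} to obtain an inverse cluster automorphism $f_\sigma$, then apply Corollary~\ref{cor two new}. The paper's argument is a single sentence asserting, without further comment, that Proposition~\ref{prop anti} produces an element of $\Aut\setminus\aut$ \emph{of order two}. You have actually gone further than the paper in noticing that this requires $\sigma$ to act as an involution on $Q_0$ (since $f_\sigma^2=f_{\sigma^2}$, and $f_{\sigma^2}$ is the identity precisely when $\sigma^2\in{\rm Stab}\,Q_0$), and in attempting to reduce to that case by passing to a suitable odd power. The $4$-divisible obstruction you isolate is not addressed in the paper; the paper either tacitly reads ``anti-automorphism'' as ``involutive anti-automorphism'' or regards the point as evident. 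In every place the corollary is actually applied---Example~\ref{ex anti atilde} and, via Lemma~\ref{lemma_reflection}, Theorem~\ref{theorem_semidirect}---the anti-automorphism in hand is explicitly a reflection, hence an involution, exactly as you surmised. So you are not missing any argument that the paper supplies.
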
 
\begin{proof}
{In this situation,  Proposition 2.15 yields that there exists $\sigma\in\Aut\setminus\aut$ of order two, and the result follows from Corollary \ref{cor two new}.}
\end{proof}
%

%\begin{corollary}
%{If $Q$ is a quiver with an anti-automorphism then $Q$ is mutation equivalent to $Q^{op}$.}
%\end{corollary}
%\begin{proof}
%{This follows from Theorem \ref{thm ref} and Corollary \ref{cor anti new}.}
%\end{proof}

\begin{example}\label{ex anti atilde}
{The cluster algebras of euclidean type $\tilde{\mathbb{A}}$ admit an inverse cluster automorphism. Indeed, for any cluster algebra of type $\tilde{\mathbb{A}}$, there exist integers $p,q$ such that the cluster algebra
has a seed whose quiver is of the following form.}
\[\xymatrix{&2\ar[r]&3\ar[r]&\cdots\ar[r]&p\ar[rd]\\
1\ar[ur]\ar[dr]&&&&&p+q\\
&p+1\ar[r]&p+2\ar[r]&\cdots\ar[r]&p+q-1\ar[ur]
}
\]
{ This quiver has an anti-automorphism given on the points by the permutation that exchanges 1 with $p+q$, $\ell $ with $p+2-\ell$ for $\ell=2,3,\cdots,p$,  and $p+m$ with $p+q-m$ for $m=1,2,\ldots,q-1$. Therefore Proposition \ref{prop anti} implies that the cluster algebra has an inverse cluster automorphism. }
\end{example}

\section{The acyclic case}
\label{sect_acyclic}

%\paragraph{2.1} 

\subsection{The cluster category} In this section, we assume that $Q$ is an acyclic quiver. In this case, the combinatorics of cluster variables are encoded in the cluster category. Let $k$ be an algebraically closed field, $kQ$ the path algebra of $Q$ and ${\rm mod} \, kQ$ the category of finitely generated right $kQ$-modules. We denote by ${\rm ind}\, kQ$ a full subcategory of ${\rm mod} \, kQ$ consisting of exactly one object from each isomorphism class of indecomposable   $kQ$-modules. For $x \in Q_0$, we denote by $P_x$ the corresponding indecomposable projective $kQ$-module. For properties of ${\rm mod}\, kQ$ and its Auslander-Reiten quiver $\Gamma ({\rm mod}\, kQ)$, we  refer the reader to \cite{ARS,ASS}.

We denote by ${\cal D}^b({\rm mod}\,kQ)$ the bounded derived category over ${\rm mod} \, kQ$. This is a triangulated Krull-Schmidt category having Serre duality and hence almost split triangles. Since $kQ$ is hereditary, the Auslander-Reiten quiver $\Gamma({\cal D}^b({\rm mod}\,kQ))$ of ${\cal D}^b({\rm mod}\,kQ)$ is well-understood \cite{H}. The cluster category is defined to be the orbit category of ${\cal D}^b({\rm mod}\, kQ)$ under the action of the automorphism $\tau^{-1}[1]$, where $\tau$ is the Auslander-Reiten translation and $[1]$ is the shift of ${\cal D}^b({\rm mod}\, kQ)$, see \cite{BMRRT}. Then ${\cal C}_Q$ is also a triangulated Krull-Schmidt category having almost split triangles, and the projection functor ${\cal D}^b({\rm mod} \,kQ) \to {\cal C}_Q$ is a functor of triangulated categories commuting with the Auslander-Reiten translation \cite{K}. Moreover, ${\cal C}_Q$ is a $2$-Calabi-Yau category \cite{BMRRT}. Let ${\rm ind} \,{\cal C}_Q$ denote a full subcat!
 
 egory of ${\cal C}_Q$ consisting of exactly one object from each isomorphism class of indecomposable objects in ${\cal C}_Q$, then ${\rm ind}\, {\cal C}_Q$ can be identified with the disjoint union of ${\rm ind} \,kQ$ and $kQ[1] = \{P_x[1] | x\in Q_0\}$, the shifts of the indecomposable projective $kQ$-modules. We always use this identification in the sequel. The Auslander-Reiten quiver $\Gamma({\cal C}_Q)$ of ${\cal C}_Q$ is the quotient of $\Gamma({\cal D}^b({\rm mod}\,kQ))$ under the action of the quiver automorphism $\tau^{-1}[1]$. This Auslander-Reiten quiver has always a unique component containing all the objects of $kQ[1]$. This is the \emph{transjective} component of $\Gamma({\cal C}_Q)$ and is denoted by $\Gamma_{tr}$. If $Q$ is a Dynkin quiver, then $\Gamma({\cal C}_Q) \cong \Gamma_{tr}. $ Otherwise, $\Gamma_{tr}$ is isomorphic to the repetition quiver $\Gamma_{tr} \cong {\mathbb Z}Q$ of $Q$ (see \cite{ASS}), and there are infinitely many so-called \emph{regular}!
 
  components which are either stable tubes (if $Q$ is euclidean) or of 
type $\mathbb{ZA}_\infty$ (if $Q$ is wild). 

Let $n = |Q_0|$. There exists a map
\begin{equation*}
X_? : ({\cal C}_Q)_0 \to \mathbb Z [x_1^{\pm 1}, \dots, x_n^{\pm 1}]
\end{equation*}
called the \emph{canonical cluster character}, or the \emph{Caldero-Chapoton map}. The map $X_?$ induces a bijection between the $M$ in ${\rm ind}\, {\cal C}_Q$ which have no self-extensions, and the cluster variables $X_M$, see \cite{CK06}. Under this bijection, the clusters correspond to the so-called {\it tilting} objects (also known as cluster-tilting objects) in ${\cal C}_Q$. In practice, the map $X_?$ is difficult to compute explicitly. An easier method for computing the cluster variables is via the frieze functions \cite{AsRS,AD,ADSS}. 

\subsection{Cluster automorphisms in the acyclic case}  In this section, we prove that if $Q$ is acyclic, then  the cluster automorphisms of ${\cal A=A}(\amas, Q)$ are entirely determined by the quiver automorphisms of the transjective component $\Gamma_{tr}$.

\begin{lemma}\label{lemma 3.1}
Let $f$ be a cluster automorphism of ${\cal A}={\cal A}(\amas,Q)$, where $Q$ is acyclic.
\begin{itemize}
\item[(a)] If $f$ is direct, then it induces a triangle equivalence $f_{{\cal D}}: {\cal D}^b({\rm mod} \; kQ) \to {\cal D}^b({\rm mod} \; kQ) $.
\item[(b)] If $f$ is inverse, then it induces a triangle equivalence $f_{{\cal D}}: {\cal D}^b({\rm mod} \; kQ) \to {\cal D}^b({\rm mod} \; kQ^{op}) $.
\end{itemize}
\end{lemma}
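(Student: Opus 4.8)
The plan is to convert the combinatorial data carried by $f$ into a genuine triangle functor, working through the cluster category ${\cal C}_Q$. First I would describe the action of $f$ on objects. By Corollary \ref{cor2.7bis}, $f$ maps every cluster of $\cal A$ to a cluster, hence maps the set of all cluster variables bijectively onto itself, its inverse being the cluster automorphism $f^{-1}$. The Caldero--Chapoton map $X_?$ identifies the cluster variables with the rigid indecomposable objects of ${\cal C}_Q$, and clusters with the basic cluster-tilting objects \cite{CK06}; transporting $f$ through $X_?$ therefore yields a bijection $\hat f$ on the set of rigid indecomposables of ${\cal C}_Q$ that carries cluster-tilting objects to cluster-tilting objects. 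Since $f$ commutes with every mutation (Proposition \ref{prop1}), $\hat f$ is moreover compatible with the mutation of cluster-tilting objects.

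Next I would show that $\hat f$ restricts to an automorphism of the transjective component $\Gamma_{tr}\cong \ZZ Q$. The initial cluster $\amas$ corresponds to a cluster-tilting object $T_0$ (the shifts $P_x[1]$ of the indecomposable projectives), whose summands form a local slice (in the sense of \cite{ABS2}) sitting in $\Gamma_{tr}$. If $f$ is direct then $Q(f(\amas))\cong Q$ by Corollary \ref{lemma_property}(a), so $\hat f(T_0)$ is again a cluster-tilting object with acyclic quiver; by the theory of local slices such an object lies entirely in $\Gamma_{tr}$ and is a local slice of shape $Q$. (If $f$ is inverse one uses Corollary \ref{lemma_property}(b) and obtains a local slice of shape $Q^{\textup{op}}$.) As every transjective indecomposable is rigid and lies on some local slice, and $\hat f$ permutes local slices, it follows that $\hat f$ preserves $\Gamma_{tr}$; and because the quiver $Q(\tilde\amas)$ of a seed records the arrows of $\Gamma_{tr}$ along the corresponding slice while mutation moves one slice to an adjacent one, the mutation-compatibility of $\hat f$ forces it to preserve both the arrows and the Auslander--Reiten translation. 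Thus $\hat f$ is an automorphism of the translation quiver $\Gamma_{tr}\cong\ZZ Q$, sending the projective slice to a complete slice $\Sigma'$ isomorphic to $Q$ (direct case) or to $Q^{\textup{op}}$ (inverse case). In the Dynkin case ${\cal C}_Q=\Gamma_{tr}$ and this step is immediate.

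Finally I would lift $\Sigma'$ to the derived category. Since $Q$ is hereditary, $\Gamma_{tr}\cong\ZZ Q$ is the shape of the Auslander--Reiten quiver of ${\cal D}^b({\rm mod}\,kQ)$ \cite{H}, and the complete slice $\Sigma'$ lifts to a tilting object $T'$ of ${\cal D}^b({\rm mod}\,kQ)$ whose endomorphism algebra is the hereditary algebra $k\,Q'$ with $Q'\cong Q$ (direct) or $Q'\cong Q^{\textup{op}}$ (inverse). By Happel's theorem, such a tilting object induces a triangle equivalence $G:{\cal D}^b({\rm mod}\,kQ')\to {\cal D}^b({\rm mod}\,kQ)$. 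In the direct case $Q'\cong Q$ and we set $f_{\cal D}=G$, giving a triangle auto-equivalence of ${\cal D}^b({\rm mod}\,kQ)$; in the inverse case $Q'\cong Q^{\textup{op}}$ and we set $f_{\cal D}=G^{-1}:{\cal D}^b({\rm mod}\,kQ)\to {\cal D}^b({\rm mod}\,kQ^{\textup{op}})$. Using that the projection ${\cal D}^b({\rm mod}\,kQ)\to{\cal C}_Q$ is a triangle functor \cite{K}, one checks that $f_{\cal D}$ descends to ${\cal C}_Q$ and realizes $\hat f$, hence $f$.

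The main obstacle is precisely this last passage from the bijection $\hat f$ to an honest triangle functor. Two points demand care. First, $\hat f$ sees only the rigid objects, so the regular components of ${\cal C}_Q$ are invisible to it; one must argue that prescribing $f_{\cal D}$ on a complete slice already pins it down on all of ${\cal D}^b({\rm mod}\,kQ)$, which is exactly where the heredity of the endomorphism algebra of a \emph{local} slice is essential --- an arbitrary acyclic cluster-tilting object would have an endomorphism algebra with relations and would not lift to a tilting object. Second, one must verify the compatibility claim that the descent of $f_{\cal D}$ agrees with $f$ through $X_?$, which amounts to matching the exchange relations with the triangulated structure on the slice; establishing that $\hat f$ is a genuine translation-quiver automorphism, and not merely a bijection respecting mutation, is the crux of this verification.
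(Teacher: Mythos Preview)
Your core construction coincides with the paper's: the image $f(\amas)$ corresponds to a cluster-tilting object $M=\bigoplus_{x\in\amas}M_x$ in $\mathcal{C}_Q$ with $\textup{End}\,M\cong kQ$ (direct case) or $kQ^{\textup{op}}$ (inverse case); by \cite{ABS2} the $M_x$ form a local slice, which lifts to a slice $\Sigma$ in a transjective component of $\mathcal{D}^b(\textup{mod}\,kQ)$; the sum $\widetilde M=\bigoplus_x\widetilde{M_x}$ is then a tilting complex, and the paper simply sets $f_{\mathcal{D}}=-\otimes_{kQ}^{\mathbb{L}}\widetilde M[-1]$ and stops.

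What you add --- the global bijection $\hat f$ on rigid indecomposables, the argument that $\hat f$ is a translation-quiver automorphism of $\Gamma_{tr}$, and the verification that $f_{\mathcal{D}}$ descends to $\mathcal{C}_Q$ and realises $f$ through $X_?$ --- is not part of the paper's proof of this lemma. The translation-quiver statement is precisely the content of the \emph{next} result (the corollary immediately following Lemma~\ref{lemma 3.1}), and the compatibility with $X_?$ is only made explicit later, in the proof of Theorem~\ref{theorem_main}. So your proposal is correct but effectively merges Lemma~\ref{lemma 3.1} with its corollary; the paper's argument is shorter because it reads ``$f$ induces $f_{\mathcal D}$'' in the weak sense of ``from the data of $f$ one constructs $f_{\mathcal D}$'', deferring the finer compatibility checks you worry about in your final paragraph.
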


\begin{proof} %We prove only (a), because the proof of (b) is similar. 
Let  $X_?$ denote as before the canonical cluster character. For each $x$ in $\mathbf{x}
$, there exists a unique indecomposable object $M_{x}$ in the cluster
category $\mathcal{C}_{Q}$ such that $f(x)=X_{M_{x}}$. Because $\mathbf{x}$
is a cluster, then $M=\oplus _{x\in \mathbf{x}}M_{x}$ is a tilting object in 
$\mathcal{C}_{Q}$.
\ralf{ If $f$ is direct then $\textup{End}\, M\cong kQ$.}
 Therefore, the set $\left\{
M_{x}\mid x\in \mathbf{x}\right\} $ forms a local slice in $
\mathcal{C}_{Q}$. Due to \cite{ABS2}, there exists a slice $\Sigma $ in a
transjective component of ${\cal D}^{b}\left( \textup{mod}\,kQ\right) $, isomorphic to $%
Q$, such that, for each $x\in \mathbf{x}$, the object $M_{x}$ lifts to $%
\widetilde{M_{x}}$ in $\Sigma$. Because $\widetilde{M}=\oplus _{x\in 
\mathbf{x}}\widetilde{M_{x}}$ is a slice complex in ${\cal D}^{b}\left( \textup{mod}\,
kQ\right)$, it is also a tilting complex. Therefore the triangle functor $
f_{\mathcal{D}}=-\otimes _{kQ}^{\mathbb{L}}\widetilde{M}[-1]:\mathcal{D}^{b}\left( \textup{mod}\,kQ\right) \rightarrow \mathcal{D}^{b}\left( \textup{mod}\,kQ\right) $ is a triangle
equivalence. 
\ralf{This shows (a). 
If $f$ is inverse then   $\textup{End}\, M\cong kQ^{op}$, and again the set $\left\{
M_{x}\mid x\in \mathbf{x}\right\} $ forms a local slice in $
\mathcal{C}_{Q}$.  Again due to \cite{ABS2}, there exists a slice $\Sigma $ in a
transjective component of ${\cal D}^{b}\left( \textup{mod}\,kQ\right) $, isomorphic to $%
Q^{op}$, such that, for each $x\in \mathbf{x}$, the object $M_{x}$ lifts to $%
\widetilde{M_{x}}$ in $\Sigma$. Because $\widetilde{M}=\oplus _{x\in 
\mathbf{x}}\widetilde{M_{x}}$ is a slice complex in ${\cal D}^{b}\left( \textup{mod}\,
kQ\right)$, it is also a tilting complex. Again the triangle functor $
f_{\mathcal{D}}=-\otimes _{kQ}^{\mathbb{L}}\widetilde{M}[-1]:\mathcal{D}^{b}\left( \textup{mod}\,kQ\right) \rightarrow \mathcal{D}^{b}\left( \textup{mod}\,kQ\right) $ is a triangle
equivalence. 
}
%
%For $x\in Q_0$, we define $f_{\cal D}(P_x) = M$, where $M$ is the unique object of ${\rm ind}\, kQ \,\vee\, kQ[1]$ such that $f(x) = X_M$, where Since the indecomposable projective $kQ$-modules form a slice in $\Gamma({\cal D}^b({\rm mod} \,kQ))$, the map $f_{\cal D}$ extends in a unique way to an automorphism of ${\cal D}^b({\rm mod}\,kQ)$
%which sends $kQ[0]$ to a slice $\Sigma\cong Q$. This automorphism is clearly compatible with the functor $\tau^{-1}[1]$, and thus induces an automorphism of ${\cal C}_Q$ by passing to the orbit category. 
%
%(b) Similarly, $f_{\cal D}$ is defined on the indecomposable projective $kQ$-modules by setting $f_{\cal D}(P_x)=M$, for $x\in Q_0$, where $M$ is the unique object of ${\rm mod}\,kQ\,\vee\,kQ[1]$ such that $f(x)=X_M$. Since the new quiver is isomorphic to $Q^{op}$, this defines a contravariant functor and hence extends to an isomorphism $f_{\cal D}: {\cal D}^b({\rm mod} \; kQ) \to {\cal D}^b({\rm mod} \; kQ^{op})$ sending $kQ[0]$ to a slice $\Sigma\cong Q^{op}$. Passing to the orbit category as in (a), we get an isomorphism $f_{\cal C}: {\cal C}_Q \to {\cal C}_{Q^{\textup{op}}}$. 
 \end{proof}

%\paragraph{2.3} 

Recall  that a morphism of translation quivers is a morphism of quivers which commutes with the translation. 
 
\begin{corollary}
Let $f$ be a cluster automorphism of ${\cal A=A}(\amas,Q)$, where $Q$ is acyclic.
\begin{itemize}
\item[(a)] If $f$ is direct, then it induces a quiver automorphism of the transjective component $\Gamma_{tr}$ of  $ \Gamma({\cal C}_Q)$.
\item[(b)] If $f$ is inverse, then it induces a quiver anti-automorphism of the transjective component $\Gamma_{tr}$ of  $ \Gamma({\cal C}_Q)$.
\end{itemize}
\end{corollary}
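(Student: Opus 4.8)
The plan is to descend the derived equivalence furnished by Lemma \ref{lemma 3.1} to the cluster category and then read off its effect on the transjective component. By Lemma \ref{lemma 3.1}, a direct cluster automorphism $f$ induces a triangle equivalence $f_{\cal D}:{\cal D}^b({\rm mod}\,kQ)\to{\cal D}^b({\rm mod}\,kQ)$, whereas an inverse one induces $f_{\cal D}:{\cal D}^b({\rm mod}\,kQ)\to{\cal D}^b({\rm mod}\,kQ^{\rm op})$. The first point I would establish is that $f_{\cal D}$ descends to the cluster category. Indeed, any triangle equivalence commutes up to natural isomorphism with the shift $[1]$, and, since the Auslander--Reiten translation $\tau$ is governed by the Serre functor (which is preserved by every triangle equivalence), it also commutes with $\tau$. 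Hence $f_{\cal D}$ commutes with the autoequivalence $\tau^{-1}[1]$ defining the orbit category, and therefore passes to a triangle equivalence $\bar f:{\cal C}_Q\to{\cal C}_Q$ in the direct case and $\bar f:{\cal C}_Q\to{\cal C}_{Q^{\rm op}}$ in the inverse case. On objects, $\bar f$ sends the indecomposable $M_x$ with $X_{M_x}=f(x)$ as prescribed by $f$, so it agrees with the bijection $\varphi(p_x)=p_{f(x)}$ on vertices.

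The second step is to see that $\bar f$ induces a morphism of translation quivers preserving $\Gamma_{tr}$. A triangle equivalence carries indecomposables to indecomposables, irreducible morphisms to irreducible morphisms, and almost split triangles to almost split triangles; combined with the commutation with $\tau$ above and the fact that $\Gamma({\cal C}_Q)$ is the quotient of $\Gamma({\cal D}^b({\rm mod}\,kQ))$ by $\tau^{-1}[1]$, this shows that $\bar f$ induces an isomorphism of Auslander--Reiten quivers commuting with the translation. That this isomorphism respects the transjective component follows from the intrinsic description of $\Gamma_{tr}$: it is the unique component containing the shifts $kQ[1]$ of the indecomposable projectives (equivalently, in the non-Dynkin case the only component of the form ${\mathbb Z}Q$ rather than a tube or a component of type $\mathbb{ZA}_\infty$, and the whole quiver in the Dynkin case). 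Since all of these features are preserved by $\bar f$, it restricts to an isomorphism between transjective components.

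Assembling these gives (a) directly: when $f$ is direct, $\bar f$ is an autoequivalence of ${\cal C}_Q$, so its restriction is a quiver automorphism of $\Gamma_{tr}$. For (b), when $f$ is inverse, $\bar f$ yields an isomorphism $\Gamma_{tr}({\cal C}_Q)\cong\Gamma_{tr}({\cal C}_{Q^{\rm op}})$; composing with the canonical identification $\Gamma_{tr}({\cal C}_{Q^{\rm op}})\cong{\mathbb Z}Q^{\rm op}=({\mathbb Z}Q)^{\rm op}\cong\Gamma_{tr}({\cal C}_Q)^{\rm op}$ turns it into a quiver anti-automorphism of $\Gamma_{tr}({\cal C}_Q)$. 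The step I expect to be the main obstacle is exactly this last identification in the inverse case: one must check that the composite genuinely reverses each arrow compatibly with the vertex bijection $\varphi(p_x)=p_{f(x)}$, rather than merely relabelling vertices, and it is here that condition \textup{(CA2)}---which controls how $f$ interacts with the exchange relations and hence with the orientation of the arrows---has to be brought in to fix the direction of each arrow.
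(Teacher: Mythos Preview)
Your proposal is correct and follows essentially the same route as the paper: both arguments start from the triangle equivalence $f_{\mathcal D}$ of Lemma~\ref{lemma 3.1}, use that any triangle equivalence commutes with the shift and the Auslander--Reiten translation, and conclude that one obtains a translation-quiver automorphism of $\Gamma_{tr}$.

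The only noteworthy difference is the order of operations. The paper stays at the derived level: it observes that $f_{\mathcal D}$ carries the projective slice $\mathcal P=\{P_x\mid x\in\mathbf x\}$ to the isomorphic slice $\Sigma=\{\widetilde{M_x}\}$ inside the \emph{same} transjective component $\Gamma$ of $\Gamma(\mathcal D^b(\textup{mod}\,kQ))$, and then notes that this slice isomorphism extends \emph{uniquely} to an automorphism of $\Gamma$, which descends to $\Gamma_{tr}$ because $f_{\mathcal D}$ commutes with $\tau^{-1}[1]$. You instead first descend $f_{\mathcal D}$ to an equivalence $\bar f$ of cluster categories and then argue intrinsically that $\bar f$ must preserve the transjective component. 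The paper's ``slice extends uniquely'' step is slightly more economical (it sidesteps the need to characterise $\Gamma_{tr}$ among all components), while your route makes the passage through $\mathcal C_{Q^{\mathrm{op}}}$ in case~(b) more transparent---the paper only says that~(b) ``is similar''. Your caveat about the final identification in~(b) is well placed, but note that the identification $\Gamma_{tr}(\mathcal C_{Q^{\mathrm{op}}})\cong\mathbb ZQ^{\mathrm{op}}$ is literally valid only in the representation-infinite case; in the Dynkin case one should rather argue directly that the slice $\Sigma$ has underlying quiver $Q^{\mathrm{op}}$, which already forces the induced map on $\Gamma_{tr}$ to reverse arrows.
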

\begin{proof}
We only prove (a), because the proof of (b) is similar. Let $f$ be
direct. As seen in Lemma \ref{lemma 3.1}, it induces a triangle equivalence $f_{\mathcal{
D}}:\mathcal{D}^{b}\left( \textup{mod}\,kQ\right) \rightarrow\mathcal{D}^{b}\left( \textup{mod}\,
kQ\right) $ mapping the slice $\mathcal{P}=\left\{ P_{x}\mid
x\in \mathbf{x}\right\} $ consisting of the indecomposable projective 
$kQ$-modules to the isomorphic slice 
$\Sigma =\left\{ \widetilde{M_{x}}\mid x\in \mathbf{x}\right\}$, 
and both slices may be assumed to lie in
the same transjective component $\Gamma $ of $\Gamma \left( \mathcal{D}^{b}\left( 
\textup{mod}\,kQ\right) \right)$. Clearly, $f_{\mathcal{D}}\left( P_{x}\right) =%
\widetilde{M_{x}}$, and moreover $f_{\mathcal{D}}$ induces a quiver
isomorphism $\mathcal{P}\cong \Sigma ,$ which extends uniquely to a
quiver automorphism of $\Gamma$.
\ralf{On the other hand, $f_{\mathcal{D}}$ 
is a triangle equivalence, hence commutes with the shift of $ \mathcal{D}^{b}\left( 
\textup{mod}\,kQ\right)$ and the 
Auslander-Reiten translation of $\Gamma$. Therefore it induces an automorphism of the 
translation quiver $\Gamma_{tr}$. }
%
%
% Because $f_{\mathcal{D}}$ is a triangle
%functor, this automorphism commutes with the shift $[1]$. In order to show
%that it induces an automorphism of the tranjective component $\Gamma_{tr}$ of
% $\Gamma \left( \mathcal{C}_{Q}\right)$, it suffices to prove that it commutes with the
%Auslander-Reiten translation. Indeed, let $P_{x}$ be a simple
%projective module. Then we have an almost split triangle 
%\[P_{x}\rightarrow \oplus _{y\rightarrow x}P_{y}\rightarrow \tau ^{-1}P_{x}
%\rightarrow P_{x}[1].\] 
% Because $f_{\mathcal{D}}$ is a triangle
%functor, we have a triangle 
%\[f_{\mathcal{D}}P_{x}\rightarrow \oplus
%_{y\rightarrow x}f_{\mathcal{D}}P_{y}\rightarrow f_{\mathcal{D}}\tau
%^{-1}P_{x}\rightarrow f_{\mathcal{D}}P_{x}[1].\] On the
%other hand, we have an almost split triangle 
%\[\widetilde{M_{x}}
%\rightarrow \oplus _{y\rightarrow x}\widetilde{M_{y}}\rightarrow 
%\tau ^{-1}\widetilde{M_{x}}\rightarrow 
%\widetilde{M_{x}}[1]. \]
%Therefore, $f_{\mathcal{D}}\tau ^{-1}P_{x}\cong \tau ^{-1}
%\widetilde{M_{x}}\cong \tau ^{-1}f_{\mathcal{D}}P_{x}$.
%Moreover, we have an isomorphim between the spaces of irreducible morphisms
%$\textup{Irr}\left( \widetilde{M_{y}},\widetilde{M_{x}}\right) \cong  \textup{Irr}
%\left( P_{y},P_{x}\right) $ for each $y$ such that $y\rightarrow x.$
%We continue by induction on sources, replacing the slice $\mathcal{P}$ by $%
%\left( \mathcal{P}\setminus\left\{ P_{x}\right\} \right) \cup \left\{ \tau
%^{-1}P_{x}\right\}$ .
\end{proof}

\begin{remark}\label{rem 3.3} As a direct consequence of the above proofs, if $f$
is nontrivial, then it acts nontrivially on the transjective component.
\end{remark}

\begin{example}
{{To illustrate Remark \ref{rem 3.3} we give an example of a cluster automorphism defined by permuting two regular components of the cluster category and show that this automorphism induces a nontrivial action on the transjective component. }

{ Let $(\amas,Q)$ be the seed with cluster $\amas=\{x_1,x_2,x_3,x_4\}$ and quiver  $$\xymatrix@R=10pt{&2\ar[ld]\\1\ar@<2pt>[rr]\ar@<-2pt>[rr] &&4\ar[ul]\ar[dl]\ . \\ &3\ar[lu]}$$
This is a seed of type $\tilde {\mathbb{A}}_{2,2}$: indeed, mutating the seed in $x_2$ and then in $x_3$ one gets a new seed with cluster $\amas'=\{x_1,\frac{x_1+x_4}{x_2},\frac{x_1+x_4}{x_3},x_4\}$ and quiver  $$\xymatrix@R=10pt{&2\ar[rd]\\1\ar[ru]\ar[rd] &&4\ . \\ &3\ar[ru]}$$
}

{In the corresponding cluster category, each of the objects associated with $x_2$ and $x_3$ sits in the mouth of a tube of rank 2. Therefore permuting the two tubes induces a cluster automorphism given on the cluster $\amas$ by the permutation of the two variables $x_2 $ and $x_3$. 
On the cluster $\amas'$ this automorphism acts by permutation of the variables $\frac{x_1+x_4}{x_2}$ and $\frac{x_1+x_4}{x_3}$. Therefore the induced action on the transjective component is nontrivial.}}
\end{example}
%\paragraph{2.4}

 We now relate cluster automorphisms to automorphisms of the original quiver. We call a mutation an \emph{APR-mutation} if it is applied to a source or to a sink. The letters APR stand for Auslander, Platzeck and Reiten and evoke the similarity between such mutations and the so-called APR-tilts \cite{APR}.  Equivalently, one may think of
APR-mutations as a generalisation of the reflection functors of Bernstein,
Gelfand and Ponomarev \cite{BGP}. We also note that, given an APR-mutation, the
new cluster variable is obtained from the old ones by using the frieze
function.

For our purposes, it is useful to understand how an APR-mutation translates
into terms of the cluster category. Let $\mathcal{A}\left( \mathbf{x},Q\right) 
$ be a cluster algebra, with $Q$ acyclic, and $\mu _{x}$ be an APR-mutation
corresponding to a source (say) in $Q$. Then $\mu _{x}$ maps $\mathcal{A}%
\left( \mathbf{x},Q\right) $ to $\mathcal{A}\left( \mu _{x}\mathbf{x},\mu
_{x}Q\right)$. Identifying $Q$ with the full subquiver $kQ[1]=\left\{
P_{x}[1]\mid x\in \mathbf{x}\right\} $ of the transjective
component $\Gamma _{tr}$ of $\Gamma \left( \mathcal{C}_{Q}\right) $, the
application of $\mu _{x}$ to $kQ[1]$ clearly amounts to replacing $kQ[1]$ by
the new tilting object $(kQ[1]\setminus \left\{ P_{x}[1]\right\} )\cup
\left\{ P_{x}\right\} $ whose quiver is also a slice in $\Gamma _{tr}$ ,
though not isomorphic to $Q$.

 \begin{lemma}
\label{lemma_APR}
Let $f$ be a cluster automorphism of ${\cal A} (\amas,Q)$, where $Q$ is acyclic. Then there exists a sequence $\mu$ of APR mutations  such that $\mu(\amas)=f(\amas)$ \ralf{as sets}.
\end{lemma}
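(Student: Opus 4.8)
The plan is to work entirely inside the transjective component $\Gamma_{tr}$ of $\Gamma(\mathcal{C}_Q)$, using the correspondence between cluster variables in $f(\amas)$ and indecomposable objects established in Lemma~\ref{lemma 3.1} and its corollary. By that lemma, the set $\{M_x \mid x\in\amas\}$, where $f(x)=X_{M_x}$, forms a local slice $\Sigma$ in $\mathcal{C}_Q$, and since $f$ is a cluster automorphism, the induced quiver isomorphism shows that the quiver of $\Sigma$ is isomorphic to $Q$ (if $f$ is direct) or $Q^{\textup{op}}$ (if $f$ is inverse). The goal is to move the initial slice $\mathcal{P}=\{P_x\mid x\in\amas\}$, identified with $kQ[1]$ in $\Gamma_{tr}$, to the slice $\Sigma$ by a sequence of APR-mutations, which we have already interpreted as the operation of replacing a slice by an adjacent one by removing a source/sink object and inserting its translate.

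The first step is to recall the combinatorial picture: in $\Gamma_{tr}\cong\mathbb{Z}Q$, any two slices are related by a finite sequence of elementary ``slice moves,'' where each elementary move takes a source (or sink) vertex of the current slice and replaces the corresponding object by the neighbouring vertex in the $\mathbb{Z}Q$-direction. I would first establish, purely combinatorially, that any local slice $\Sigma$ in $\Gamma_{tr}$ can be reached from the reference slice $kQ[1]$ by such elementary moves. This is a standard fact about slices in repetition quivers: one measures the ``distance'' of $\Sigma$ from $kQ[1]$ (for instance by the maximal displacement along the $\mathbb{Z}$-coordinate of $\mathbb{Z}Q$ over all vertices of the slice) and shows that if $\Sigma\neq kQ[1]$ then there is always a source or sink of the current slice that can be moved strictly closer, so the process terminates.

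The second step is to translate each elementary slice move into an APR-mutation on the cluster side. Here I would invoke precisely the paragraph preceding the lemma: applying $\mu_x$ to a source (or sink) of the current slice amounts, under the identification with $\Gamma_{tr}$, to replacing the object $P_x[1]$ by the adjacent object, i.e.\ to exactly one elementary slice move. Running the combinatorial sequence from Step~1 therefore produces a sequence $\mu$ of APR-mutations carrying the reference slice to $\Sigma$. Since the bijection $X_?$ between rigid indecomposables and cluster variables sends slices to clusters, the slice $\Sigma$ corresponds to the cluster $f(\amas)$, and thus $\mu(\amas)=f(\amas)$ as sets. Note that the statement only claims equality as \emph{sets}, not as seeds with the bijection to points, so I need not track how $\mu$ acts on the labelling, only that the underlying collections of cluster variables coincide.

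The main obstacle I anticipate is the termination/reachability argument in Step~1: one must be careful that at each stage a source or sink of the current slice really does admit an APR-mutation (rather than a general mutation), and that one can always decrease the chosen distance measure without getting stuck. The acyclicity of $Q$ is essential here, since it guarantees the reference slice $kQ[1]$ is genuinely a slice in the transjective component and that $\Gamma_{tr}\cong\mathbb{Z}Q$, so that the slice geometry is well controlled; for Dynkin type one must additionally check the argument still works when $\Gamma(\mathcal{C}_Q)=\Gamma_{tr}$ is finite, which only makes the termination easier. A secondary subtlety is that in the inverse case the target slice has quiver $Q^{\textup{op}}$, but this does not affect the argument, since APR-mutations moving between adjacent slices are defined intrinsically in $\Gamma_{tr}$ and do not require the source and target slices to have isomorphic quivers.
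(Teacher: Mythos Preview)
Your proposal is correct and follows essentially the same route as the paper. Both arguments first establish that $f(\amas)$ corresponds to a local slice in $\Gamma_{tr}$ and then use that any two local slices are connected by elementary source/sink moves, i.e.\ APR-mutations; the paper obtains the local slice property by noting that $Q(f(\amas))$ is acyclic so $\textup{End}_{\mathcal{C}_Q}T$ is hereditary and citing \cite[Corollary~20]{ABS2} directly, whereas you invoke the already-proved Lemma~\ref{lemma 3.1} (whose proof uses the same reference), and you spell out the slice-reachability step that the paper compresses into the single word ``hence.''
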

 
\begin{proof} Let $\amas=\{x_1,\ldots,x_n\}$. If $f\in \Aut$, then  $(f(\amas), Q(f(\amas)) $ is a seed such that $Q(f(\amas))$ is isomorphic to $Q$ or $Q^{\textup{op}}$.
 In particular,  $Q(f(\amas))$  is acyclic. 
 Therefore, the cluster $f(\amas)$ corresponds to a tilting object $T=\oplus_{i=1}^n T_i$ in ${\cal C}_Q$ such that the indecomposable summand $T_i$ corresponds to the variable $f(x_i)$ for all $i$ such that $1\le i\le n$.
 Then  $Q(f(\amas))$ is the ordinary quiver of the cluster-tilted algebra ${\rm End_{{\cal C}_Q}}T$, and since  $Q(f(\amas))$ is acyclic, it follows that ${\rm End_{{\cal C}_Q}}T$ is hereditary. Therefore, $T$ is a local slice in the transjective component $\Gamma_{tr}$ of $\Gamma({\cal C}_Q)$, see \cite[Corollary 20]{ABS2}, and hence there exists a sequence $\mu$ of APR-mutations such that $\mu(\amas)=f(\amas)$ \ralf{as sets. Observe that if one lifts the slice $(kQ[1]\setminus \{P_x [1]\}) \cup \{P_x \}$ 
to a slice $\Sigma$, say, in the derived category, then the endomorphism algebra of $\Sigma$ 
is obtained from $kQ$ by an APR-tilt (see \cite{APR}).} \end{proof}

\begin{remark} Lemma \ref{lemma_APR} states that the action of a cluster automorphism on a cluster is given by a composition of APR-mutations.
However, it is not true in general that the automorphism $f$ itself is given by a sequence of APR-mutations, as we show in the following example.
\end{remark}
\begin{example}\label{ex atilde}
Let $Q$ be the quiver $
\xymatrix{p_{x_1}\ar@/^10pt/[rr]\ar[r]&p_{x_2}\ar[r]&p_{x_3}}$. The transjective component $\zG_{tr}$ of the Auslander-Reiten is the infinite quiver illustrated in Figure \ref{fig atilde}. The position of the initial cluster $\{x_1,x_2,x_3\}$ as well as one other variable $u$ are depicted in the figure. 
There is a direct cluster automorphism $f\in\aut$ defined by $f(x_1)=x_2, f(x_2)=x_3$ and $f(x_3)=u$. Note that $f$ induces an isomorphism of quivers $Q(\amas)\cong Q(f(\amas))$.
The corresponding APR-mutation $\mu$ of Lemma \ref{lemma_APR} is the mutation in $x_1$. We have $\mu_{x_1}(\amas)=\{u,x_2,x_3\}=f(\amas)$. 
But the mutation $\mu$ fixes the variables $x_2,x_3$ and sends $x_1$ to $u$, which shows that the automorphism $f$ is not given by $\mu$. 
Also note that $\mu$ sends the sink $p_{x_3}$ in the quiver $Q$ to a point $p_{x_3}$ of the quiver $Q(f(\amas))$ which is neither a source nor a sink, whence  $\mu$ does not induce a quiver isomorphism between $Q$ and $Q(f(\amas))$ or between $Q^{\textup{op}}$ and $Q(f(\amas))$; consequently, $\mu$ does not induce a cluster automorphism. 
\begin{figure}
\[ 
\xymatrix@R=10pt@C=10pt{
&&&\cdot\ar[rrd]\ar@/_5pt/[rdd]&&&x_3\ar[rrd]\ar@/_5pt/[rdd]&&&\cdot\ar[rrd]\ar@/_5pt/[rdd]&&&\cdot\ar@/_5pt/[rdd]&\\
\cdots&&\cdot\ar[ru]\ar[rrd]&&&x_2\ar[ru]\ar[rrd]&&&\cdot\ar[ru]\ar[rrd]&&&\cdot\ar[ru]\ar[rrd]&&\cdots\\
&\cdot\ar[ru]\ar@/^12pt/[rruu]&&&x_1\ar[ru]\ar@/^12pt/[rruu]&&&u\ar[ru]\ar@/^12pt/[rruu]&&&\cdot\ar[ru]\ar@/^12pt/[rruu]&&&
}
\]
\caption{ $\zG_{tr}$ in Example \ref{ex atilde}} \label{fig atilde}
\end{figure}
\end{example}

We have seen in Examples \ref{ex_mutation} and \ref{ex atilde} that not every sequence of
APR-mutations is a cluster automorphism. As follows from Lemma \ref{lemma1}, if a
sequence of mutations $\mu$ transforms $Q$ into itself or its opposite $Q^{op}$ in such a way that the bijection $\mu: Q_0 \to \mu(Q)_0$ extends to
an isomorphism of quivers of the form either $\mu: Q \to \mu(Q)$ or $\mu:
Q^{op} \to \mu(Q)$, then $\mu$ induces a cluster automorphism $f_{\mu }\in 
\textup{Aut}\, \mathcal{A}\left( \mathbf{x},Q\right) $ defined on the
initial cluster by $f_{\mu }\left( x_{i}\right) =\mu \left( x_{i}\right) .$
Note that this applies also to quivers which are not
acyclic.

Now we are in a position to prove the first main theorem of this section.

\begin{theorem}
\label{theorem_main}
The map $\phi: {\rm Aut}(\Gamma_{tr}) \to \Autp$ defined  by $\phi(g)(x_i) = X_{g(P_i[1])}$ for $g\in {\rm Aut}(\Gamma_{tr}) $ and $1\leq i\leq n,$ is a surjective group homomorphism, whose kernel equals the stabiliser ${\rm Stab}(\Gamma_{tr})_0$ of the points in $\Gamma_{tr}$. 
\end{theorem}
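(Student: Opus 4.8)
The plan is to establish the four assertions of the statement in turn: that $\phi$ is well defined (takes values in $\Autp$), that it is a group homomorphism, that it is surjective, and that $\ker\phi=\textup{Stab}(\Gamma_{tr})_0$. Throughout I will use that the shifted projectives $\{P_i[1]\}$ form the complete slice of $\Gamma_{tr}$ attached to the initial seed, with $X_{P_i[1]}=x_i$, and that by \cite{CK06} the character $X_?$ restricts to a bijection between the (automatically rigid) transjective objects and their cluster variables. For \emph{well-definedness}, given $g\in\textup{Aut}(\Gamma_{tr})$ I would note that $g$ carries the slice $\{P_i[1]\}$ to the slice $\{g(P_i[1])\}$ of the same component, isomorphic to it as a quiver via $g$. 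By the local slice theory of \cite{ABS2}, $\bigoplus_i g(P_i[1])$ is a local slice, so its endomorphism algebra is hereditary and the quiver $Q(\phi(g)(\amas))$ of the cluster $\{X_{g(P_i[1])}\}$ equals the slice quiver, hence is isomorphic to $Q$ via the bijection $p_{x_i}\mapsto p_{\phi(g)(x_i)}$ induced by $g$. Condition (a) of Lemma~\ref{lemma1} then shows $\phi(g)\in\Autp$.

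For the \emph{homomorphism} property, the crux is the identity $\phi(g)(X_M)=X_{g(M)}$ for every transjective $M$; equivalently, the automorphism of $\Gamma_{tr}$ \emph{induced} by $\phi(g)$ (via the corollary following Lemma~\ref{lemma 3.1}) is $g$ itself. Writing $\overline{\phi(g)}$ for that induced automorphism, it satisfies $\phi(g)(X_M)=X_{\overline{\phi(g)}(M)}$, and injectivity of $X_?$ gives $\overline{\phi(g)}(P_i[1])=g(P_i[1])$, so the two automorphisms agree on the slice. Since both commute with the Auslander--Reiten translation $\tau$ and every vertex of $\Gamma_{tr}$ lies in the $\tau$-orbit of a slice vertex, this agreement propagates to all of $\Gamma_{tr}$, giving $\overline{\phi(g)}=g$. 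Granting this, I would conclude with the one-line computation
\[\phi(g_1)\phi(g_2)(x_i)=\phi(g_1)\bigl(X_{g_2(P_i[1])}\bigr)=X_{g_1g_2(P_i[1])}=\phi(g_1g_2)(x_i),\]
using that a direct cluster automorphism is determined by its values on the initial cluster.

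For \emph{surjectivity} I would take $f\in\Autp$, let $\bar f\in\textup{Aut}(\Gamma_{tr})$ be the automorphism it induces (same corollary), so that $f(X_M)=X_{\bar f(M)}$; then $f(x_i)=X_{\bar f(P_i[1])}=\phi(\bar f)(x_i)$, whence $f=\phi(\bar f)$. For the \emph{kernel}, $g\in\ker\phi$ forces $X_{g(P_i[1])}=X_{P_i[1]}$ for all $i$, so $g$ fixes the slice pointwise by injectivity of $X_?$; propagating along $\tau$ as above shows $g$ fixes every point of $\Gamma_{tr}$, i.e.\ $g\in\textup{Stab}(\Gamma_{tr})_0$, while the reverse inclusion is clear since $X_?$ depends only on the object and not on the arrows.

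I expect the delicate step to be the propagation principle ``agreement on a complete slice implies agreement on all of $\Gamma_{tr}$,'' which is invoked both for the homomorphism property and for the kernel. It rests on the relevant quiver automorphisms respecting the translation $\tau$, that is, on $\tau$ being recoverable from the mesh structure of $\Gamma_{tr}\cong\mathbb{Z}Q$; this is precisely the feature that requires separate care in the more symmetric situations (such as type $\tilde{\mathbb A}$) treated later. Everything else reduces cleanly to the bijection $X_?$ of \cite{CK06} and to the slice and APR-mutation picture already set up in Lemma~\ref{lemma_APR} and the corollary following Lemma~\ref{lemma 3.1}.
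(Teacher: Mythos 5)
Your architecture tracks the paper's proof closely: well-definedness via local slices and Lemma~\ref{lemma1}(a), the kernel via agreement on the initial slice propagated along $\tau$ (the paper argues the same way: $g$ and $g'$ coincide on the initial slice, therefore on every indecomposable), and surjectivity via the slice correspondence $P_i[1]\mapsto M_i$ extended to an automorphism of $\zG_{tr}$. Those three parts are sound, and note that for the kernel and for surjectivity you only ever need the slice-level identity $\phi(g)(x_i)=X_{\overline{\phi(g)}(P_i[1])}$, which the corollary following Lemma~\ref{lemma 3.1} does give you, since by construction the induced automorphism extends $P_i[1]\mapsto M_i$ with $X_{M_i}=\phi(g)(x_i)$. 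The genuine gap is in the homomorphism step, at the sentence asserting that the induced automorphism ``satisfies $\phi(g)(X_M)=X_{\overline{\phi(g)}(M)}$'' for every transjective $M$. The corollary does not provide this: it only produces a translation-quiver automorphism agreeing with $\phi(g)$ on the initial slice; it says nothing about how the algebra map $\phi(g)$ acts on the cluster variables $X_M$ for $M$ off that slice. The statement for all $M$ --- that $\phi(g)$ acts on every transjective cluster variable through \emph{some} automorphism of $\zG_{tr}$ --- is exactly the hard content of the homomorphism property; once it is granted, injectivity of $X_?$, propagation, and your one-line computation are formal. So the load-bearing step of your argument is assumed rather than proved, with a citation that does not support it. (Also, the step you flag as delicate, propagation from a slice, is in fact the easy half: by the paper's conventions elements of $\mathrm{Aut}(\zG_{tr})$ commute with $\tau$, and $\overline{\phi(g)}$ commutes with $\tau$ because it comes from a triangle equivalence, so agreement on a slice propagates unconditionally, including in type $\tilde{\mathbb{A}}$.)

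This missing identity is precisely where the paper does its real work, by a different device: in part (b) of its proof it compares Laurent expansions relative to the slices $kQ[1]$, $g_2kQ[1]$ and $g_1g_2kQ[1]$, using that an automorphism of $\zG_{tr}$ preserves relative position with respect to a slice, so that the same Laurent polynomials $L_{1,i}$, $L_{2,i}$ compute both $X_{g_1g_2(P_i[1])}$ and $\phi(g_1)\bigl(X_{g_2(P_i[1])}\bigr)$. If you prefer to keep your formulation, you can close the gap by proving your claim as a lemma, by induction along APR-mutations: every vertex of $\zG_{tr}$ lies on a local slice obtained from $kQ[1]$ by finitely many APR-mutations (cf.\ Lemma~\ref{lemma_APR} and the discussion preceding it); $\phi(g)$ satisfies (CA2) for every seed by Proposition~\ref{prop1}, and the cluster character commutes with mutations, so if $\phi(g)(X_N)=X_{g(N)}$ holds for all $N$ on a given slice, it persists under one APR-mutation of that slice, hence holds on all of $\zG_{tr}$. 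With that lemma in place, the rest of your proof goes through as written.
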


\begin{proof} (a) First we prove that the map $\phi$ is well defined. Let $f=\phi(g)$. Now, since $g\in {\rm Aut} (\Gamma_{tr})$, then the local slice $g(kQ[1])$ of $\Gamma_{tr}$ has underlying quiver isomorphic to $Q$. Therefore $(f(\amas),Q(f(\amas))$  is a seed with  quiver isomorphic to $Q$. This shows that $f$ satisfies the condition (CA1). 
Moreover, $f$ satisfies the condition (CA2) because $g$ is an automorphism of $\zG_{tr}$, and the canonical cluster character map commutes with mutations.
This proves that $f$ is a cluster automorphism.
Since $g$ is an automorphism of $\zG_{tr}$, and so preserves the orientation of the arrows between the $P_i[1]$, then $f$ is a direct automorphism.
%Due to Lemma \ref{lemma_APR}, there exists a sequence $\mu$ of APR mutations transforming $(\amas,Q)$ to $(f(\amas),Q(f(\amas))$ and {\bf an automorphism\marginpar{PROBLEM} $\sigma$ such that $f=f_\mu \sigma$ as mappings from $\cal A$ to $\cal A$.} Here $f_\mu$ is the automorphism defined in Lemma \ref{lemma_mut}. As is easy to see, $\sigma\in\Autp$. Therefore $f\in\Autp$, since $f_\mu\in\Autp$.

(b) We now prove that $\phi$ is a group homomorphism. Let $g_1,\, g_2 \in {\rm Aut}\, \Gamma_{tr}$, then the following equalities hold:
\begin{eqnarray*}
\phi(g_1 g_2)(x_i) = X_{g_1g_2(P_i[1])}, \\
\phi(g_1)\phi(g_2)(x_i) = \phi(g_1)(X_{g_2(P_i[1])})
\end{eqnarray*}
with $1\leq i\leq n.$ We need to establish the equality of these two expressions. Note that $X_{g_2(P_i[1])}$ is a Laurent polynomial which we denote by $L_{2,i}(x_1,\dots,x_n)$ and, similarly, $X_{g_1(P_i[1])}$ is a Laurent polynomial which we denote by $L_{1,i}(x_1,\dots,x_n)$. 

It follows directly from the definition of a morphism of translation quivers that $g_1g_2kQ[1]$ is a local slice with quiver isomorphic to $Q$. Thus, denoting by $g_2(i)$ the image in $g_2kQ[1]$ of the point $P_i[1]$, we get
\begin{equation*}
X_{g_1g_2(P_i[1])} = L_{1,g_2(i)}(  L_{2,1}(x_1,\dots,x_n)     ,\dots, L_{2,n}(x_1,\dots,x_n)).
\end{equation*}
On the other hand, $g_2kQ[1]$ is also a local slice with quiver isomorphic to $Q$, therefore
\begin{equation*}
\phi(g_1)X_{g_2(P_i[1])} = L_{1,g_2(i)}(  L_{2,1}(x_1,\dots,x_n)     ,\dots, L_{2,n}(x_1,\dots,x_n)),
\end{equation*}
which completes the proof. 

(c) We now prove that the kernel of $\phi$ equals ${\rm Stab}(\Gamma_{tr})_0$. Assume that $g, \,g^\prime \in {\rm Aut}\,\Gamma_{tr}$  are such that $\phi(g)=\phi(g^\prime)$. By definition of $\phi$, this means that $g(P_i[1]) = g^\prime(P_i[1])$, for every $i$. Thus $g$ and $g^\prime$ coincide on the initial slice. Therefore $g(M)=g^\prime(M)$ for every indecomposable object $M$ of ${\cal C}_Q$, that is, $g^{-1}g^\prime \in {\rm Stab}(\Gamma_{tr})_0$. 

(d) To show that $\phi$ is surjective, let $f\in\Autp$, where ${\cal A=A}(\amas,Q)$. Then $Q(f(\amas))\cong Q$. Let $M_i$ be an object in ${\rm ind}\, {\cal C}_Q$ such that $X_{M_i} = f(x_i) \in f(\amas)$ with $1\leq i\leq n.$ Due to the isomorphism $Q\cong Q(f(\amas)),$ the $M_i$'s form a local slice in the category ${\cal C}_Q$ and, in particular, $M_i$ lies in the transjective component of ${\cal C}_Q$.  The correspondence $P_i[1] \mapsto M_i$ extends to an automorphism $g$ of $\Gamma_{tr}$ (actually to an automorphism of ${\cal C}_Q$) and we get  $\phi(g)(x_i) = X_{g(P_i[1])} = X_{M_i} = f(x_i)$ for each $i.$ Therefore $\phi(g)=f.$
 \end{proof}

\begin{remark}\label{rem 3.11}
There is always a distinguished automorphism of the
transjective component, induced by the Auslander-Reiten translation $\tau $.
If $Q$ is a Dynkin quiver, then $\tau $ is periodic so this automorphism is
of finite order. If $Q$ is a representation-infinite quiver, then it is of
infinite order and $\mathbb{Z}$ is a subgroup of Aut$^{+}\mathcal{A}$.
\end{remark}

%Recall that the cluster algebra $\mathcal{A}(\amas,Q)$ is said to be of tree type if the quiver $Q$ is mutation equivalent to a tree.
\begin{lemma}
\label{lemma_reflection}  Let $\mathcal{A}$ be a cluster algebra with a seed $(\amas,Q)$ such that $Q$ is a tree. Then
there exists an anti-automorphism $\sigma$ of  $\Gamma_{tr}$ given by a reflection with respect to a vertical axis. Moreover, we have $\sigma^2=1$ and $\sigma \tau^m \sigma = \tau^{-m}$ for any  $m \in {\mathbb Z}$, where $\tau$ stands for the Auslander-Reiten translation on $\Gamma_{tr}$. 
\end{lemma}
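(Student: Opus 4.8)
The plan is to reduce to the combinatorial model of the transjective component and then write down the reflection explicitly. Since $Q$ is a tree it is in particular acyclic, so by the description of $\Gamma_{tr}$ recalled above it is isomorphic to the repetition quiver $\ZZ Q$ when $Q$ is representation-infinite, and to the $\tau$-periodic quotient $\Gamma(\mathcal{C}_Q)$ of $\ZZ Q$ when $Q$ is Dynkin. Recall that $\ZZ Q$ has vertices $(n,x)$ with $n\in\ZZ$ and $x\in Q_0$, together with two families of arrows — the ``slice'' arrows $(n,x)\to(n,y)$ and the ``connecting'' arrows $(n,y)\to(n+1,x)$, one of each for every arrow $\alpha\colon x\to y$ of $Q$ — and translation $\tau(n,x)=(n-1,x)$. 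I would first construct $\sigma$ on $\ZZ Q$ and then, in the Dynkin case, check that it descends to $\Gamma_{tr}\cong\ZZ Q/\langle\tau^{-1}[1]\rangle$ because it normalises the cyclic group generated by $\tau^{-1}[1]$.

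The essential point, and the only place the tree hypothesis enters, is the existence of a height function $d\colon Q_0\to\ZZ$ with $d(t(\alpha))=d(s(\alpha))+1$ for every arrow $\alpha$. Since $Q$ is a tree there is a unique reduced walk between any two vertices, so fixing a base point and summing $+1$ along forward arrows and $-1$ along backward arrows gives a well-defined such $d$; for a quiver whose underlying graph contains a cycle the monodromy around that cycle obstructs this, which is precisely why the statement is restricted to trees. Using $d$, I would define $\sigma\colon\ZZ Q\to\ZZ Q$ on vertices by $\sigma(n,x)=(C-d(x)-n,\,x)$ for a fixed $C\in\ZZ$. In the standard planar drawing of $\ZZ Q$, where each $\tau$-orbit is a horizontal line at height $-d(x)$ and the horizontal coordinate grows with $n$ and is offset by $d(x)$, this map is exactly the reflection across a vertical line, so it has the geometric form demanded by the statement.

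It then remains to check that $\sigma$ is an anti-automorphism satisfying the two relations, all of which are direct computations from the formula. For a slice arrow $(n,x)\to(n,y)$ (so $d(y)=d(x)+1$) one finds that $\sigma$ sends its endpoints to $(C-d(x)-1-n,y)$ and $(C-d(x)-n,x)$, which are the source and target of the connecting arrow $(C-d(x)-1-n,y)\to(C-d(x)-n,x)$; symmetrically $\sigma$ carries each connecting arrow to a reversed slice arrow. Thus $\sigma$ interchanges the two arrow families and reverses orientation, that is, it is a quiver anti-automorphism. Finally $\sigma^2(n,x)=(C-d(x)-(C-d(x)-n),x)=(n,x)$ gives $\sigma^2=1$, while $\sigma\tau(n,x)=(C-d(x)-n+1,x)=\tau^{-1}\sigma(n,x)$ gives $\sigma\tau\sigma^{-1}=\tau^{-1}$, whence $\sigma\tau^m\sigma=\tau^{-m}$ for all $m\in\ZZ$.

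I expect the main obstacle to be not the explicit verification, which is mechanical, but the two structural points surrounding it: establishing the height function $d$, which crystallises the role of the tree hypothesis, and, in the Dynkin case, confirming that $\sigma$ is compatible with the identification $\Gamma_{tr}\cong\ZZ Q/\langle\tau^{-1}[1]\rangle$ so that it genuinely descends to the finite transjective component. Matching the combinatorial involution with an honest reflection about a vertical axis, as phrased in the statement, is then only a matter of fixing the standard coordinates as above.
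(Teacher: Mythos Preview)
Your argument is correct and is essentially a coordinatised version of the paper's proof. The paper fixes a point $M$ in $\Gamma\cong\ZZ Q$, builds the two slices $\Sigma^{+}$ and $\Sigma^{-}$ having $M$ as unique source and sink respectively, observes that the $\tau$-orbit bijection $\Sigma^{+}\leftrightarrow\Sigma^{-}$ extends to an anti-isomorphism (this is where the tree hypothesis is invoked, with the same content as your monodromy remark), and then propagates it to all of $\Gamma$. Your height function $d$ is exactly the datum that records the position of each vertex inside such a slice, so your formula $\sigma(n,x)=(C-d(x)-n,x)$ is the same reflection written in coordinates; the advantage of your presentation is that the verifications of ``anti-automorphism'', $\sigma^{2}=1$ and $\sigma\tau\sigma=\tau^{-1}$ become one-line computations rather than appeals to the structure of $\ZZ Q$. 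On the Dynkin descent you are at the same level of rigour as the paper, which simply asserts that $\sigma_{\mathcal{D}}$ is ``clearly compatible'' with $\tau^{-1}[1]$; your phrasing in terms of normalising the cyclic group $\langle\tau^{-1}[1]\rangle$ is the right statement to check, and follows once one knows that on $\ZZ Q$ the shift $[1]$ is a composite of a power of $\tau$ with a graph automorphism of $Q$, both of which your $\sigma$ conjugates to their inverses.
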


\begin{proof} We may assume without loss of generality that the quiver $Q$ of the initial seed is a tree.
 Let $M$ be an arbitrary point in a transjective component $\Gamma $
of $\Gamma \left( \mathcal{D}^{b}\left( \textup{mod}\,kQ\right) \right)$. We
recall that $\Gamma \cong \mathbb{Z}Q$ and we assume fixed a polarisation of 
$\Gamma $ (see \cite[p. 131]{ASS}). We define a reflection $\sigma _{\mathcal{D}}$
"along the vertical axis passing through $M$" in the following way.

There exists a unique slice $\Sigma ^{+}$ in $\Gamma $ having $M$ as its
unique source. This slice is the full subquiver of $\Gamma $ consisting of
all the points $N$ in $\Gamma $ such that there exists a path from $M$ to $N$
and every such path is sectional. Dually, one constructs the unique slice $%
\Sigma ^{-}$ in $\Gamma $ having $M$ as its unique sink.  Now there exists
an obvious bijection between the sets of points of $\Sigma ^{+}$ and $\Sigma
^{-}$, mapping each point of $\Sigma ^{+}$ to the unique point in $\Sigma
^{-}$ lying in the same $\tau$-orbit. Since $Q$ is a tree, the very definition of $\mathbb{Z}%
Q$ implies that this bijection first extends to an anti-isomorphism between $\Sigma ^{+}$
and $\Sigma ^{-}$ and then to an anti-automorphism $\sigma _{\mathcal{D}}$
of $\Gamma$.

Because $\sigma _{\mathcal{D}}$ is clearly compatible with the functor $\tau
^{-1}[1],$ it induces an anti-automorphism $\sigma $ of the transjective
component $\Gamma _{tr}$ of $\Gamma \left( \mathcal{C}_{Q}\right) .$
Finally, the asserted relations for $\sigma $ and $\tau $ follow easily. 
 \end{proof}

\begin{theorem}
\label{theorem_semidirect} {If $\mathcal{A}$ is a cluster algebra  with a seed $(\amas,Q)$ such that $Q$ is a tree then
the group of cluster automorphisms is the semidirect product 
${\rm Aut} \, {\cal A} = {\rm Aut}^+ {\cal A} \rtimes \ZZ_2$. This product is not direct.} 
\end{theorem}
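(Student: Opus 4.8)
The plan is to use Corollary \ref{cor two new}, which reduces the proof to exhibiting a single inverse cluster automorphism of order two. First I would recall that since $Q$ is a tree, the transjective component $\Gamma_{tr}$ of $\Gamma(\mathcal{C}_Q)$ is isomorphic to $\mathbb{Z}Q$, so Lemma \ref{lemma_reflection} provides an anti-automorphism $\sigma$ of $\Gamma_{tr}$ satisfying $\sigma^2=1$. The key observation is that this anti-automorphism of translation quivers should correspond, via the machinery developed in this section, to an inverse cluster automorphism of $\mathcal{A}$.

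The main structural input is Theorem \ref{theorem_main}, which gives a surjective group homomorphism $\phi:\textup{Aut}(\Gamma_{tr})\to\Autp$ with kernel $\textup{Stab}(\Gamma_{tr})_0$. The difficulty is that $\sigma$ is an \emph{anti}-automorphism, so it does not lie in the domain of $\phi$; instead it should induce an \emph{inverse} cluster automorphism. To make this precise, I would argue in parallel with Theorem \ref{theorem_main}: the reflection $\sigma$ reverses the orientation of the arrows between the $P_i[1]$, so the local slice $\sigma(kQ[1])$ has underlying quiver isomorphic to $Q^{\textup{op}}$ rather than $Q$. Defining $f$ on the initial cluster by $f(x_i)=X_{\sigma(P_i[1])}$, the same computation as in part (a) of that proof (the canonical cluster character commuting with mutations) shows $f$ satisfies (CA1) and (CA2), but now with $Q(f(\amas))\cong Q^{\textup{op}}$, so that $f$ is an inverse cluster automorphism. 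Thus $f\in\Aut\setminus\Autp$.

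Next I would verify that $f$ has order two. This should follow from $\sigma^2=1$ together with the fact that $\phi$ (extended to the full group of automorphisms and anti-automorphisms of $\Gamma_{tr}$) is a homomorphism with kernel the pointwise stabiliser: since $\sigma^2$ is the identity on $\Gamma_{tr}$, in particular $\sigma^2(P_i[1])=P_i[1]$, so $f^2(x_i)=X_{\sigma^2(P_i[1])}=X_{P_i[1]}=x_i$ for every $i$, and a cluster automorphism is determined by its values on the initial cluster by Remark \ref{rmk_initial}(b). Hence $f^2=\textup{id}$, and $f$ is an inverse automorphism of order two.

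Having produced $\sigma\in\Aut\setminus\aut$ of order two (here playing the role of $f$), I would invoke Corollary \ref{cor two new} directly to conclude $\Aut=\aut\rtimes\ZZ_2$. Finally, to see the product is not direct, I would use the relation $\sigma\tau^m\sigma=\tau^{-m}$ from Lemma \ref{lemma_reflection}: since $Q$ is a tree it is representation-infinite unless it is Dynkin, and in the Dynkin case $\tau$ is still a nontrivial periodic automorphism, so Remark \ref{rem 3.11} places a nontrivial image of $\tau$ in $\aut$. The conjugation relation shows $\sigma$ acts on this subgroup by inversion, which is a nontrivial action, so the semidirect product cannot be direct. The main obstacle I anticipate is the careful bookkeeping needed to transport the anti-automorphism $\sigma$ of the translation quiver into a genuine inverse cluster automorphism while tracking that the induced quiver is $Q^{\textup{op}}$; the order-two and non-directness claims are then comparatively routine once $f$ is in hand.
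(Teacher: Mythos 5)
Your proposal is correct and follows essentially the same route as the paper: the reflection $\sigma$ of Lemma \ref{lemma_reflection} induces an inverse cluster automorphism of order two, Corollary \ref{cor two new} then yields $\Aut=\Autp\rtimes\ZZ_2$, and non-directness follows from the relation $\sigma\tau\sigma=\tau^{-1}$. The only difference is that you spell out (via the cluster character, in parallel with Theorem \ref{theorem_main}) how the anti-automorphism of $\Gamma_{tr}$ becomes an inverse cluster automorphism and why it has order two, details the paper's proof leaves implicit.
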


\begin{proof}
 %Since any two orientations of a tree quiver are mutation equivalent, Theorem \ref{thm ref} implies that $\aut\ne\Aut$. 
{The anti-automorphism $\sigma $ given by Lemma \ref{lemma_reflection} induces a cluster automorphism $f_\sigma\in\Aut\setminus\aut$ of order two, and, therefore,  Corollary \ref{cor two new}  implies that ${\rm Aut} \, {\cal A} = {\rm Aut}^+ {\cal A} \rtimes \ZZ_2$. This product is not direct because $f_\sigma$  does not commute with $\tau$.}
 \end{proof}
% 
% 
%\begin{proof} From Lemma \ref{lemma_index} we know that $\Autp$ is a normal subgroup of $\Aut$ of index at most $2$. {Using Lemma \ref{lemma_reflection} if $Q$ is a tree and Proposition \ref{prop anti} if $Q$ admits an anti-automorphism, we see that  the index is equal to $2$.} Moreover, denoting by $f_\sigma$ the (inverse) automorphism of $\cal A$ induced by the reflection $\sigma$ from Lemma \ref{lemma_reflection}, we get
%%
%\begin{equation*}
% \langle f_\sigma\rangle \cap {\rm Aut}^+ {\cal A} = \{1\}
% \end{equation*}
% and $\langle f_\sigma\rangle {\rm Aut}^+ {\cal A} = \{1, f_\sigma\} {\rm Aut}^+ {\cal A} = {\rm Aut}^+ {\cal A} \cup f_\sigma (\Autp) = \Aut $.

%By a well-known theorem of group theory (see, for instance, \cite{DF}), we have 
%%
%\begin{equation*}
%{\rm Aut} \, {\cal A} = {\rm Aut}^+ {\cal A} \rtimes \langle f_\sigma\rangle, 
%\end{equation*}
%%
%where the action of $f_\sigma$  on $\Autp$ is given by the conjugation, that is, if $f\in \Autp$, then $f_\sigma f = f_\sigma f f_\sigma.$

%Note that the semi-direct product is not direct since $\sigma$ does not commute with $\tau.$ Finally, $\langle f_\sigma \rangle \cong \ZZ_2.$ \ 
% \end{proof}

\begin{corollary}{ Let $\mathcal{A}$ be a cluster algebra of Dynkin or euclidean type. Then $\Aut=\Autp\rtimes\ZZ_2$.}
\end{corollary}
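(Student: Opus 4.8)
The plan is to reduce everything to the two structural results already established, namely Theorem \ref{theorem_semidirect} (which handles seeds whose quiver is a tree) and Corollary \ref{cor anti new} (which handles seeds whose quiver admits an anti-automorphism), via a case analysis on the type. The observation that drives the whole argument is that the underlying graph of every Dynkin diagram, and of every euclidean diagram except $\tilde{\mathbb{A}}$, is a tree, whereas $\tilde{\mathbb{A}}$ is a cycle and so must be treated by a separate device.

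First I would dispose of the Dynkin case. If $\mathcal{A}$ is of Dynkin type then, by definition, it admits a seed $(\amas,Q)$ in which $Q$ is an orientation of a Dynkin diagram of type $\mathbb{A}_n$, $\mathbb{D}_n$, $\mathbb{E}_6$, $\mathbb{E}_7$, or $\mathbb{E}_8$. In each of these cases the underlying graph of $Q$ is a tree, so $Q$ is a tree quiver, and Theorem \ref{theorem_semidirect} applies verbatim to give $\Aut=\Autp\rtimes\ZZ_2$. Next I would treat the euclidean case, splitting off $\tilde{\mathbb{A}}$. For the types $\tilde{\mathbb{D}}_n$, $\tilde{\mathbb{E}}_6$, $\tilde{\mathbb{E}}_7$, $\tilde{\mathbb{E}}_8$ the same reasoning goes through unchanged: there is a seed whose quiver is an orientation of the corresponding diagram, whose underlying graph is again a tree, so Theorem \ref{theorem_semidirect} yields the conclusion.

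The one remaining case is type $\tilde{\mathbb{A}}$, where the underlying graph is a cycle and Theorem \ref{theorem_semidirect} can no longer be invoked. Here I would appeal instead to Example \ref{ex anti atilde}: every cluster algebra of type $\tilde{\mathbb{A}}$ possesses a seed whose quiver is the explicit two-branch quiver exhibited there, and that quiver carries the anti-automorphism described in that example (the permutation exchanging the two oriented branches, fixing the cycle up to reversal). Corollary \ref{cor anti new} then applies to this seed and delivers $\Aut=\Autp\rtimes\ZZ_2$ for type $\tilde{\mathbb{A}}$ as well, which completes the case analysis and hence the proof.

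The argument is in essence a matching of each type against the appropriate one of the two available tools, so there is no genuinely hard analytic step to overcome. The only place demanding care — and the natural spot for a slip — is the isolation of $\tilde{\mathbb{A}}$ from the other euclidean types: one must recognise that $\tilde{\mathbb{A}}$ is the unique affine type whose diagram fails to be a tree, and is therefore precisely the type for which the tree-based Theorem \ref{theorem_semidirect} is unavailable and the anti-automorphism route of Corollary \ref{cor anti new} must be substituted.
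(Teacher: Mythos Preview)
Your proposal is correct and follows exactly the same route as the paper: apply Theorem \ref{theorem_semidirect} to every Dynkin and euclidean type whose underlying diagram is a tree, and handle the single exception $\tilde{\mathbb{A}}$ via the explicit anti-automorphism of Example \ref{ex anti atilde} together with Corollary \ref{cor anti new}. The paper's proof is a one-line version of precisely this case split.
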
 
\begin{proof}{
The only case that is not a tree is the euclidean type $\tilde{\mathbb{A}}$, and, for this case, the result follows from Corollary \ref{cor anti new} and Example \ref{ex anti atilde}.}
\end{proof}

\subsection{Computing the automorphism groups for   quivers of   types ${\mathbb A}, {\mathbb D}, {\mathbb E}, \tilde{{\mathbb A}}, \tilde{{\mathbb D}}, \tilde{{\mathbb E}}$}
\label{sect 3.3}
Using Theorems \ref{theorem_main} and \ref{theorem_semidirect}, we are able to compute the cluster automorphism groups for the cluster algebras of Dynkin and euclidean types explicitly by computing the automorphism groups of the corresponding transjective component $\Gamma_{tr}$ of the cluster category. This computation is straightforward and is done by using the fact that under such an automorphism, the local structure of the quiver is preserved. We refer the reader to \cite{Ri} for a similar calculation. The results are collected in Table \ref{table}. As an example, we discuss the case of the quiver $\tilde{\mathbb D}_n$ in more detail.

\begin{table}[htdp]
\begin{center}
\begin{tabular}{c|c|c} Dynkin Type & ${\rm Aut}^+ {\cal A}$ & ${\rm Aut}\, {\cal A}$
\\\hline ${\mathbb A}_n, n>1$ & ${\mathbb Z}_{n+3}$ & $D_{n+3}$
\\\hline ${\mathbb A}_1$ & ${\mathbb Z}_{2}$ & $\mathbb{Z}_{2}$
\\\hline ${\mathbb D}_n,\;n>4$ & ${\mathbb Z}_n\times {\mathbb Z}_2$ & ${\mathbb Z}_n\times {\mathbb Z}_2 \rtimes {\mathbb Z}_2$ 
\\\hline ${\mathbb D}_4$ & ${\mathbb Z}_4 \times S_3$ & $D_4 \times S_3$
\\\hline ${\mathbb E}_6$ & ${\mathbb Z}_{14}$& $D_{14}$
\\\hline ${\mathbb E}_7 $ &${\mathbb Z}_{10}$& $D_{10}$
\\\hline$ {\mathbb E}_8$ & ${\mathbb Z}_{16}$& $D_{16}$
\\
\\Euclidean Type & ${\rm Aut}^+ {\cal A}$ & ${\rm Aut}\, {\cal A}$
\\\hline $\tilde{\mathbb A}_{p,q},\;p\neq q$  & $H_{p,q}$  & $H_{p,q} \rtimes {\mathbb Z}_2$
\\\hline $\tilde{\mathbb A}_{p,p}$  & $H_{p,p}\rtimes {\mathbb Z}_2$  & $H_{p,p} \rtimes {\mathbb Z}_2 \rtimes {\mathbb Z}_2$ 
\\\hline $\tilde{\mathbb D}_{n-1}, \; n\neq5$ & $G$ & $G\rtimes {\mathbb Z}_2$ 
\\\hline   $\tilde{\mathbb D}_4$ & ${\mathbb Z} \times S_4$  & ${\mathbb Z} \times S_4\rtimes {\mathbb Z}_2$ 
\\\hline  $ \tilde{\mathbb E}_6$ & ${\mathbb Z} \times S_3$ & ${\mathbb Z} \times S_3 \rtimes {\mathbb Z}_2$ 
\\\hline   $\tilde{\mathbb E}_7$ & ${\mathbb Z} \times {\mathbb Z}_2$ & $({\mathbb Z}\times {\mathbb Z}_2) \rtimes {\mathbb Z}_2$
\\\hline   $\tilde{\mathbb E}_8$  & ${\mathbb Z}$ & ${\mathbb Z} \rtimes {\mathbb Z}_2$ 
\end{tabular} \caption{Cluster automorphism groups for quivers of Dynkin and euclidean types}
\end{center}
\label{table}
\end{table}

The notation in Table \ref{table} is as follows: $D$ stands for the dihedral group, $H_{p,q} = \langle  r_1, r_2| r_1r_2 = r_2 r_1, r_1^p=r_2^q \rangle$ and $G$ is given by equation  (\ref{G}) below. 

Note that for the type $\mathbb{A}_1$, the quiver $Q$ has no arrows, and therefore there are no anti-automorphisms in this case. Moreover, the cluster algebra has exactly two clusters, each consisting of a single cluster variable and the Auslander-Reiten translation in the cluster category is of order 2.

In the case of $\tilde{\mathbb{A}}_{p,q}$, the automorphisms $r_{1}$ and $
r_{2}$ are defined as follows. Let $\Sigma ^{+}$, as in the proof of Lemma \ref{lemma_reflection}, be the
unique slice in the transjective component having $M$ as its only source.
Let $\omega $ denote the unique path of length $p$ from $M$ to the only
sink in $\Sigma^+$. The automorphism $r_{1}$ is defined as follows. We send $M$ to the
target $N$ of the only arrow on $\omega $ with source $M$, and send $\Sigma
^{+}$ isomorphically to the unique slice having $N$ as its only source. This
map extends to an automorphism $r_{1}$ of the transjective component. We
define $r_{2}$ similarly, using the path of length $q$ from $M$ to the sink
of $\Sigma ^{+}$. Note that $\tau^{-1} =r_{1}r_{2}.$

\begin{example}
Consider a cluster algebra ${\cal A=A}(\amas, Q)$, where $Q$ is the following euclidean quiver of   type $\widetilde{\mathbb D}_{n-1}$ with $n\neq 5$ (the number of points is $n$).

\begin{equation*}
\xymatrix@R10pt{
1\ar[rd]&&&&&&n-1\\
&3\ar[r]&4\ar[r]&\quad\cdots\quad\ar[r]&n-3\ar[r]&n-2\ar[ru]\ar[rd]\\
2\ar[ru]&&&&&&n
}
%\begin{array}{cccccccccccccccc} 
% \scriptstyle{1}&  & & & &  & & & & & & & &\scriptstyle{n-1}\\
%%
% & \nwarrow& &  & &  & & & &  & &   &\nearrow &  \\
%%
% & &  \scriptstyle{3} & \longrightarrow&\scriptstyle{4} &  & \longrightarrow&\cdots&\longrightarrow&\scriptstyle{n-3}&\longrightarrow&\scriptstyle{n-2}&&\\
%%
%  & \swarrow & & & & & &  & & & && \searrow &  \\
%%
% \scriptstyle{2}& &&&&&&&&&&&&\scriptstyle{n}
% \end{array}
\end{equation*}

The transjective component $\Gamma_{tr}$ is represented by the infinite translation quiver  $\mathbb{Z}Q$,  an example with $n=8$ is given in Figure \ref{fig 22}.
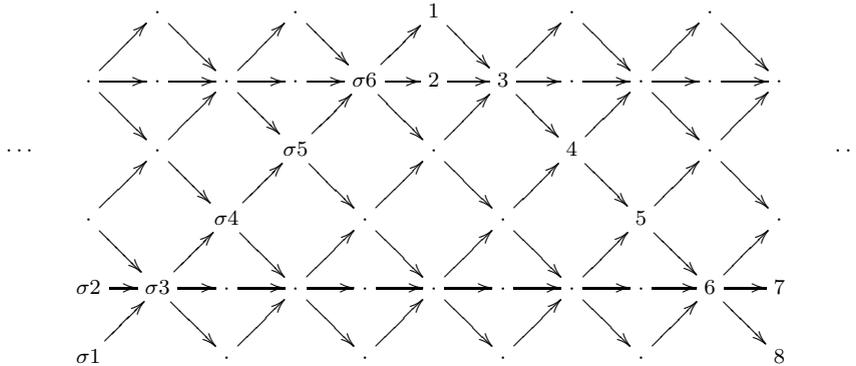
\begin{figure}
\[\xymatrix@R10pt@C10pt@!{&&\cdot\ar[rd]&&\cdot\ar[rd]&&1\ar[rd] &&\cdot\ar[rd] &&\cdot\ar[rd]\\
&\cdot \ar[ru]\ar[r]\ar[rd]&\cdot\ar[r]&\cdot \ar[ru]\ar[r]\ar[rd] & \cdot \ar[r] &\zs 6   \ar[ru]\ar[r]\ar[rd] & 2
 \ar[r] &3 \ar[ru]\ar[r]\ar[rd] & \cdot \ar[r]  &\cdot \ar[ru]\ar[r]\ar[rd] & \cdot \ar[r] &\cdot\\
\cdots&&\cdot \ar[ru]\ar[rd] && \zs 5\ar[ru]\ar[rd] &&\cdot \ar[ru]\ar[rd] &&4\ar[ru]\ar[rd] &&\cdot \ar[ru]\ar[rd]&& \cdots \\
&\cdot\ar[ru]\ar[rd] &&\zs 4\ar[ru]\ar[rd] &&\cdot\ar[ru]\ar[rd] &&\cdot\ar[ru]\ar[rd] &&5\ar[ru]\ar[rd] &&\cdot\\
&\zs 2\ar[r] &\zs 3 \ar[ru]\ar[rd]\ar[r] & \cdot\ar[r] &  \cdot \ar[ru]\ar[rd]\ar[r] & \cdot\ar[r] &  \cdot \ar[ru]\ar[rd]\ar[r] & \cdot\ar[r] &  \cdot \ar[ru]\ar[rd]\ar[r] & \cdot\ar[r] & 6 \ar[ru]\ar[rd]\ar[r] & 7  &\\
&\zs 1\ar[ru] &&\cdot\ar[ru] &&\cdot\ar[ru] &&\cdot\ar[ru] &&\cdot\ar[ru] &&8 
}
\]
\caption{ Auslander-Reiten quiver $\zG_{tr}$ of the transjective component of type $\tilde{ \mathbb{D}}$}\label{fig 22}
\end{figure}
%\begin{equation*}
%\begin{array}{lllclllllllllllllccccccccc} 
%&&&\scriptstyle{1}&&&&\cdot&&&&\cdot& \\
%%
%&&\nearrow&&\searrow&&\nearrow&&\searrow&&\nearrow&&\searrow& \\
%%
%&\scriptstyle{3}&\rightarrow&\scriptstyle{2}&\rightarrow&\cdot&\rightarrow&\cdot&\rightarrow&\cdot&\rightarrow&\cdot&\rightarrow&\cdot&\\
%%
%&&\searrow&&\nearrow&&\searrow&&\nearrow&&\searrow&&\nearrow&&\\
%%
%\cdots&&&\scriptstyle{4}&&&&\cdot&&&&\cdot& &&\cdots\\
%%
%&&\nearrow&&\searrow&&\nearrow&&\searrow&&\nearrow&&\searrow& \\
%%
%&\cdot&\rightarrow&\cdot&\rightarrow&\scriptstyle{5}&\rightarrow&\cdot&\rightarrow&\cdot&\rightarrow&\cdot&\rightarrow&\cdot&\\
%%
%&&\searrow&&\nearrow&&\searrow&&\nearrow&&\searrow&&\nearrow&&\\
%%
%%&&\nearrow&&\searrow&&\nearrow&&\searrow&&\nearrow&&\searrow& \\
%&\cdot&\rightarrow&\cdot&\rightarrow&\cdot&\rightarrow&\scriptstyle{6}&\rightarrow&\scriptstyle{7}&\rightarrow&\cdot&\rightarrow&\cdot&\\
%%
%&&\nearrow&&\searrow&&\nearrow&&\searrow&&\nearrow&&\searrow&\\
%&\cdot&&&&\cdot&&&& \scriptstyle{8} &&&&\cdot \\
%\end{array}
%\end{equation*}
%
Recall that automorphisms of a translation quiver are by definition the quiver automorphisms commuting with the translation $\tau$.  Besides $\tau$ we   introduce three more generators of the automorphism group.

Let $\rho_1$ denote the automorphism which interchanges the corresponding points in the $\tau$-orbits of $1$ and $2$ in $\zG_{tr}$ and fixes all other points. 
Let $\rho_n$ denote the automorphism which interchanges the corresponding points in the $\tau$-orbits of $n-1$ and $n$ in $\zG_{tr}$ and fixes all other points. 
Finally, let $\zs$ be the automorphism given by the translation of the plane  that sends the point $n$ to the point $1$ followed by the reflection with respect to the horizontal line through the point $1$; we have indicated the action of $\zs$ on the points $1,2,\ldots,6$ in Figure \ref{fig 22}, and $\zs 8=1$ and $\zs 7=2$.

%

%Let us denote by $\rho$ the automorphism which interchanges the corresponding points in the orbits of $1$ and $2$, that is $\rho(1) = 2$ and $\rho(2) = 1.$ By $\gamma$ we denote the following automorphism: in the case of $n$ odd, $\gamma$ is the reflection in the horizontal axis of symmetry; in the case of $n$ even, $\gamma$ is the glide reflection with respect to the horizontal axis of symmetry, such that $\gamma^2 = \tau.$

 Since for every point of the quiver $kQ$ the number of incoming and outgoing arrows is preserved under a quiver automorphism, one sees that every automorphism of $kQ$  can be expressed as a combination of $\tau,\rho_1,\rho_n$, and $\zs.$

We note the following relations between these generators: 
\begin{enumerate}
\item the translation $\tau$ commutes with all automorphisms and is of infinite order; 
\item $\rho_1$ and $\rho_n$ are of order two and commute with each other;
\item $\zs^2=\tau^{n-3}$;
\item $\rho_1\zs=\zs\rho_n$ and $\zs\rho_1=\rho_n\zs$.
\end{enumerate}

Thus we get a presentation of the group of automorphisms of $\zG_{tr}$ as 
\begin{equation}\label{G}
G=\left\langle \tau,\zs,\rho_1,\rho_n \left| \begin{array}{c}
 \rho_i^2=1 ,\tau \rho_i=\rho_i \tau \ (i=1,n)\\
\tau\zs=\zs\tau,\ \zs^2=\tau^{n-3} \\
\rho_1\zs=\zs\rho_n, \ \zs\rho_1=\rho_n\zs
\end{array}\right.\right\rangle
\end{equation}

\end{example}

\begin{section}
{Cluster algebras from surfaces}\label{sect 3} Following \cite{FST}, we describe the   construction of cluster algebras from surfaces. 

Let $S$ be an oriented Riemann surface with or without boundary, and let $M\subset S$ be a finite set of marked points such that $M$ contains at least one point of every connected component of the boundary. If the boundary is empty then the surface is said to be \emph{closed}. Points in $M$ that are in the interior of $S$ are called \emph{punctures}. We call the pair $(S,M)$ simply a \emph{surface}.

For technical reasons, we require that $(S,M)$ is not
a sphere with one, two or three punctures;
a disc with one, two or three marked points on the boundary;
or a punctured disc with one marked point on the boundary. Some simple examples of surfaces are given in Table \ref{table 1}.

%a monogon with zero or one puncture; 
%or a bigon or triangle without punctures.
  
\begin{table}
\begin{center}
  \begin{tabular}{ c | c | c | c  | l | l  }
  \  g\ \  &\ \  b \ \   & \ \  c \ \ &\ \ p\ \     &\  surface  & type \\
   \hline    
  0& 1 & n+3 &0& \ polygon & $\mathbb{A}_n$\\
    0 & 1 & n &1& \ once punctured polygon & $\mathbb{D}_n$ \\ 
    0 & 1 & n-3 &2&\ twice punctured polygon& $\tilde{\mathbb{D}}_{n-1}$ \\ 
    %1 & 1 & n-3 & 0&\ torus with disc removed \\
    0& 2 & n & 0&\ annulus& $\tilde{\mathbb{A}}_{n-1}$\\
    0 & 2 & n-3 & 1&\ punctured annulus& not acyclic\\
   % 2 & 1 & n-6 & 0& \ torus with 2 discs removed \\ \hline
        0 & 0 & 0 & 4 & \ sphere with 4 punctures& not acyclic\\
1 & 0 & 0 & 1 &\ punctured torus& not acyclic\\
        3 & 0 & n-3 & 0& \ pair of pants & not acyclic\\ \\
  \end{tabular}
\end{center}
\caption{Examples of  surfaces, $g$ is the genus, $b$ is the number of boundary components, $p$ the number of punctures, $c$ the number of marked points on the boundary and $n$ is the rank of the cluster algebra.}\label{table 1}
\end{table}

\subsection{Arcs and triangulations}

\begin{definition}
An \emph{arc} $\zg$ in $(S,M)$ is the isotopy class of a curve in $S$ such that 
\begin{itemize}
\item[(a)] the endpoints of the curve are in $M$;
\item[(b)]  the curve does not cross itself, except that its endpoints may coincide;
\item[(c)] except for the endpoints,  the curve is disjoint from $M$ and
  from the boundary of $S$,
\item[(d)]  the curve does not cut out an unpunctured monogon or an unpunctured bigon. 
\end{itemize}   
\end{definition}

For any two arcs $\zg,\zg'$ in $S$, let $e(\zg,\zg')$ be the minimal
number of crossings of 
curves $\za$ and $\za'$, where $\za$ 
and $\za'$ range over all curves in the isotopy classes 
$\zg$ and $\zg'$, respectively.
We say that two arcs $\zg$ and $\zg'$ are  \emph{compatible} if $e(\zg,\zg')=0$.

An \emph{ideal triangulation} is a maximal collection of
pairwise compatible arcs.
The arcs of an ideal
triangulation cut the surface into \emph{ideal triangles}.

\begin{figure}
\begin{center}
\includegraphics{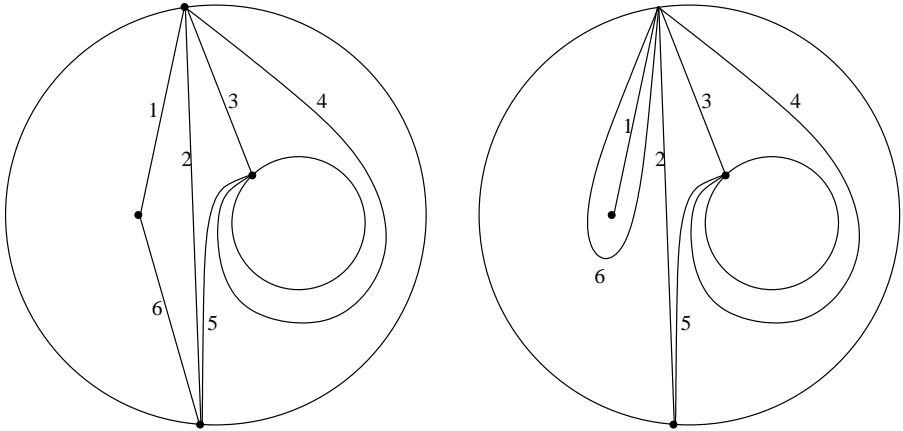}
\caption{Two ideal triangulations of a punctured annulus related by a flip of the arc $6$. The triangulation on the right hand side has a self-folded triangle.}
\label{fig 2}
\end{center}
\end{figure}

 Examples of ideal triangulations are given in Figure \ref{fig 2}.
There are two types of ideal triangles: triangles that have three distinct sides and triangles that have only two. The latter are called \emph{self-folded} triangles. For example, the triangle formed by the arcs $6$ and $1$ on the right hand side of Figure \ref{fig 2} is self-folded.
In a self-folded triangle the arc incident to the puncture is called \emph{radius} and the other arc is called \emph{loop}.

Ideal triangulations are connected to each other by sequences of 
{\it flips}.  Each flip replaces a single arc $\gamma$ 
in a triangulation $T$ by the (unique) arc $\gamma' \neq \gamma$
that, together with the remaining arcs in $T$, forms a new ideal
triangulation.

Note that an arc $\gamma$ that is the radius inside a self-folded triangle
in $T$ cannot be flipped.

In \cite{FST}, the authors associated a cluster algebra to 
any bordered surface with marked points.  Roughly speaking,
the cluster variables correspond to arcs, the clusters
to triangulations, and the mutations to flips.  However,
because arcs inside self-folded triangles cannot be flipped,
the authors were led to introduce the slightly more general notion 
of {\it tagged arcs}.  They showed that ordinary arcs can
all be represented by tagged arcs and gave a notion of flip 
that applies to all tagged arcs.

A {\it tagged arc} is obtained by taking an arc that does not 
cut out a once-punctured monogon and marking (``tagging")
each of its ends in one of two ways, {\it plain} or {\it notched},
so that
\begin{itemize}
\item[(a)] each end connecting to a marked point on the boundary of $S$ must be tagged plain;
\item[(b)] if  both ends of an arc connect to the same point then they must be tagged in the same way.
\end{itemize}

\begin{definition} \label{compatible}
Two tagged arcs $\alpha$ and
$\beta$ are called {\it compatible} if the arcs $\alpha^0$ and $\beta^0$ obtained from 
   $\alpha$ and $\beta$ by forgetting the taggings are compatible and
\begin{itemize}
\item[(a)] if $\alpha^0=\beta^0$ then at least one end of $\alpha$
  must be tagged in the same way as the corresponding end of $\beta$;
\item[(b)] if $\alpha^0\neq \beta^0$ but they share an endpoint $a$, 
 then the ends of $\alpha$ and $\beta$ connecting to $a$ must be tagged in the 
same way.
\end{itemize}
\end{definition}

One can represent an ordinary arc $\beta$ by 
a tagged arc $\iota(\beta)$ as follows.  If $\beta$ 
does not cut out a once-punctured monogon, then $\iota(\beta)$
is simply $\beta$ with both ends tagged plain.
Otherwise, $\beta$ is a loop based at some marked point $a$
and cutting out
a punctured monogon with the sole puncture $b$ inside it.
Let $\alpha$ be the unique arc connecting $a$ and $b$ and compatible
with $\beta$.  Then $\iota(\beta)$
is obtained by tagging $\alpha$ plain at $a$ and notched at $b$.
Figure \ref{figtags} shows the tagged triangulation corresponding to the triangulation on the right hand side of Figure \ref{fig 2}. The notching is 
indicated by a 
bow tie.

  A maximal  collection
of pairwise compatible tagged arcs is called a {\it tagged triangulation}.

\begin{figure}
\begin{center}
\includegraphics{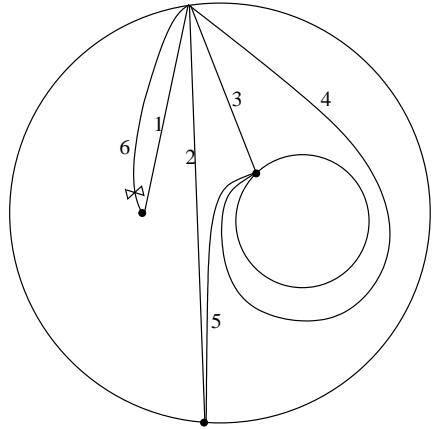}
\caption{Tagged triangulation of the punctured annulus corresponding to the ideal triangulation of the right hand side of Figure \ref{fig 2}.}
\label{figtags}
\end{center}
\end{figure}

We are now  ready to define the cluster algebra associated to the surface. For that purpose, we choose an ideal triangulation $T$ and then define a quiver $Q_T$ without loops or 2-cycles, or, equivalently, a skew-symmetric integer matrix $B_T$.
Let $\tau_1,\tau_2,\ldots,\tau_n$ be the $n$ arcs of
$T$.
For any triangle $\Delta$ in $T$ which is not self-folded, we define a matrix 
$B^\Delta=(b^\Delta_{ij})_{1\le i\le n, 1\le j\le n}$  as follows.
\begin{itemize}
\item $b_{ij}^\Delta=1$ and $b_{ji}^{\Delta}=-1$ in each of the following cases:
\begin{itemize}
\item[(a)] $\tau_i$ and $\tau_j$ are sides of 
  $\Delta$ with  $\tau_j$ following $\tau_i$  in the 
  clockwise order;
\item[(b)] $\tau_j$ is a radius in a self-folded triangle enclosed by a loop $\tau_\ell$, and $\tau_i$ and $\tau_\ell$ are sides of 
  $\Delta$ with  $\tau_\ell$ following $\tau_i$  in the 
clockwise order;
\item[(c)] $\tau_i$ is a radius in a self-folded triangle enclosed by a loop $\tau_\ell$, and $\tau_\ell$ and $\tau_j$ are sides of 
  $\Delta$ with  $\tau_j$ following $\tau_\ell$  in the 
clockwise order;
\end{itemize}
\item $b_{ij}^\Delta=0$ otherwise.
\end{itemize}
 
Then define the matrix 
$ B_{T}=(b_{ij})_{1\le i\le n, 1\le j\le n}$  by
$b_{ij}=\sum_\Delta b_{ij}^\Delta$, where the sum is taken over all
triangles in $T$ that are not self-folded.

Note that  $B_{T}$ is skew-symmetric and each entry  $b_{ij}$ is either
$0,\pm 1$, or $\pm 2$, since every arc $\tau$ is in at most two triangles. 
We associate a quiver $Q_T$ to the matrix $B_T$ as follows. The points of $Q_T$ are labeled by $1,2,\ldots,n$ and the number of arrows from $i$ to $j$ equals $b_{ij}$, with the convention that if $b_{ij}$ is a negative number, then having $b_{ij}$ arrows from $i$ to $j$ means having $|b_{ij}|$ arrows from $j$ to $i$. For example, the quiver corresponding to the triangulation on the right of Figure \ref{fig 2} is 
\[\xymatrix@C50pt@R10pt{1\ar[rd]&&3\ar[rd]\ar[dd]\\&2\ar[ru]&&4\\
6\ar[ru]&&5\ar[ul]\ar[ru]}\]
 Since the matrix $B_T$ is skew-symmetric, it follows that $Q_T$ has no oriented cycles of length at most two.

The cluster algebra $\mathcal{A}=\mathcal{A}(\mathbf{x},Q_T)$ given by the quiver $Q_T$ is said to be the \emph{cluster algebra} (with trivial coefficients) \emph{associated to the surface $(S,M)$}.

\begin{remark}\label{remtag} \ralf{It has been shown in \cite{FST} that} if the surface $(S,M)$ is not a closed surface with exactly one puncture then there is a bijection between tagged arcs in $(S,M)$ and cluster variables in the cluster algebra, such that compatible tagged arcs correspond to compatible cluster variables, and tagged triangulations correspond to clusters. The mutations in the cluster algebra are given by the flips in the tagged triangulations.

If $(S,M)$ is a closed surface with exactly one puncture, for example a once punctured torus, then there is a bijection between arcs (not tagged arcs) and cluster variables. The reason for this is that a flip can not change the tagging at the endpoint of a given arc because all arcs are incident to the unique puncture, thus changing the tagging on one of the arcs would not be compatible with the others. 

This fact will be important  when we consider the cluster automorphisms induced by change of taggings, see Lemma \ref{lemtag} and Theorem \ref{theorem mg}.
\end{remark}

\subsection{Mapping class groups} \label{sect 4.2} In this section, we give the definitions and some basic properties of mapping class groups. For further details we refer the reader to \cite{FM}.

Let $\textup{Homeo}^+(S)$ be the group of orientation preserving homeomorphisms from $S$ to $S$ and 
let $\textup{Homeo}^+(S,\partial S)$ be the subgroup of all   $f\in  \textup{Homeo}^+(S)$ such that the restriction  $f|_{\partial S}$ of $f$ to the boundary is equal to the identity $1_{\partial S}$.

Two homeomorphisms $f,g$ of $S$ are \emph{isotopic} if there is a continuous function $H:S\times [0,1]\to S$ such that $H(x,0)=f$ and $H(x,1)=g$ and such that for each $t\in [0,1]$ the map $H(x,t):S\to S$ is a  homeomorphism.

Let $\textup{Homeo}_0(S,\partial S)$ be the subgroup of all $f\in \textup{Homeo}^+(S,\partial S)$ that are isotopic to the identity $1_S$ via an isotopy $H$ fixing $\partial S$ pointwise, thus $H(x,t)=x$ for all $x\in \partial S$ and $t\in [0,1]$.

The \emph{mapping class group} $\Mod$ of the surface $S$ is defined as the quotient group
 \[\Mod=\textup{Homeo}^+(S,\partial S)/\textup{Homeo}_0(S,\partial S).\]

 We   now define the mapping class group of the surface with marked points $(S,M)$ in a similar way. 
 Let $\homeo$ be the group of orientation preserving homeomorphisms from $S$ to $S$ which map $M$ to $M$. Note that we do \emph{not} require that the points in $M$ are fixed, neither that the points  on the boundary of $S$ are fixed, nor that each boundary component is mapped to itself. However if a boundary component $C_1$ is mapped to another component $C_2$ then the two components must have the same number of marked points.
We say that a homeomorphism $f$ is \emph{isotopic to the identity relative to $M$}, if $f$ is isotopic to the identity via an  isotopy $H$ that fixes $M$ pointwise, thus $H(x,t)=x$ for all $x\in M$ and $t\in [0,1]$. Let $\homeozero$ be the subgroup of all $f\in \homeo$ that are isotopic to the identity relative to $M$.

We define the \emph{mapping class group} $\mg$ of the surface $(S,M)$ to be the quotient
\[\mg=\homeo/\homeozero.\]
 
 The two mapping class groups are related as follows.
\begin{lemma}\label{mgg}
$\Mod $ is isomorphic to a subgroup of $\mg$.
\end{lemma}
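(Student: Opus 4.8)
The plan is to build an injective group homomorphism $\Phi\colon\Mod\to\mg$, exploiting that a homeomorphism fixing $\partial S$ pointwise automatically fixes every marked point lying on the boundary, so that only the interior punctures require attention. To define $\Phi$, I would represent a class of $\Mod$ by some $\varphi\in\homeos$. Then $\varphi$ fixes $M\cap\partial S$ pointwise and maps the finite set $M\cap\mathrm{int}(S)$ of punctures to another finite subset of $\mathrm{int}(S)$. By the isotopy extension theorem there is an ambient isotopy of $S$, constant on $\partial S$, carrying $\varphi(M\cap\mathrm{int}(S))$ back to $M\cap\mathrm{int}(S)$; replacing $\varphi$ by its composite with the time-one map of this isotopy, I may assume $\varphi(M)=M$, so that $\varphi\in\homeo$. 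I then set $\Phi$ of the given class equal to the class of $\varphi$ in $\mg$. Granting that this is well defined, it is a homomorphism by construction.

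Injectivity is the cleaner half, and this is exactly where the standing hypothesis that $M$ meets every boundary component is used. Suppose the class of $\varphi$ is trivial in $\mg$, witnessed by an isotopy $H_t$ with $H_0=\mathrm{id}$, $H_1=\varphi$ and $H_t|_M=\mathrm{id}_M$ for all $t$. Each $H_t$ maps $\partial S$ to itself and, since $H_0=\mathrm{id}$, fixes every boundary component setwise. On a boundary circle $C$ the assignment $t\mapsto H_t|_C$ is then a loop, based at the identity, in the group of orientation preserving homeomorphisms of $C$ fixing the nonempty set $M\cap C$ pointwise; cutting $C$ at a point of $M\cap C$ shows this group to be contractible (being assembled from copies of $\textup{Homeo}^+([0,1],\partial[0,1])$), so the loop is null-homotopic. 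Feeding this null-homotopy back through a collar neighbourhood of $\partial S$, I would deform $H_t$ into an isotopy that equals the identity on $\partial S$ for every $t$ while still running from $\mathrm{id}$ to $\varphi$; this places $\varphi$ in $\textup{Homeo}_0(S,\partial S)$ and shows the original class in $\Mod$ was trivial.

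The main obstacle is well-definedness, namely that the class of $\varphi$ in $\mg$ depends neither on the chosen representative in $\homeos$ nor on the correcting isotopy. Any two admissible choices differ by a homeomorphism $g$ that fixes $\partial S$ pointwise, satisfies $g(M)=M$, and is isotopic to the identity through maps fixing $\partial S$ pointwise; what must be shown is $g\in\homeozero$, that is, that $g$ is isotopic to the identity relative to $M$. The trouble is that an isotopy of this sort drags the interior punctures along loops whose homotopy classes are precisely the point-pushing (Birman) elements, and these need not vanish in $\mg$. I would attack this by arranging every correcting isotopy to be supported in a union of disjoint discs, one around each puncture and disjoint from the remaining marked points, so that the loop swept out by each puncture is null-homotopic in the complement of $M$ minus that puncture and the associated point-push is trivial. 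The delicate point, and the crux of the whole argument, is to realise this localisation simultaneously and compatibly for all punctures, for which I expect to need the parametrised form of the isotopy extension theorem together with the hypotheses on $(S,M)$. This is the step I expect to be hardest, and it is the reason the two groups, defined through the a priori incomparable relations of isotopy rel $\partial S$ and isotopy rel $M$, can nevertheless be compared.
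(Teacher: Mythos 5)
Your proof stalls exactly where you predict, and the gap is not merely hard: it is fatal to this construction whenever $(S,M)$ has punctures and $S$ carries an essential loop. Consider the punctured annulus (a surface from the paper's own Table), with one interior puncture $z$ and marked points on both boundary circles. Let $g$ be the point-pushing homeomorphism obtained by isotopy extension applied to a loop that drags $z$ once around the core of the annulus, with the isotopy supported away from $\partial S$. Then $g$ fixes $\partial S$ pointwise, satisfies $g(M)=M$, and lies in $\textup{Homeo}_0(S,\partial S)$, so $g$ and the identity represent the \emph{same} class in $\Mod$ and neither needs any correcting isotopy. Yet $g\notin\homeozero$: an arc from a boundary marked point to $z$ is sent by $g$ to an arc wrapping once more around the core, which is not isotopic to the original arc relative to $M$. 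So the two admissible representatives have \emph{different} images in $\mg$, and no choice of correcting isotopies can repair this, since the ambiguity already occurs between representatives requiring no correction at all. Your plan to localise the corrections in disjoint discs around the punctures is precisely what cannot be done: the loop swept out by the puncture is essential in $S$, so the associated point-push is nontrivial, and the natural map $\Mod\to\mg$ is simply not well defined in the punctured case.

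It is worth comparing this with the paper's own proof, which is two lines: it asserts that ``clearly'' $\textup{Homeo}_0(S,\partial S)$ is a subgroup of $\homeozero$ and lets the induced map on quotients do the rest. That assertion is exactly the well-definedness claim you isolated, and it fails for the same point-pushing reason (indeed an element of $\textup{Homeo}_0(S,\partial S)$ need not even map $M$ to $M$). So your analysis has uncovered a genuine gap in the lemma's proof rather than a defect peculiar to your attempt. When there are no punctures, i.e.\ $M\subseteq\partial S$, the difficulty evaporates: any homeomorphism or isotopy fixing $\partial S$ pointwise automatically fixes $M$ pointwise, so the natural map is well defined, and your injectivity argument --- which is correct, uses the standing hypothesis that $M$ meets every boundary circle, and is considerably more detailed than the paper's bare assertion (contractibility of the group of orientation-preserving circle homeomorphisms fixing a nonempty finite set pointwise, followed by a collar correction) --- completes the proof. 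For punctured surfaces the statement needs either a restricted scope or a different construction; for instance, for the punctured annulus one can define the embedding directly on the generator of $\Mod\cong\ZZ$ by sending the Dehn twist about the core to its own class in $\mg$ and checking injectivity on arcs joining the two boundary components.
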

\begin{proof}
Clearly $\textup{Homeo}_0(S,\partial S)$ is a subgroup of $\homeozero$, and thus there is a map from $\Mod$ to $\mg$ sending the class of a homeomorphism $f$ in $\Mod$ to its class in $\mg$. This map is an injective group homomorphism.
 \end{proof}

 Next we review \emph{Dehn twists}. 
 If $S$ is an annulus, we can parametrise $S$ as $S^1\times [0,1]$, where $S^1$ denotes the circle of radius one, such that the two boundary components of $S$ are $S^1\times\{0\}$ and $S^1\times\{1\}$. Then the map $T:S^1\times [0,1]\to S^1\times[0,1], (\theta,t)\mapsto (
\theta +2\pi t, t)$ is an orientation preserving homeomorphism that fixes both boundary components of $S$ pointwise. $T$ is called \emph{Dehn twist} on $S$. Thus $T\in\homeos$, and since $T$ is not isotopic to the identity relative to $\partial S$, the class of $T$ in  $\Mod$ is non-zero. It is clear that the class of $T$ has infinite order in $\Mod$, and hence it generates an infinite cyclic subgroup of $\Mod $. Since $S$ is an annulus, one can show that the class of $T$ actually generates the whole group $\Mod $. One can think of this Dehn twist as cutting the annulus along the equator $S^1\times\{1/2\}$, performing a full rotation of one end (keeping the boundary fixed) and gluing the two pieces back together, see Figure \ref{fig dehn}.

\begin{figure} 
\includegraphics{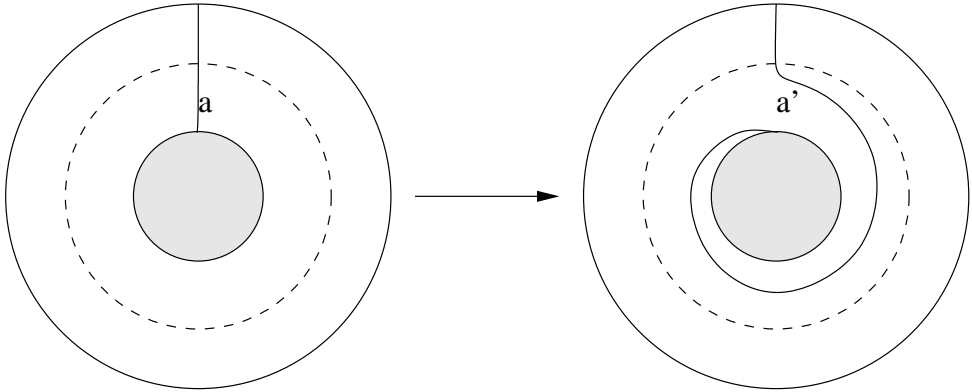}
\caption{Dehn twist on the annulus, the curve a is mapped to the curve a'; the equator is drawn as a dashed line} \label{fig dehn}
\end{figure}
 
Now suppose that $S$ is any Riemann surface, and that $c$ is a closed simple curve in $S$. Then one can define a Dehn twist about $c$ in $S$ by performing the Dehn twist $T$ on a regular neighbourhood $N$ of $c$ in $S$ which is homeomorphic to an annulus, see Figure \ref{fig regular}.

\begin{figure} 
\begin{center}
\includegraphics{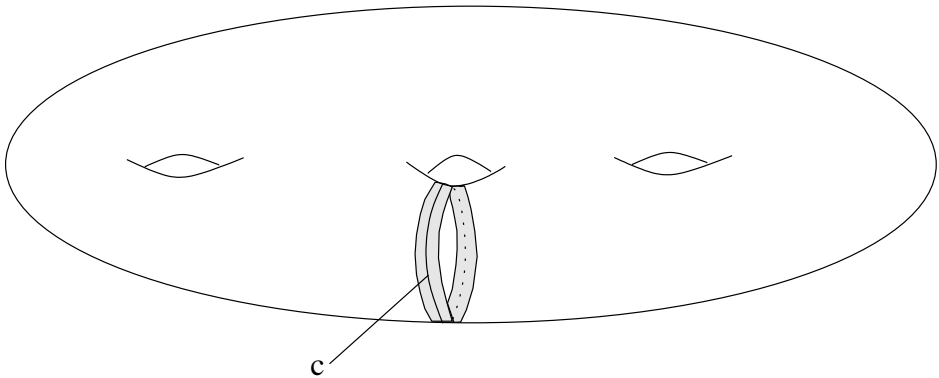}
\end{center}\caption{Regular neighborhood of a closed curve $c$} \label{fig regular}
\end{figure}

\begin{remark}\label{rem mg1}
If the surface $S$ has genus at least one or if $S$ has genus zero and at least two boundary components, then there exists a Dehn twist that generates an infinite cyclic subgroup of $\Mod$.
\end{remark} 
 
\begin{remark}\label{rem mg2} We list the mapping class groups of some Riemann surfaces: 
\begin{center}
\begin{tabular}{|c|c|c|c|c|c|}\hline
S& disc & annulus& punctured disc & torus\\\hline
$\mathcal{M}od (S)$ &
 $0$ &$\ZZ$&$0$&$\textup{SL}(2,\ZZ)$ \\ \hline
\end{tabular}
\end{center}

\begin{center}
\begin{tabular}{|c|c|c|c|c|c|} \hline
S& $\textup{sphere with $3$ punctures}$& $\textup{disc with $p$ punctures}$\\ \hline
$\mathcal{M}od (S)$ &
 $S_3$ &$B_p$ \\ \hline
\end{tabular}
\vspace{10pt}
\end{center}
where $S_3$ denotes the symmetric group on $3$ letters and $B_p$ the braid group on $p$ strands.

In general mapping class groups are very
difficult to compute and known only for a few cases.

%
%\[\begin{array}{rclcrcl}
%{\mathcal{M}od\,(\textup{disc})} &=&0 &\quad&{\mathcal{M}od\,(\textup{annulus})} &\cong &\ZZ  \\
%{\mathcal{M}od\,(\textup{punctured disc})} &=&0&\quad & {\mathcal{M}od\,(\textup{torus})} &\cong &\textup{SL}(2,\ZZ)
%\end{array} 
%\]
%\[\begin{array}{rcl}{\mathcal{M}od\,(\textup{sphere with $3$ punctures})} & \cong&  \textup{symmetric group } S_3\\
%{\mathcal{M}od\,(\textup{disc with $p$ punctures})} & \cong&   \textup{braid group } B_p.
%\end{array} \]

\end{remark} 
 
\subsection{Marked mapping class group}
In order to describe the cluster automorphism group 
of a cluster algebra corresponding to a marked surface $\left( S,M\right) $, we need a group that contains the mapping class group $\mg$ from the previous subsection, but also contains automorphisms that change the taggings at the punctures. We call this group the \emph{marked mapping class group.}

%Let $(S,M)$ be a surface with $p$ punctures $z_1,\ldots,z_p$.

\begin{definition}
A \emph{marked mapping class} $(\bar f, \cal P) $ is an element $\bar f\in \mg$ together with a subset $\cal P$ of the set of punctures of $(S,M)$. 
\end{definition}
If the set $\cal P$ consists of a single element $z$, then we   write $(\bar f, z)$ instead of $(\bar f, \{z\})$ for the marked  mapping class.

A marked  mapping class acts on the  set of arcs of the surface by applying the homeomorphism $f$ and changing the tagging at each puncture in the set $\mathcal P$. We can define a product on the set of marked  mapping classes by
\[(\bar f_1,\mathcal{P}_1)(\bar f_2, \mathcal {P}_2) =(\bar f_1\bar f_2, \mathcal {P}_1\ominus f_1(\mathcal {P}_2)),\]
where $\ominus $ denotes the symmetric difference $A\ominus B= (A\cup B)\setminus (A\cap B)$.

\begin{lemma}
The set of marked  mapping classes forms a group under the product above. 
\end{lemma}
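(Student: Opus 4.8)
The plan is to exhibit the set of marked mapping classes, with the stated multiplication, as a semidirect product of $\mg$ with the group of subsets of punctures under symmetric difference, and then to invoke the fact that a semidirect product is a group. Write $\mathbf{P}$ for the set of punctures of $(S,M)$ and $2^{\mathbf{P}}$ for its power set. First I would record that $(2^{\mathbf{P}},\ominus)$ is an abelian group: the symmetric difference is associative and commutative, the empty set is the neutral element, and each subset is its own inverse, so that $(2^{\mathbf{P}},\ominus)\cong\ZZ_2^{\,p}$.

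The key point, and the one requiring genuine argument, is that $\mg$ acts on $(2^{\mathbf{P}},\ominus)$ by automorphisms, so that the expression $f_1(\mathcal P_2)$ appearing in the product depends only on the class $\bar f_1\in\mg$. Any orientation-preserving homeomorphism $f\in\homeo$ carries the interior of $S$ to itself and hence restricts to a bijection of $\mathbf{P}$; moreover, being a bijection of the underlying set, it respects symmetric differences, $f(\mathcal A\ominus\mathcal B)=f(\mathcal A)\ominus f(\mathcal B)$. To see that the induced permutation of $\mathbf{P}$ is independent of the representative, I would use that every element of $\homeozero$ is isotopic to the identity by an isotopy fixing $M$ pointwise, so in particular it fixes each puncture; hence two representatives $f,f'$ of the same class differ by an element of $\homeozero$ and induce the same bijection of $\mathbf{P}$. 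This produces a well-defined group homomorphism $\rho\colon\mg\to\textup{Aut}(2^{\mathbf{P}},\ominus)$, given by $\rho(\bar f)(\mathcal P)=f(\mathcal P)$.

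With $\rho$ in hand, the given rule $(\bar f_1,\mathcal P_1)(\bar f_2,\mathcal P_2)=(\bar f_1\bar f_2,\mathcal P_1\ominus f_1(\mathcal P_2))$ is precisely the multiplication of the semidirect product $\mg\ltimes 2^{\mathbf{P}}$; hence the set of marked mapping classes is a group. Its identity is $(\bar 1,\emptyset)$, and the inverse of $(\bar f,\mathcal P)$ is $(\bar f^{-1},f^{-1}(\mathcal P))$. If one prefers to avoid quoting the semidirect-product construction, the three axioms can be checked by hand; the only nonformal step is associativity, which, after cancelling the $\mg$-coordinates, comes down to the identity $f_1(\mathcal P_2\ominus f_2(\mathcal P_3))=f_1(\mathcal P_2)\ominus (f_1f_2)(\mathcal P_3)$ together with associativity of $\ominus$.

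I expect the only real obstacle to be the well-definedness of the puncture action: one must be sure that $f_1(\mathcal P_2)$ does not change when $f_1$ is replaced by another representative of $\bar f_1$. Everything else is formal, and this step rests entirely on the fact that $\homeozero$ fixes $M$, and therefore each puncture, pointwise.
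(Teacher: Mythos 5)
Your proof is correct, but it is organized differently from the paper's. The paper proves this lemma by a bare-hands verification of the group axioms: associativity is attributed to the associativity of the symmetric difference, the identity is $(1,\emptyset)$, and inverses are $(\fbar,\mathcal{P})^{-1}=(\fbar^{-1},f^{-1}(\mathcal{P}))$; only in the \emph{next} lemma does the paper identify $\mgm$ with the semidirect product $\mathcal{Z}\rtimes\mg$. You run this in the opposite order: you first build the action homomorphism $\rho\colon\mg\to\textup{Aut}(\mathcal{Z},\ominus)$, checking that it is well defined on mapping classes because every element of $\homeozero$ fixes $M$ (hence each puncture) pointwise, and then recognize the given multiplication as exactly the semidirect-product multiplication, so the group structure comes for free. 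Your route makes explicit two points that the paper's terse proof leaves implicit: (i) the product itself is well defined, i.e.\ $f_1(\mathcal{P}_2)$ does not depend on the chosen representative $f_1$ of $\fbar_1$; and (ii) associativity needs more than associativity of $\ominus$ alone --- it also needs that bijections distribute over symmetric difference, i.e.\ that $\mg$ acts by automorphisms of $(\mathcal{Z},\ominus)$, which is precisely your identity $f_1(\mathcal{P}_2\ominus f_2(\mathcal{P}_3))=f_1(\mathcal{P}_2)\ominus(f_1f_2)(\mathcal{P}_3)$. The paper's approach is more elementary and self-contained for this single statement, while yours effectively merges it with the subsequent semidirect-product lemma and is the more careful of the two.
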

\begin{proof}
Associativity follows from the associativity of the symmetric difference, the identity is given by $(1, \emptyset)$, where $1$ denotes the identity of $\mg$, and inverses are given by $(\bar f,\mathcal{P})^{-1}=(\bar f^{-1},f^{-1}(\mathcal{P}))$.
 \end{proof}

\begin{definition}
The \emph{marked mapping class group} $\mgm$ of the surface $(S,M)$ is the group of all marked  mapping classes of $(S,M)$.
\end{definition}

We can also define $\mgm$ as a semidirect product as follows. Let $\{z_1,\ldots,z_p\}$ be the set of punctures and let $\mathcal{Z}$ be the group of powersets of $\{z_1,\ldots,z_p\}$ with respect to the group operation $\ominus$.
Note that $\mathcal{Z}\cong\ZZ_2^p$.
For each $\fbar \in \mg$, the homeomorphism $f$ induces an automorphism of $\mathcal{Z}$. This defines an action of $\mg$ on $\mathcal{Z}$.
\begin{lemma}\label{lem sd}
$\mgm$ is isomorphic to the semidirect product $\mathcal{Z}\rtimes \mg$ with respect to the above action.
\end{lemma}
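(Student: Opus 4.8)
The plan is to prove this by exhibiting the obvious candidate isomorphism and checking that the product engineered into the definition of $\mgm$ is literally the semidirect-product multiplication, with the two factors written in the opposite order. The statement is thus almost a bookkeeping identity, and the only point demanding genuine care is the well-definedness of the action of $\mg$ on $\mathcal{Z}$.

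First I would pin down the action precisely. Any $f\in\homeo$ maps $M$ to $M$ and, being a homeomorphism of the surface with boundary, maps the interior to itself; hence it restricts to a permutation of the puncture set $\{z_1,\ldots,z_p\}$. This induces $\mathcal{Z}\to\mathcal{Z}$, $\mathcal{P}\mapsto f(\mathcal{P})$, which is an automorphism of $(\mathcal{Z},\ominus)$ since any bijection $f$ on punctures satisfies $f(A\ominus B)=f(A)\ominus f(B)$. To see that this descends to $\mg=\homeo/\homeozero$, I would observe that any $g\in\homeozero$ fixes $M$ pointwise: evaluating the isotopy relative to $M$ at $t=0$ gives $g(x)=H(x,0)=x$ for all $x\in M$, so $g$ fixes every puncture and acts trivially on $\mathcal{Z}$. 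Consequently the automorphism of $\mathcal{Z}$ depends only on the class $\bar f\in\mg$, giving a well-defined action $\bar f\cdot\mathcal{P}=f(\mathcal{P})$ and hence a well-defined group $\mathcal{Z}\rtimes\mg$.

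Next I would define $\Psi:\mgm\to\mathcal{Z}\rtimes\mg$ by $\Psi(\bar f,\mathcal{P})=(\mathcal{P},\bar f)$, which simply reverses the order of the two components and is visibly a bijection with inverse $(\mathcal{P},\bar f)\mapsto(\bar f,\mathcal{P})$. It then remains to check that $\Psi$ is a homomorphism. The semidirect-product law is $(\mathcal{P}_1,\bar f_1)(\mathcal{P}_2,\bar f_2)=(\mathcal{P}_1\ominus(\bar f_1\cdot\mathcal{P}_2),\,\bar f_1\bar f_2)=(\mathcal{P}_1\ominus f_1(\mathcal{P}_2),\,\bar f_1\bar f_2)$, so on one hand $\Psi\big((\bar f_1,\mathcal{P}_1)(\bar f_2,\mathcal{P}_2)\big)=\Psi(\bar f_1\bar f_2,\,\mathcal{P}_1\ominus f_1(\mathcal{P}_2))=(\mathcal{P}_1\ominus f_1(\mathcal{P}_2),\,\bar f_1\bar f_2)$, and on the other hand $\Psi(\bar f_1,\mathcal{P}_1)\,\Psi(\bar f_2,\mathcal{P}_2)=(\mathcal{P}_1,\bar f_1)(\mathcal{P}_2,\bar f_2)=(\mathcal{P}_1\ominus f_1(\mathcal{P}_2),\,\bar f_1\bar f_2)$. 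These coincide, so $\Psi$ is an isomorphism.

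I do not expect a real obstacle here. The one step worth stating carefully is the descent of the action to the quotient $\mg$, i.e.\ that the permutation of punctures is an invariant of the mapping class $\bar f$ and not merely of the representative homeomorphism $f$; this is exactly what the observation that $\homeozero$ fixes $M$ pointwise provides. Everything else is immediate because the product defining $\mgm$ was set up to match the semidirect-product multiplication verbatim.
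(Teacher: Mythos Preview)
Your proof is correct and follows the same approach as the paper: both verify that the multiplication defined on $\mgm$ coincides with the semidirect-product multiplication on $\mathcal{Z}\rtimes\mg$. Your version is simply more explicit, spelling out the well-definedness of the action on $\mathcal{Z}$ and the component-swapping bijection, whereas the paper dispatches the lemma in one line by writing down the semidirect-product law and observing it agrees with the definition of the product in $\mgm$.
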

\begin{proof}
The product in $\mathcal{Z}\rtimes\mg$ is  defined as
\[(\fbar_1,\mathcal{P}_1)(\fbar_2,\mathcal{P}_2)=(\fbar_1\fbar_2,\mathcal{P}_1\ominus f_1(\mathcal{P}_2)),\]
which proves the statement.
 \end{proof}

\begin{remark}\label{rem 38}
If the surface has precisely one puncture, then for each $\fbar\in \mg$ the homeomorphism $f$ must fix the puncture, hence the action of $\mg$ on  $\mathcal{Z}$ is trivial, whence $\mgm$ is the direct product $\mathcal{Z}\times \mg$.
\end{remark}

\begin{corollary}\label{cor 37}
\begin{itemize}
\item[{\rm (1)}] $\mg$ is a subgroup of $\mgm$;
\item[{\rm (2)}] $\mathcal{Z}$ is a normal subgroup of $\mgm$;
\item[{\rm (3)}] $\mgm$ is generated by the elements $(\bar f, z)$ where $\bar f$ runs over all elements of $\mg$ and $z$ runs over all punctures;
\item [{\rm (4)}] $(1,z)(1,z)=(1,\emptyset)$, more generally $(1,\mathcal{ P})^m=(1, \emptyset)$ if $m$ is even, and $(1,\mathcal{ P})^m=(1,\mathcal{P})$ if $m$ is odd;
\item[{\rm (5)}] If $z,z'$ are two punctures such that  $f$ maps  $z$ to $z'$, then
\[ \big(1, z'\big)\,\big(\bar f,\emptyset\big) = \big(\bar f,\emptyset\big)\,\big(1,z\big).\]
\end{itemize}
\end{corollary}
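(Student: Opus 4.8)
The plan is to derive all five statements directly from the explicit product formula $(\fbar_1,\mathcal{P}_1)(\fbar_2,\mathcal{P}_2)=(\fbar_1\fbar_2,\mathcal{P}_1\ominus f_1(\mathcal{P}_2))$ together with the semidirect product description of Lemma \ref{lem sd}. None of the five parts requires machinery beyond this formula; the work is to organise the bookkeeping and handle one mildly degenerate case in part (3).

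First I would dispose of (1) and (2) using Lemma \ref{lem sd}. Since $\mgm\cong\mathcal{Z}\rtimes\mg$, the subgroup $\mathcal{Z}$ is the kernel of the projection onto $\mg$ and is therefore normal, which is (2), while $\mg$ embeds as the complement $\{(\fbar,\emptyset)\}$, which is (1). For the latter one checks closure directly: $(\fbar_1,\emptyset)(\fbar_2,\emptyset)=(\fbar_1\fbar_2,\emptyset)$, because $f_1(\emptyset)=\emptyset$, so $\fbar\mapsto(\fbar,\emptyset)$ is an injective group homomorphism. Parts (4) and (5) are then one-line computations from the product formula. For (4), the identity homeomorphism fixes every puncture, so $(1,\mathcal{P})(1,\mathcal{P})=(1,\mathcal{P}\ominus\mathcal{P})=(1,\emptyset)$; hence $(1,\mathcal{P})$ has order at most two and $(1,\mathcal{P})^m$ is $(1,\emptyset)$ or $(1,\mathcal{P})$ according to the parity of $m$. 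For (5), under the hypothesis $f(z)=z'$ both sides evaluate to $(\fbar,z')$: the left-hand side is $(1,z')(\fbar,\emptyset)=(\fbar,\,z'\ominus\emptyset)=(\fbar,z')$, and the right-hand side is $(\fbar,\emptyset)(1,z)=(\fbar,\,\emptyset\ominus f(z))=(\fbar,z')$.

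The only part requiring a genuine (if short) argument is the generation statement (3). I would first observe that products of the single-puncture elements $(1,z)$ exhaust $\mathcal{Z}$: for distinct punctures $(1,z)(1,z')=(1,\{z,z'\})$, and iterating, the symmetric difference of distinct singletons is their union, so every $(1,\mathcal{P})$ is such a product. Given an arbitrary $(\bar g,\mathcal{P})$, I would write it as $(1,\mathcal{P})(\bar g,\emptyset)$, which holds since $1(\emptyset)=\emptyset$. It then remains to express $(\bar g,\emptyset)$ through the generators $(\fbar,z)$, each of which carries a single, hence nonempty, puncture. Choosing any puncture $z$ and using that $g$ permutes the punctures (so $g^{-1}(z)$ is again a puncture), I would compute $(\bar g,z)(1,g^{-1}(z))=(\bar g,\,z\ominus g(g^{-1}(z)))=(\bar g,\,z\ominus z)=(\bar g,\emptyset)$, exhibiting $(\bar g,\emptyset)$ as a product of two generators. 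Thus every element of $\mgm$ is a product of generators of the form $(\fbar,z)$.

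The main obstacle, such as it is, is precisely this $\mathcal{P}=\emptyset$ subtlety in (3): because the prescribed generators all carry a nonempty puncture, one cannot reach $(\bar g,\emptyset)$ in one step and must cancel the spurious puncture using the order-two relation of part (4). Everything else is a routine unwinding of the semidirect product structure, so I would present (1), (2), (4), (5) tersely and spend the bulk of the exposition on the generation argument.
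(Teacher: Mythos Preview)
Your proposal is correct and follows the same route as the paper: the paper's entire proof reads ``(1), (2) and (3) are direct consequences of Lemma~\ref{lem sd}, and (4) and (5) are easy computations,'' and you have simply unpacked these computations. Your handling of (3) is in fact more careful than the paper's terse pointer, since you notice that the generators $(\bar f,z)$ all carry a nonempty second coordinate and explicitly manufacture $(\bar g,\emptyset)$ via the cancellation $(\bar g,z)(1,g^{-1}(z))$; this implicitly uses that there is at least one puncture, which is harmless in context.
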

\begin{proof}
(1),(2) and (3) are direct consequences of Lemma \ref{lem sd}, and (4) and (5)  are easy computations.
 
\end{proof}

 \subsection{Mapping class group and cluster automorphism group}
 
We now show that  the group of cluster automorphisms has a subgroup isomorphic to  $\mgm$. The change of tagging induces a cluster automorphism which is described in the following Lemma.

Let $z$ be a puncture in $(S,M)$ and $\za$ any arc. We denote by $\za^z$ the arc that is isotopic to $\za$ and has the opposite tagging at each endpoint that is equal to $z$. Essentially there are three different cases which are illustrated in Figure \ref{figtag}, namely $\za$ can have one endpoint, both endpoints or no endpoint equal to $z$.

\begin{figure}
\begin{center}
\scalebox{0.8}{\input{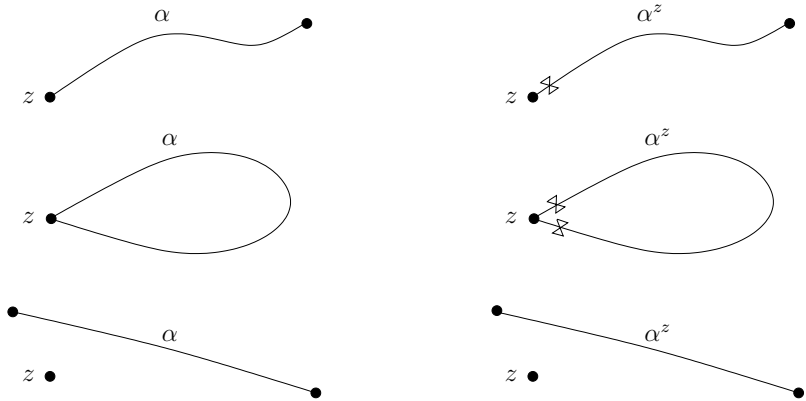}}
\caption{Change of tagging at $z$}\label{figtag}
\end{center}
\end{figure}

 \begin{lemma}
\label{lemtag} Assume $(S,M)$ is not a closed surface with exactly one puncture, and assume $T$ is a triangulation of $(S,M)$. Then,
for every puncture $z$ of $(S,M)$, the automorphism $\psi_z:\mathcal{A}\to\mathcal{A}$ defined by $\psi_z(x_\tau)=x_{\tau^z}$, for every arc $\tau\in T$, where $x_\tau$ is the cluster variable corresponding to $\tau$, and
extended to the other cluster variables by the algebra homomorphism
properties, is a cluster automorphism in $\aut$.
\end{lemma}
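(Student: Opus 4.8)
The plan is to realize $\psi_z$ as the algebra map induced by the geometric operation $\kappa_z\colon\tau\mapsto\tau^z$ of reversing the tagging at $z$, and to transfer the combinatorial properties of $\kappa_z$ to $\psi_z$ through the dictionary of Remark \ref{remtag}. First I would observe that $\kappa_z$ is an involution on the set of tagged arcs, and that it preserves compatibility: by Definition \ref{compatible}, reversing the tags at $z$ does not change the underlying arcs $\alpha^0,\beta^0$, and, being applied simultaneously to every end meeting $z$, it preserves the tag-matching conditions (a) and (b). Hence $\kappa_z$ carries tagged triangulations to tagged triangulations bijectively; in particular $T^z=\{\tau^z\mid\tau\in T\}$ is a tagged triangulation. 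Since $(S,M)$ is not a closed surface with exactly one puncture, Remark \ref{remtag} provides a bijection between tagged arcs and cluster variables under which tagged triangulations correspond to clusters and flips to mutations; this is precisely where the hypothesis on $(S,M)$ is used. Consequently $\psi_z$ sends the transcendence basis $\mathbf{x}_T$ to the transcendence basis $\mathbf{x}_{T^z}$, and therefore extends to an automorphism of the ambient field $\mathcal{F}$.

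Next I would show that $\kappa_z$ commutes with flips. If $T'$ is obtained from $T$ by flipping $\gamma$ to $\gamma'$, then $(\gamma')^z$ is compatible with $T^z\setminus\{\gamma^z\}=(T\setminus\{\gamma\})^z$ and differs from $\gamma^z$ (as $\kappa_z$ is injective); by uniqueness of the flipped arc, $(\gamma')^z$ is the flip of $\gamma^z$ in $T^z$, so $(\gamma^z)'=(\gamma')^z$. The crucial point is then the \emph{invariance of the quiver}: I claim the bijection $\tau\mapsto\tau^z$ induces an isomorphism $Q_T\cong Q_{T^z}$, and this is the step I expect to be the main obstacle. The quiver $Q_T$ is built in \cite{FST} from the clockwise orderings of the sides inside the ideal triangles together with the special contributions of radii and loops of self-folded triangles; reversing the tags at $z$ leaves the underlying curves and the orientation of $S$ unchanged, hence all these clockwise orderings, so that each contribution $b^\Delta_{ij}$ is matched by the corresponding contribution for $T^z$. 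This has to be verified by inspecting the local configurations of arcs at $z$, including the notched and self-folded cases permitted by Definition \ref{compatible}, and constitutes the computational heart of the proof. Because the orderings are preserved rather than reversed, the isomorphism respects the orientation of the arrows, i.e.\ it is $Q_T\cong Q_{T^z}$ and not $Q_T^{\textup{op}}\cong Q_{T^z}$.

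Finally I would combine these facts. Since the exchange relation attached to flipping $\gamma$ is determined by the arrows of $Q_T$ at the vertex $p_{x_\gamma}$, the isomorphism $Q_T\cong Q_{T^z}$ together with $(\gamma^z)'=(\gamma')^z$ shows that $\psi_z$ carries the exchange relation for $\gamma$ in $T$ to the exchange relation for $\gamma^z$ in $T^z$; cancelling the common factor $x_{\gamma^z}$ yields $\psi_z(x_{\gamma'})=x_{(\gamma')^z}$. Inducting on the number of flips needed to reach an arbitrary tagged arc from $T$, I conclude that $\psi_z(x_\tau)=x_{\tau^z}$ for \emph{every} tagged arc $\tau$. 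Thus $\psi_z$ permutes the set of cluster variables (and is an involution, being induced by the involution $\kappa_z$), so it restricts to a $\ZZ$-algebra automorphism of $\mathcal{A}$ that sends every cluster to a cluster; by Corollary \ref{cor2.7bis} it is a cluster automorphism. As the induced vertex bijection $p_{x_\tau}\mapsto p_{x_{\tau^z}}$ extends to the orientation-preserving isomorphism $Q_T\cong Q_{T^z}$ established above, Lemma \ref{lemma1}(a) shows that $\psi_z$ is \emph{direct}, i.e.\ $\psi_z\in\aut$, as claimed.
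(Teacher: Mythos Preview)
Your proof is correct, but it does substantially more work than the paper's. The paper observes directly that the seed $(\mathbf{x}_{T^z},B_{T^z})$ satisfies $B_{T^z}=B_T$ (equivalently, that the bijection $p_{x_\tau}\mapsto p_{x_{\tau^z}}$ extends to a quiver isomorphism $Q_T\cong Q_{T^z}$) and then applies Lemma~\ref{lemma1}(a) to the single seed $(\mathbf{x}_T,Q_T)$: this already yields that $\psi_z$ is a direct cluster automorphism, with no need to track flips or to invoke Corollary~\ref{cor2.7bis}. Your route---showing $\kappa_z$ commutes with flips, propagating $\psi_z(x_\tau)=x_{\tau^z}$ to every tagged arc by induction, and then concluding via Corollary~\ref{cor2.7bis}---is valid and has the pleasant by-product of giving the explicit action of $\psi_z$ on \emph{all} cluster variables, but once you have established $Q_T\cong Q_{T^z}$ the remaining steps are redundant for the statement at hand. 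Note also that your induction implicitly requires $Q_{T'}\cong Q_{(T')^z}$ for every intermediate triangulation $T'$; your argument via clockwise orderings does cover this, but it is worth making explicit.
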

\begin{proof}
The cluster algebra $\mathcal{A}$ has an initial seed $(\mathbf{x}_T,B_T)$ associated to the triangulation $T$. 
As noted in \cite[Definition 9.2]{FST}, the compatibility of tagged arcs is 
invariant with respect to a simultaneous change of all tags at a given
puncture, therefore the set $T'=\{\tau_1^z,\ldots,\tau_n^z\}$ also is a triangulation, and hence it  defines  another seed $(\mathbf{x}_{T'},B_{T'})$ of $\mathcal{A}$, where 
\[ B_{T'}=\left(b_{\tau_i^z\tau_j^z}\right)_{ij}=\left(b_{\tau_i\tau_j}\right)_{ij} =B_T.\]
The automorphism $\psi_z$ sends the seed $(\mathbf{x}_T,B_T)$ to the seed $(\mathbf{x}_{T'},B_{T'})$, which shows (CA1). 
Moreover, since $B_{T^{^{\prime }}}=B_{T},$ the quivers
corresponding to these two matrices are equal. By Lemma \ref{lemma1}, $\psi _{z}\in $
Aut$^{+}\mathcal{A}$.
%Moreover, for any sequence of mutations $\mu_T$ in $\mathcal{A}$, we have
%\[
%\psi_z\mu_T(x_\tau)=
%\psi_z (P(x_{\tau_1},\ldots,x_{\tau_n}))=P(x_{\tau_1}^z,\ldots,x_{\tau_n}^z),\] for some Laurent polynomial $P$.
%On the other hand, if $\mu_{T'}$ denotes the same sequence of mutations as $\mu_T$ but starting at  the seed $(\mathbf{x}_{T'},B_{T'})$, then 
%\[\mu_{T'}\psi_z(x_\tau)=\mu_{T'}(x_{\tau'})=P(x_{\tau_1}^z,\ldots,x_{\tau_n}^z),\] for the same Laurent polynomial $P$.
%This proves condition (CA 2) of the definition of cluster automorphisms. 
%
%It also shows that the definition of $\psi_z$ does not depend on the choice of the triangulation $T$, and that for any arc $\za$, we have $\psi_z(x_\za)=x_{\za^z}$.
%In particular, $\psi_z$ maps clusters to clusters. Thus $\psi_z $ is a cluster automorphism. 
 \end{proof} 
 
\begin{remark}
In the case of a closed surface with exactly one puncture, $\psi_z$ is not defined. Indeed, in this case all arcs of a triangulation start and end at the puncture and thus must be tagged in the same way. Therefore flips do not change the tagging.  
\end{remark}

\begin{theorem}
\label{theorem mg} 
Let $(S,M)$ be a surface with $p$ punctures. 
Then $\mg$ is isomorphic to a subgroup of $\aut$. If moreover $p\ge 2$, or if $\partial S\ne\emptyset$, then $\mgm$ is isomorphic to a subgroup of $\aut$.
%\begin{enumerate}
%\item If $(S,M)$ is not a closed surface with exactly one puncture then
%$\mgm$ is isomorphic to a subgroup of $\aut$.
%\item If $(S,M)$ is  a closed surface with exactly one puncture then
%$\mg $ is isomorphic to a subgroup of $\aut$.
%\end{enumerate}
\end{theorem}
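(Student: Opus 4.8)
The plan is to realise both groups inside $\aut$ through their natural action on (tagged) arcs, exploiting the dictionary of Remark \ref{remtag} between tagged arcs and cluster variables, between tagged triangulations and clusters, and between flips and mutations. Throughout I use that, under the hypothesis $p\ge 2$ or $\partial S\ne\emptyset$, the surface $(S,M)$ is not a closed surface with exactly one puncture, so that this dictionary is a genuine bijection and Lemma \ref{lemtag} is available; for the first statement, concerning $\mg$ alone, the same construction works verbatim with ordinary (untagged) arcs in the exceptional closed one-puncture case, so that statement holds with no hypothesis.

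First I would construct $F\colon\mg\to\aut$. Fix an initial seed $(\mathbf{x}_T,Q_T)$ coming from a triangulation $T$. A homeomorphism $f\in\homeo$ carries $T$ to a triangulation $f(T)$ and, being orientation preserving, preserves the clockwise order used to build the quiver; hence the induced bijection $\tau\mapsto f(\tau)$ on arcs carries $Q_T$ \emph{isomorphically}, not anti-isomorphically, onto $Q_{f(T)}$. Setting $F(\bar f)(x_\tau)=x_{f(\tau)}$ on $\mathbf{x}_T$ and extending to a $\ZZ$-algebra automorphism of the ambient field as in Remark \ref{rmk_initial}(b), I obtain a map whose value on the initial cluster is $\mathbf{x}_{f(T)}$, with $Q(F(\bar f)(\mathbf{x}_T))=Q_{f(T)}\cong Q_T$. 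By condition (a) of Lemma \ref{lemma1}, $F(\bar f)$ is therefore a direct cluster automorphism, so $F(\bar f)\in\aut$. An element of $\homeozero$ fixes every isotopy class of arcs, so $F(\bar f)$ depends only on the class $\bar f\in\mg$, and $F$ is a homomorphism because $f\mapsto(\tau\mapsto f(\tau))$ is a group action.

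Next I would extend $F$ to $\mgm$ using the decomposition $\mgm\cong\mathcal{Z}\rtimes\mg$ of Lemma \ref{lem sd}. For each puncture $z$, Lemma \ref{lemtag} supplies a tagging-change automorphism $\psi_z\in\aut$; since switching a tag twice is trivial and switches at distinct punctures are independent, the $\psi_z$ are commuting involutions, so $\mathcal{P}\mapsto\prod_{z\in\mathcal{P}}\psi_z$ is a homomorphism $\mathcal{Z}\cong\ZZ_2^{p}\to\aut$. A direct computation gives the conjugation relation $F(\bar f)\,\psi_z\,F(\bar f)^{-1}=\psi_{f(z)}$ --- applying $f$ after switching the tag at $z$ equals switching the tag at $f(z)$ after applying $f$ --- which is exactly the action of $\mg$ on $\mathcal{Z}$ defining the semidirect product. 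By the universal property of $\mathcal{Z}\rtimes\mg$ these data assemble into a homomorphism $\Phi\colon\mgm\to\aut$ with $\Phi(\bar f,\mathcal{P})=\bigl(\prod_{z\in\mathcal{P}}\psi_z\bigr)\circ F(\bar f)$, which lands in $\aut$ because each factor is direct.

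Finally I would prove that $\Phi$ is injective (this contains injectivity of $F$). Suppose $\Phi(\bar f,\mathcal{P})=\mathrm{id}$. Forgetting tags, each $\psi_z$ fixes the underlying untagged arc, so $F(\bar f)$ fixes every untagged arc, i.e.\ $f$ fixes every isotopy class of arcs. The decisive input is the topological fact (the Alexander method, see \cite{FM}) that such an $f$ must be isotopic to the identity relative to $M$, whence $\bar f=1$ in $\mg$. Then $\Phi(1,\mathcal{P})=\prod_{z\in\mathcal{P}}\psi_z=\mathrm{id}$; but if some $z\in\mathcal{P}$, one can choose a tagged arc incident to $z$ whose tag is genuinely switched, producing a distinct cluster variable, a contradiction. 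Hence $\mathcal{P}=\emptyset$ and $(\bar f,\mathcal{P})=(1,\emptyset)$. I expect this faithfulness statement to be the only real obstacle; the remaining steps are bookkeeping with the arc--variable dictionary and with the orientation- and tagging-invariance of the quiver $Q_T$.
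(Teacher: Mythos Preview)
Your approach mirrors the paper's: define $F(\bar f)$ on the initial cluster by $x_\tau\mapsto x_{f(\tau)}$, check it lands in $\aut$, extend to $\mgm$ via the tagging involutions $\psi_z$ and the conjugation relation $F(\bar f)\psi_z F(\bar f)^{-1}=\psi_{f(z)}$, then prove injectivity. The remaining differences are cosmetic---you invoke Lemma~\ref{lemma1}(a) where the paper uses Corollary~\ref{cor2.7bis}, and you cite the Alexander method where the paper argues isotopy triangle by triangle.

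There is, however, one step you skip that the paper isolates explicitly and that you genuinely need: the identity $F(\bar f)(x_\alpha)=x_{f(\alpha)}$ for \emph{every} (tagged) arc $\alpha$, not only for $\alpha\in T$. Your claim that ``$F$ is a homomorphism because $f\mapsto(\tau\mapsto f(\tau))$ is a group action'' does not follow without it, since $F(\bar f)F(\bar g)(x_\tau)=F(\bar f)(x_{g(\tau)})$ and $g(\tau)$ is typically not in $T$; likewise your injectivity step ``$F(\bar f)$ fixes every untagged arc, i.e.\ $f$ fixes every isotopy class of arcs'' presupposes that $F(\bar f)$ acts on an arbitrary $x_\alpha$ by the geometric action of $f$ on $\alpha$. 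The paper proves this by writing $\alpha=\mu_T(\tau)$ for some flip sequence and some $\tau\in T$, then using that $F(\bar f)$, being a cluster automorphism, commutes with the corresponding mutation sequence while $f$ commutes with the flips. Once you insert this argument, your proof is complete and essentially identical to the paper's.
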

\begin{proof}
We start by showing that $\mg$ is isomorphic to a subgroup of $\aut$. Fix a triangulation $T$ and let $\mathbf{x}_T$ be the  corresponding cluster. Denote  the elements of $T$ by $\tau_1,\ldots,\tau_n$ and those of $\mathbf{x}_T$ by $x_{\tau_1},\ldots,x_{\tau_n}$.   Then $\{x_{\tau_1},\ldots,x_{\tau_n}\}$ is a transcendence basis of the ambient field $\mathcal{F}$ of the cluster algebra. 

For any element $f\in \homeo$, let $\fbar\in\mg$ denote its   class in the mapping class group. 
Define a map $\phi:\mg\to \aut$ by letting $\phi(\fbar)$ be the map from $\mathcal{A}$ to $\mathcal{A}$ defined on the basis by $\phi(\fbar)(x_{\tau_i})=x_{f(\tau_i)}$ and extended to $\mathcal{A}$ by the algebra homomorphism properties. 
We   show that $\phi$ is an injective group homomorphism.

To show that the definition of $\phi $ does not depend on the choice of the representative $f$, assume that $f$ is isotopic to the identity relative to $M$. Then $f(\tau_i)$ is isotopic to $\tau_i$ relative to $M$, and thus, $f(\tau_i)$ and $\tau_i$ represent the same arc. It follows that $\phi$ does not depend on the choice of $f$.

Next we show that $\phi(\fbar)$ is a cluster automorphism.
Since $f$ is a homeomorphism, any two arcs $\za,\zb$ in $(S,M)$ which are compatible, have compatible images $f(\za),f(\zb)$ in $(S,M)$. Therefore any triangulation of $(S,M)$ is mapped under $f$ to a triangulation of $(S,M)$. 
%Thus $\phi(\fbar)$ maps clusters to clusters and, in particular, $\phi(\fbar)(\mathbf{x}_T)$ is a transcendence basis of $\mathcal{F}$. This implies that $\phi(\fbar)$ is an isomorphism of algebras which satisfies condition (CA1) of the definition of cluster automorphisms.
 Thus $\phi \left( \overline{f}\right) $ maps clusters to clusters.
By Corollary \ref{cor2.7bis},  $\phi \left( \overline{f}\right) $ is a cluster
automorphism. 
%
%It remains to show condition (CA 2), that is, $\phi(\fbar)$ commutes with mutations. Since $f$ is a homeomorphism, it maps triangles to triangles, self-folded triangles to self-folded triangles, and quadrilaterals to quadrilaterals. If $\mu$ is a map from one triangulation $T$ to another triangulation $T'$ given by the flip of one arc $\tau$, then, clearly, $f\mu T$ is equal to $\mu'fT$, where $\mu'$ is the flip of the arc $f(\tau)$ in $fT$. Thus 
%\[\phi(\fbar)\mu_{x_\tau,\mathbf{x}_T} (x_\tau) =\phi(\fbar)(x_{\tau'})=x_{f(\tau')}=x_{f(\mu\tau)}=x_{\mu'f(\tau)}=\mu_{f(x_\tau),f(\mathbf{x}_T)} (f(x_\tau)).
%\]
%This completes the proof that $\phi(\fbar)$ is a cluster automorphism. 
Moreover, 
since $f$ is actually an orientation preserving homeomorphism, we have $\phi(\fbar)\in \aut$, and this shows that $\phi$ is well-defined.

Now we show that the definition of $\phi$ does not depend on the choice of the triangulation $T$. That is, let us show that for any 
arc 
 $\za $ in $(S,M)$
\begin{equation}
\label{stern}
x_{f(\za)}=\phi(\fbar)(x_\za).
\end{equation}
Indeed, let $\za $ be an arc in $(S,M)$. Then there is a sequence of flips $\mu_T$ such that $\mu_T\,\tau=\za$ for some $\tau\in T$. Let $\mubar$ denote the corresponding sequence
of mutations in $\mathcal{A}$. Then
$x_{f(\za)}= x_{f(\mu_T\,\tau)}=x_{\mu_{f(T)}f(\tau)}$, where the last identity holds because $f$ commutes with flips. Since flips correspond to mutations, we get
$x_{f(\za)}= \mubar_{\phi(\fbar)(\mathbf{x}_T)}x_{f(\tau)}$, 
which by definition of $\phi(\fbar)$ is equal to 
$\mubar_{\phi(\fbar)(\mathbf{x}_T)}\phi(\fbar)(x_{\tau})$.
Now using the fact that $\phi(\fbar)$ is a cluster automorphism, we get 
$x_{f(\za)}= \phi(\fbar)(\mubar_{\mathbf{x}_T}(x_\tau))$, which is equal to 
$\phi(\fbar)(x_{\mu_{T}(\tau)})=\phi(\fbar)(x_\za)$, because flips correspond to mutations.

 To show that $\phi$ is a group  homomorphism, let $\fbar,\gbar\in \mg$, then for any arc $\za$,
 \[\phi(\fbar\circ\gbar)(x_\za)=x_{f\circ g (\za)} =\phi(\fbar)(x_{g(\za)})=\phi(\fbar)\big(\phi(\gbar)(x_\za)\big).
 \]

Finally, we show that $\phi $ is injective. Let $\fbar\in\textup{Ker}\,\phi$. Then $\phi(\fbar)=1_{\mathcal{A}}$, and for any arc $\za$ in $(S,M)$, we have $x_\za=\phi(\fbar)(x_\za)=x_{f(\za)}$, where the last identity holds by equation (\ref{stern}). Thus $f(\za)$ is isotopic to $\za$, for every arc $
\za$ in $(S,M)$, and in particular, $f $ fixes each point in $M$. Thus for every triangle $\zD$, $f$ fixes the points of $\zD$  and maps the arcs of $\zD$ to isotopic arcs. Therefore $f$ is isotopic to  the identity on each triangle, and hence on the whole surface. This shows
that $f$ is zero in $\mg$, and hence $\phi$ is injective.
This shows that $\mg$ is isomorphic to a subgroup of $\aut$. 

Now suppose that $p\ge2$ or $\partial S \ne \emptyset$, For any puncture $z$   let $\psi_z$ be the cluster automorphism of Lemma \ref{lemtag}. 
Define a map $\chi:\mgm\to \aut$ by $\chi(\fbar,\mathcal{P})=(\prod_{z\in\mathcal{P}}\psi_z)\phi(\fbar)$.
In order to show that $\chi$ is a group homomorphism, we compute
\begin{equation}\label{eq44} \chi( (\fbar_1,\mathcal{P}_1)(\fbar_2,\mathcal{P}_2) ) = (\prod_{z\in \mathcal{P}_1\ominus f_1(\mathcal{P}_2)} \psi_z) \phi(\fbar_1\fbar_2)\end{equation}
and on the other hand
 \begin{eqnarray}
 \chi (\fbar_1,\mathcal{P}_1)\chi(\fbar_1,\mathcal{P}_1)&=& (\prod_{z\in \mathcal{P}_1} \psi_z) \phi(\fbar_1)(\prod_{z\in \mathcal{P}_2} \psi_z) \phi(\fbar_2) \nonumber \\
&=&(\prod_{z\in \mathcal{P}_1} \psi_z) (\prod_{z\in f_1(\mathcal{P}_2)} \psi_z) \phi(\fbar_1)\phi(\fbar_2),\label{eq45}\end{eqnarray}
where the last identity follows from the equation $\phi(\fbar)\psi_z=\psi_{f(z)}\phi(\fbar)$.
The equality of the expressions in equations (\ref{eq44}) and (\ref{eq45}) now follows because $\phi $ is a homomorphism and because $\psi_z^2=1$.
This shows that $\chi $ is a homomorphism. To show that $\chi $ is injective, suppose that $\chi(\fbar,\mathcal{P}) $ is the identity automorphism. Then we have $\mathcal{P}=\emptyset$ and $\chi(\fbar,\emptyset)=\phi(\fbar)$, and from the  injectivity of $\phi$ we get that $\fbar=1$. Thus $\chi $ is injective.
  \end{proof}

{The theorem does not describe the whole group $\aut$ but only a subgroup, and it is not true in general that $\aut= \mgm$. However, the only cases we know where this equality does not hold are the surfaces corresponding to the acyclic types $\mathbb{D}_4$ and $\tilde{\mathbb{D}}_4$. These two types correspond to star shaped quivers with $3$ and $4$ branches which have $S_3$-symmetry and $S_4$-symmetry, respectively.  However the corresponding surfaces do not have such symmetries.
  We conjecture that these are the only exceptions, since we know no other surface that gives rise to a quiver having an $S_\ell$-symmetry, with $\ell>2$.
}

\begin{conj}\label{conj}
Let $(S,M)$ be any surface different from the disc with exactly one puncture and four marked points on the boundary or the disc with exactly two punctures and two marked points on the boundary. 
%the punctured square and the twice punctured digon. 
Then 
\begin{enumerate}
\item if $(S,M)$ is not a closed surface with exactly one puncture then
\[\aut= \mgm.\]
\item if $(S,M)$ is  a closed surface with exactly one puncture then
\[\aut= \mg.\]
\end{enumerate}
\end{conj}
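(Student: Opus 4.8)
The plan is to upgrade the injection produced in Theorem \ref{theorem mg} to an isomorphism. Since we already possess an injective group homomorphism $\chi:\mgm\to\aut$ (respectively $\phi:\mg\to\aut$ in the once-punctured closed case), it suffices to establish \emph{surjectivity}, that is, to show that every cluster automorphism is induced by a marked mapping class. The strategy is to translate an arbitrary $f\in\aut$ into a purely combinatorial-topological datum — an automorphism of the complex of tagged arcs of $(S,M)$ — and then to invoke a rigidity theorem identifying the automorphism group of that complex with $\mgm$.

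First I would construct, from a cluster automorphism $f$, an induced permutation $\hat f$ of the set of tagged arcs. By \cite{FST} (see Remark \ref{remtag}), whenever $(S,M)$ is not a closed surface with exactly one puncture there is a bijection between tagged arcs and cluster variables under which tagged triangulations correspond to clusters and flips to mutations; since $f$ permutes the cluster variables of $\mathcal A$, it induces such a permutation $\hat f$. The point is then to check that $\hat f$ respects the combinatorial structure. Because $f$ sends clusters to clusters (property (CA1), valid for \emph{every} seed by Proposition \ref{prop1}), the map $\hat f$ sends tagged triangulations to tagged triangulations and hence preserves compatibility of tagged arcs (any compatible pair extends to a common triangulation); and because $f$ commutes with mutations (property (CA2)), the map $\hat f$ commutes with flips. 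Thus $\hat f$ is an automorphism of the tagged arc complex, equivalently of the flip graph of $(S,M)$.

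The heart of the argument — and what I expect to be the main obstacle — is a rigidity statement of Ivanov type: every automorphism of the tagged arc complex of $(S,M)$ is induced by an element of $\mgm$, that is, by a homeomorphism together with a choice of punctures at which to switch taggings. Granting this, $\hat f$ is realised by some $(\bar g,\mathcal P)$, whence $\chi(\bar g,\mathcal P)$ and $f$ agree on all cluster variables and therefore coincide as $\ZZ$-algebra automorphisms; this yields surjectivity and the desired isomorphism $\aut\cong\mgm$. In the once-punctured closed case the bijection is instead with untagged arcs (Remark \ref{remtag}), and the same scheme delivers $\aut\cong\mg$ with $\phi$ in place of $\chi$.

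Proving the required rigidity is precisely where the difficulty, and the exceptions, reside. One must show that the combinatorial symmetries of the (tagged) arc complex are all of topological origin, adapting the curve-complex rigidity results of Ivanov, Korkmaz and Luo to the tagged setting and disposing of the low-complexity cases by hand. The two surfaces excluded in the conjecture — the disc with one puncture and four boundary marked points (type $\mathbb D_4$) and the disc with two punctures and two boundary marked points (type $\tilde{\mathbb D}_4$) — are exactly the places where this fails: their quivers are star-shaped with three, respectively four, branches and carry $S_3$- respectively $S_4$-symmetries of the arc complex that are realised by no homeomorphism of the disc. Showing that no further such exceptional symmetries arise for any other surface is the genuinely hard, and still open, part of the statement.
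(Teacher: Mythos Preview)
The statement you are addressing is presented in the paper as a \emph{conjecture}, not a theorem; the paper contains no proof of it. What the paper does establish are two special cases, in Theorems \ref{theorem a} and \ref{theorem d}: the unpunctured disc and annulus, and the once- or twice-punctured disc with sufficiently many boundary marked points. In those cases the paper does \emph{not} proceed via arc-complex rigidity. Instead it computes $\aut$ explicitly from the acyclic theory of Section \ref{sect 3.3} (via automorphisms of the transjective component of the cluster category), independently computes $\mg$ and $\mgm$ by elementary surface topology (rotations, Dehn twists, braid generators), and then matches the two answers by writing down an explicit isomorphism and checking relations.

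Your outline is therefore a genuinely different, and more conceptual, approach aimed at the full conjecture: reduce surjectivity of $\chi$ (or $\phi$) to an Ivanov-type rigidity statement for the tagged arc complex. The reduction itself is sound. Using Proposition \ref{prop1}, Corollary \ref{cor2.7bis} and Remark \ref{remtag}, any $f\in\aut$ does induce a permutation $\hat f$ of tagged arcs sending tagged triangulations to tagged triangulations and commuting with flips; since $f^{-1}$ is again a cluster automorphism, $\hat f$ preserves incompatibility as well, so it is an honest automorphism of the tagged arc complex. You also correctly locate the content of the conjecture in the rigidity step and explain why the $\mathbb D_4$ and $\tilde{\mathbb D}_4$ surfaces must be excluded. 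What your proposal does not supply---and what the paper does not supply either---is the rigidity theorem itself: that every combinatorial automorphism of the tagged arc complex comes from $\mgm$ (respectively $\mg$). That is precisely the open part, so your proposal is a correct reformulation of the problem rather than a proof.
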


We can prove this conjecture using the results from section \ref{sect 3.3} in the cases where the   cluster algebra from the surface is of acyclic type.

\begin{theorem}\label{theorem a}
Let $(S,M)$ be a disc or an annulus without punctures. Then \[\aut=\mg=\mgm.\]
\end{theorem}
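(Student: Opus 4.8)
The plan is to reduce the statement to the injection of Theorem~\ref{theorem mg} and then establish surjectivity by comparing with the groups computed in Section~\ref{sect 3.3}. First, since a disc or an annulus without punctures has $p=0$ punctures, the group $\mathcal{Z}\cong\ZZ_2^{p}$ appearing in Lemma~\ref{lem sd} is trivial, so $\mgm=\mathcal{Z}\rtimes\mg=\mg$; this already yields the equality $\mg=\mgm$. Moreover $\partial S\neq\emptyset$, so Theorem~\ref{theorem mg} provides an injective group homomorphism $\phi:\mg\to\aut$. Hence it only remains to prove that $\phi$ is surjective, that is, that every direct cluster automorphism is induced by an orientation preserving homeomorphism of the surface.

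For the disc, which gives a cluster algebra of type $\mathbb{A}_{n}$, I would identify $\mg$ with a group of ``rotations'' of the $(n+3)$-gon: an orientation preserving homeomorphism of the disc permuting the $n+3$ boundary marked points is determined, up to isotopy relative to $M$, by the induced cyclic permutation of these points, so $\mg$ is cyclic, generated by the rotation moving the marked points by one vertex. On the other hand, Theorem~\ref{theorem_main} together with the values recorded in Table~\ref{table} gives that $\aut$ is the corresponding cyclic group, whose distinguished generator is the Auslander--Reiten translation $\tau$ of $\Gamma_{tr}$ (see Remark~\ref{rem 3.11}). Comparing orders of these finite cyclic groups then forces the injection $\phi$ to be onto, which settles the disc case; the very small polygons, where a rotation may act trivially on the few diagonals, are inspected separately.

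For the annulus, which gives type $\tilde{\mathbb{A}}_{p,q}$, the group $\aut$ is infinite, so surjectivity must be exhibited on generators. I would produce three geometric mapping classes: the rotations $R_{1}$ and $R_{2}$ of the two boundary components, each moving the marked points by one step, and the Dehn twist about the core curve of the annulus. The heart of the argument is to verify, by tracing annular triangulations and their flips through $\phi$, that $\phi(R_{1})$ and $\phi(R_{2})$ are exactly the generators $r_{1}$ and $r_{2}$ of $\aut\cong H_{p,q}$ defined in Section~\ref{sect 3.3}; the defining relations then match automatically, since $R_{1}$ and $R_{2}$ commute and a full rotation of either boundary is the Dehn twist, so that $R_{1}^{p}=R_{2}^{q}$ corresponds to $r_{1}^{p}=r_{2}^{q}$ (consistently with $\tau^{-1}=r_{1}r_{2}$). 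As $r_{1},r_{2}$ generate $H_{p,q}$ and lie in the image, $\phi$ is surjective when $p\neq q$. When $p=q$ there is in addition the orientation preserving involution $\eta$ interchanging the two equinumerous boundary circles; I would check that $\phi(\eta)$ realises the extra factor in $\aut\cong H_{p,p}\rtimes\ZZ_{2}$, giving surjectivity in this case as well.

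The main obstacle is this surjectivity step for the annulus: one must match the purely geometric data (boundary rotations, Dehn twist, and the boundary swap when $p=q$) with the combinatorial automorphisms of the translation quiver $\Gamma_{tr}\cong\ZZ\tilde{\mathbb{A}}_{p,q}$ used in Section~\ref{sect 3.3} to define $r_{1},r_{2}$ and $\tau$. This amounts to translating flips of annular triangulations into the Auslander--Reiten combinatorics and tracking the relation $\tau^{-1}=r_{1}r_{2}$ carefully, together with separate bookkeeping for the boundary-swap symmetry in the $p=q$ case. By comparison the disc case is routine, reducing to the order count above.
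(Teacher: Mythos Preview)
Your plan and the paper's proof coincide on the opening reduction ($p=0$ forces $\mathcal{Z}$ to be trivial, so $\mgm=\mg$, and Theorem~\ref{theorem mg} supplies the injection) and on the disc case (identify $\mg$ with the cyclic group of rotations of the $(n+3)$-gon and match orders against Table~\ref{table}). The genuine divergence is in the annulus case. The paper does \emph{not} attempt to trace $\phi$ on generators or to translate boundary rotations into Auslander--Reiten combinatorics. Instead it computes $\mg$ purely from the topology of the annulus: the rotations of the two boundary circles give commuting infinite cyclic subgroups with $r_1^p=r_2^q$ equal to the Dehn twist, and then, using that the classical mapping class group of the annulus is $\ZZ$ (Remark~\ref{rem mg2}), one argues that every class in $\mg$ is a word in $r_1,r_2$ (together with the boundary-swapping involution when $p=q$). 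This yields the presentation $H_{p,q}$ (respectively $H_{p,p}\rtimes\ZZ_2$) directly, and the proof concludes by comparing with the value of $\aut$ already recorded in Section~\ref{sect 3.3}.

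So what you flag as the ``main obstacle'' --- matching $\phi(R_i)$ with the translation-quiver generators $r_i$ --- is precisely the step the paper avoids. Your route is heavier but would establish the sharper statement that the specific injection $\phi$ of Theorem~\ref{theorem mg} is surjective; the paper's route is quicker and self-contained on the topological side, at the price of literally only exhibiting an abstract isomorphism $\mg\cong\aut$ (which is how the paper reads the equality in the statement, cf.\ the sentence ``Thus we only need to show that $\mg\cong\ZZ_{n+3}$''). Both strategies are sound; you should be aware that the published argument takes the shortcut of computing $\mg$ independently rather than chasing $\phi$.
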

\begin{proof} Since $(S,M)$ has no punctures, we have $\mg=\mgm$.
Suppose first that $(S,M)$ is a disc, and let the number of marked points be $n+3$. Then the cluster algebra is of Dynkin type $\mathbb{A}_n$, and we know from section \ref{sect 3.3} that $\aut\cong \ZZ_{n+3}$. Thus we only need to show that $\mg\cong\ZZ_{n+3}$.

We may assume without loss of generality that the marked points are the points of a  regular polygon, so that  any rotation about the centre of the disc of angle $k \frac{2\pi}{n+3}$, with $k\in \ZZ$, maps $M$ to $M$. Each of these rotations is an orientation preserving homeomorphism of $(S,M)$, and it is isotopic to the identity relative to $M$ if and only if it fixes each point in $M$. This shows that the rotations form a subgroup of $\mg$ isomorphic to $\ZZ_{n+3}$.

Each element of $\mg$ is determined by its values on $M$, because if two orientation preserving homeomorphisms $f,g$ agree on $M$, then $fg^{-1}$ fixes $M$, and we may suppose without loss of generality that $fg^{-1}\in \homeos$. Since $\mathcal{M}od(S,M)=0$, it follows that $fg^{-1}\in \textup{Homeo}_0(S,\partial S)$, and therefore $fg^{-1}\in \homeozero$. 
Since each element  of $\mg$ maps the boundary to itself and preserves orientation, each element can be represented by a rotation. This shows that $\mg\cong\ZZ_{n+3}$.

Now suppose that $(S,M)$ is an annulus, let $C_1,C_2$ be the two boundary components of $(S,M)$, and let $p$ be the number of marked points on $C_1$ and $ q$ be the number of marked points on $C_2$. Then the cluster algebra is of euclidean type $\tilde{\mathbb{A}}_{p,q}$, and we know from section \ref{sect 3.3} that 
\[\aut\cong \left\{ \begin{array}
{ll} H_{p,q} &\textup{if $p\ne q$;}\\
H_{p,p}\times \ZZ_2 &\textup{if $p= q$;}
\end{array}
\right.\]
where $H_{p,q}=\langle r_1,r_2\mid r_1r_2=r_2r_1, r_1^p=r_2^q\rangle.$
As in the case of the disc above, the rotations of each boundary component form a subgroup of $\mg$. Note however that these subgroups are infinite cyclic. We can choose two generators $r_1 $  for the group   given by rotating $C_1$ and   $r_2 $   for the group   given by rotating $C_2$ such that $r_1^p$ and $ r_2^q$ fix every point in $M$ and $r_1^p=r_2^q$. (Thus $r_1,r_2$ are rotations in opposite directions.) Moreover $r_1r_2=r_2r_1$. This shows that $H_{p,q}$ is a subgroup of $\mg$. 

Note that $r_1^p$ and $r_2^q$ are Dehn twists   of the annulus described in section \ref{sect 4.2}.

Suppose first that $p\ne q$. Then  each element of $\mg$ maps each boundary component to itself and, in particular, on each boundary component it is given by a rotation.
Moreover, each element of $\mg$  is determined by its values on $M$ up to composition with $r_1^p$, because if two elements $f,g$ agree on $M$, then $fg^{-1}$ fixes $M$, hence without loss of generality $fg^{-1}$ fixes each point on the boundary. It follows that $fg^{-1}\in\Mod$ and therefore $fg^{-1}$ is a power of a Dehn twist by Remark \ref{mgg}, hence a power of $r_1^p$. This shows that $\mg\cong H_{p,q}$.

Now suppose that $p=q$. Then the elements of $\mg$ may map one boundary component to the other. 
Exchanging the boundary components twice maps each boundary component to itself, and by the same argument as in the case $p\ne q$, we see that such an element is given by the rotations. Thus exchanging the boundary components corresponds to a subgroup of order two, whence $\mg\cong H_{p,p}\rtimes \ZZ_2$, as required.
 
\end{proof}

\begin{theorem}\label{theorem d}
Let $(S,M)$ be a disc with $p$ punctures with $p$ equal to $1$ or $2$, and suppose that the number of marked points on the boundary is at least $5$ if $p=1$ and at least $3$ if $p=2$. Then\[\aut= \mgm.\]

\end{theorem}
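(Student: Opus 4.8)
The plan is to reduce everything to the explicit computations of Section \ref{sect 3.3}. By Table \ref{table 1}, the once-punctured disc with $n$ boundary marked points is of Dynkin type $\mathbb{D}_n$ (here $n\ge 5$), while the twice-punctured disc with $c=n-3\ge 3$ boundary marked points is of euclidean type $\tilde{\mathbb{D}}_{n-1}$ (here $n-1\ge 5$). In both cases the cluster algebra is acyclic, so $\aut$ is known from Table \ref{table}: it is $\ZZ_n\times\ZZ_2$ for $\mathbb{D}_n$ and the group $G$ of (\ref{G}) for $\tilde{\mathbb{D}}_{n-1}$. Since Theorem \ref{theorem mg} already furnishes an injective homomorphism $\chi:\mgm\to\aut$, it remains only to prove that $\chi$ is surjective, and for this I would compute $\mgm$ and match it with $\aut$. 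Note that the hypotheses on the number of boundary marked points are exactly what excludes $\mathbb{D}_4$ and $\tilde{\mathbb{D}}_4$, whose extra $S_3$- and $S_4$-symmetries are not seen by the surface.

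For $p=1$ the argument is short. First I would determine $\mg$ exactly as in the disc case of Theorem \ref{theorem a}: every orientation-preserving homeomorphism fixes the unique interior puncture and induces an orientation-preserving cyclic permutation of the $n$ boundary marked points, that is, a rotation; and two homeomorphisms agreeing on $M$ differ by one that may be isotoped rel $M$ into $\homeos$, which is then isotopic to the identity because $\Mod=0$ for the punctured disc (Remark \ref{rem mg2}). Hence $\mg\cong\ZZ_n$. Since there is a single puncture, Remark \ref{rem 38} gives $\mgm=\mathcal{Z}\times\mg\cong\ZZ_2\times\ZZ_n$, a finite group of order $2n=|\aut|$. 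An injection between finite groups of equal order is a bijection, so $\chi$ is an isomorphism.

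For $p=2$ the target $\aut=G$ is infinite, so counting no longer works and I would instead exhibit the four generators $\tau,\sigma,\rho_1,\rho_n$ of $G$ in the image of $\chi$. The two tagging changes $\psi_{z_1}=\chi(1,z_1)$ and $\psi_{z_2}=\chi(1,z_2)$ of Lemma \ref{lemtag} lie in the image by construction; the point to check is that, on the transjective component $\Gamma_{tr}\cong\ZZ Q$, changing the tagging at a puncture swaps the two prongs of the corresponding fork of the $\tilde{\mathbb{D}}$-quiver, so that $\psi_{z_1},\psi_{z_2}$ realise the involutions $\rho_1,\rho_n$. Next I would produce a homeomorphism $s$ of the twice-punctured disc interchanging the two punctures and a Dehn twist $t$ about a curve enclosing both punctures, and show that $\phi(\bar s)$ and $\phi(\bar t)$ realise $\sigma$ and $\tau$. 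The relation $\sigma\rho_1=\rho_n\sigma$ of $G$ then records geometrically that conjugation by the puncture-swap interchanges the two tagging changes, which is exactly Corollary \ref{cor 37}(5), while $\sigma^2=\tau^{n-3}$ records that performing the half-turn twice equals the boundary Dehn twist raised to the number $c=n-3$ of boundary marked points. Once these four generators are in the image, $\chi$ is surjective, hence an isomorphism.

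The main obstacle is the topological computation underlying the case $p=2$: identifying the mapping class group $\mg$ of the twice-punctured disc precisely enough to see that $\phi(\mg)=\langle\sigma,\tau\rangle\le G$, with the single defining relation $\sigma^2=\tau^{n-3}$, and in particular pinning down which homeomorphisms induce the Auslander--Reiten translation $\tau$ and the glide reflection $\sigma$ on $\Gamma_{tr}$. Concretely I expect to show $\mg\cong\langle\tau,\sigma\mid \tau\sigma=\sigma\tau,\ \sigma^2=\tau^{n-3}\rangle$ and that the induced action of $\mg$ on $\mathcal{Z}\cong\ZZ_2^2$ fixes $\tau$ and has $\sigma$ transpose the two factors; feeding this into the semidirect-product description $\mgm=\mathcal{Z}\rtimes\mg$ of Lemma \ref{lem sd} reproduces precisely the presentation (\ref{G}) of $G$. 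Verifying the exponent $n-3$ in $\sigma^2=\tau^{n-3}$ from the geometry of the half-turn, and confirming that no further mapping classes survive in $\mg$, is the delicate step, and it is here that the exclusion of the case $c=2$ (type $\tilde{\mathbb{D}}_4$) becomes essential.
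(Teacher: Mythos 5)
Your overall route is the same as the paper's: reduce to the Section~\ref{sect 3.3} computations of $\aut$, compute $\mg$ topologically, settle $p=1$ by finiteness, and settle $p=2$ by matching $\mgm=\mathcal{Z}\rtimes\mg$ against the presentation (\ref{G}). Your $p=1$ argument (rotations give $\mg\cong\ZZ_n$, Remark~\ref{rem 38} gives $\mgm\cong\ZZ_n\times\ZZ_2$, then injectivity plus equal finite order) is exactly the paper's, with the counting step made explicit. For $p=2$, your computation $\mg=\langle r,s\mid rs=sr,\ s^2=r^{n-3}\rangle$ via boundary rotations and the puncture swap is also exactly the paper's.

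The problem is your dictionary in the $p=2$ case, which is precisely the step you single out as delicate. The paper's isomorphism $\phi:G\to\mgm$ is \emph{not} the naive one you propose: it sends $\tau\mapsto(r,\{z_1,z_2\})$, i.e.\ the Auslander--Reiten translation corresponds to the rotation \emph{combined with a change of tags at both punctures} (just as in type $\mathbb{D}$, where $\tau$ rotates a tagged arc and flips its tag at the puncture), and it sends $\sigma\mapsto(s,\emptyset)$ only when $n$ is odd, while $\sigma\mapsto(s,z_1)$ when $n$ is even. Consequently your claims that the pure Dehn twist realises $\tau$, that the pure swap realises $\sigma$, and hence that $\phi(\mg)=\langle\sigma,\tau\rangle$, are false: the pure mapping classes correspond to $\tau\rho_1\rho_n$ and to $\sigma$ times a parity-dependent tag change, and the relation $\sigma^2=\tau^{n-3}$ only closes up after the bookkeeping with Corollary~\ref{cor 37}(4) that the paper carries out (for $n$ even, $(s,z_1)^2=(s^2,\{z_1,z_2\})$ matches $(r,\{z_1,z_2\})^{n-3}=(r^{n-3},\{z_1,z_2\})$ precisely because $n-3$ is odd). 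Two remarks on how damaging this is. First, your surjectivity argument survives the correction: since $\rho_1,\rho_n$ lie in the image of $\chi$ by Lemma~\ref{lemtag}, the image still contains $\tau$ and $\sigma$ after multiplying the images of the pure mapping classes by tag changes, so $\chi$ is onto and hence an isomorphism. Second, if one only wants the abstract isomorphism $\mgm\cong G$ (which is what the paper literally verifies), your naive assignment does satisfy all the relations of (\ref{G}) and does define an isomorphism; but it is not the map induced by $\chi$, so it cannot be used, as you intend, to identify which cluster automorphisms specific homeomorphisms and tag changes induce. In short: right strategy, but the generator identification at the heart of your $p=2$ argument is incorrect as stated, and the parity-dependent correction is exactly the content of the paper's proof.
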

\begin{proof}
Suppose first that $(S,M)$ is a disc with one puncture  let $n$ be the number of marked points on the boundary. By our assumption, we have $n>4$. Then the cluster algebra is of type $\mathbb{D}_n$ and we know from section \ref{sect 3.3} that $\aut \cong \ZZ_n\times \ZZ_2$.
On the other hand, the mapping class group of the once punctured disc is equal to the mapping class group of the  unpunctured disc, see Remark \ref{rem mg2}, thus $\mg\cong\ZZ_n$.
Now it follows from Remark \ref{rem 38} that $\mgm\cong\ZZ_n\times \ZZ_2$ as required.

Suppose now that $(S,M)$ is a disc with two punctures, and let $n-3$ be the number of marked points on the boundary. By our assumption, we have $n>5$. Then the cluster algebra is of type $\tilde{\mathbb{D}}_{n-1}$, and we know from section \ref{sect_acyclic} that $\aut\cong G$, where
\begin{equation}\label{GG}
G=\left\langle \tau,\zs,\rho_1,\rho_n \left| \begin{array}{c}
 \rho_i^2=1 ,\tau \rho_i=\rho_i \tau \ (i=1,n)\\
\tau\zs=\zs\tau,\ \zs^2=\tau^{n-3} \\
\rho_1\zs=\zs\rho_n, \ \zs\rho_1=\rho_n\zs
\end{array}\right.\right\rangle
\end{equation}

The mapping class group $\mathcal{M}od$     of the disc with $p$ punctures (without any marked points on the boundary) is isomorphic to the braid group $B_p$ on $p$ strands, see Remark \ref{rem mg2}, thus in our situation it is isomorphic to $B_2\cong \ZZ$. Let $s$ be a generator of $B_2$. Then $s $ maps one puncture to the other and $s^2$ is isotopic to a rotation of the boundary by the angle $2\pi$ that fixes the punctures.

On the other hand, the elements of $\mg$ which are induced by the rotations of the boundary (fixing the punctures) form an infinite cyclic group, and we can choose a generator $r$ such that $r^{n-3}=s^2$. Clearly, $rs=s r$.

Up to composition with $r^{n-3}$ and $s$, any element of $\mg$ is determined by its values on $M$, because if $f,g\in \mg$ agree on $M$, then $fg^{-1}$ fixes each point in $M$, hence we can suppose without loss of generality that $fg^{-1}$ fixes each point on the
boundary. Thus since $\Mod$ is generated by $s$, it follows that $fg^{-1}$ is a power of $s$. 
This shows that $\mg=\langle r,s\mid s^2=r^{n-3}, rs=sr\rangle$.

We must show that $G\cong\mgm$. Denote the two punctures by $z_1,z_2$, and let  $\phi$ be the map from $G$ to $\mgm$ defined on the generators by 
\[\begin{array} {rcl}
\phi(\tau) &=&(r\,,\,\{z_1,z_2\})\\
\phi(\zs)&=& \left\{\begin{array}{ll}
(s,\emptyset ) &\textup{if $n$ odd}\\
(s,z_1) &\textup{if $n$ even}
\end{array}\right.\\
\phi(\rho_1) &=&(1,z_1)\\
\phi(\rho_n)&=& (1,z_2)
\end{array}
\]
and extended to $G$ by the homomorphism property.
One can easily check that $\phi $ preserves the relations of the group, for example if $n$ is even then
$$\phi(\zs^2)=(s,z_1)^2=(s^2,z_1\ominus s(z_1))=(s^2, \{z_1,z_2\}) $$
which is equal to
$$\phi(\tau^{n-3})=(r,\{z_1,z_2\})^{n-3} =(r^{n-3},\{z_1,z_2\}),$$ 
where the last identity follows from Corollary \ref{cor 37} (4), since $n$ is even.

To show that $\phi $ is injective suppose that $x=\tau^a\zs^b\rho_1^c\rho_n^d\in \textup{Ker}\, \phi$ for some integers $a,b,c,d$. Then 
$(1,\emptyset)=\phi(x)$, and by computing the first coordinate of this equation, we get $1=r^a s^b $. Consequently, since $\tau$ and $\sigma$ satisfy the same relations as $r$ and $s$, we have $1=\tau^a \zs^b$, and therefore $x=\rho_1^c\rho_n^d$.
Thus 
$$(1,\emptyset)=\phi(x)=(1,\{z_1\}^c\ominus\{z_2\}^d),$$
which implies that $c$ and $d$ are even, by Corollary \ref{cor 37}.(4), and thus $x=1$. This shows that $\phi$ is injective.

It remains to show that $\phi$ is surjective. Let $x=(r^as^b,\mathcal{P})\in \mgm$. Then $\phi(\tau^a\zs^b)=(r^as^b,\overline{\mathcal{P}})$, for some subset $\overline{\mathcal{P}}\subset \{z_1,z_2\}$, and multiplying with $\phi(\rho_1)$ or $\phi(\rho_2)$ if necessary, we see that $x$ lies in the image of $\phi$. This shows that $\phi $ is surjective, and
thus $\phi $ is an isomorphism.
 
\end{proof}

\begin{example}
We end this section with another look at Example \ref{ex torus}. The quiver $$\xymatrix@R15pt@C10pt{x_1\ar@<2pt>[rd]\ar@<-2pt>[rd] &&x_2 \ar@<2pt>[ll]\ar@<-2pt>[ll]\\
&x_3\ar@<2pt>[ru]\ar@<-2pt>[ru]
}$$ corresponds to a triangulation of the torus with one puncture, which can be seen easily using the plane as a universal cover and the triangulation shown on the left hand side of Figure \ref{fig torus}.
The edges are labeled $1,2,3$ instead of $x_1,x_2,x_3$ for brevity. Edges that have the same label are to be identified, and each point in Figure \ref{fig torus} is identified to the puncture. Thus in  the triangulation shown on the left hand side of Figure \ref{fig torus} there are exactly two triangles, both formed by edges $1,2,3$ and both having the same orientation. 

The picture in the middle of Figure \ref{fig torus} shows the triangulation corresponding to the seed obtained from the initial seed by mutating in $x_1 $, while
the image on the right hand side of Figure \ref{fig torus} shows the triangulation corresponding to the seed obtained   by mutating once more, this time in $x_2$. 
  
Geometrically, one can deform the picture on the left into the picture on the right by dragging the right end  upwards and the left end downwards. In the torus, this 
``deformation" corresponds to two Dehn twists along the closed curve labeled $3$. 
On the other hand, there is no orientation preserving homeomorphism transforming  the picture on the left into the one in the middle. Thus this mutation is not given by a mapping class. Of course we could have deduced this simply from the observation in Example \ref{ex torus} that this mutation   corresponds to an inverse cluster automorphism and not to a direct one.

\begin{figure}
\includegraphics{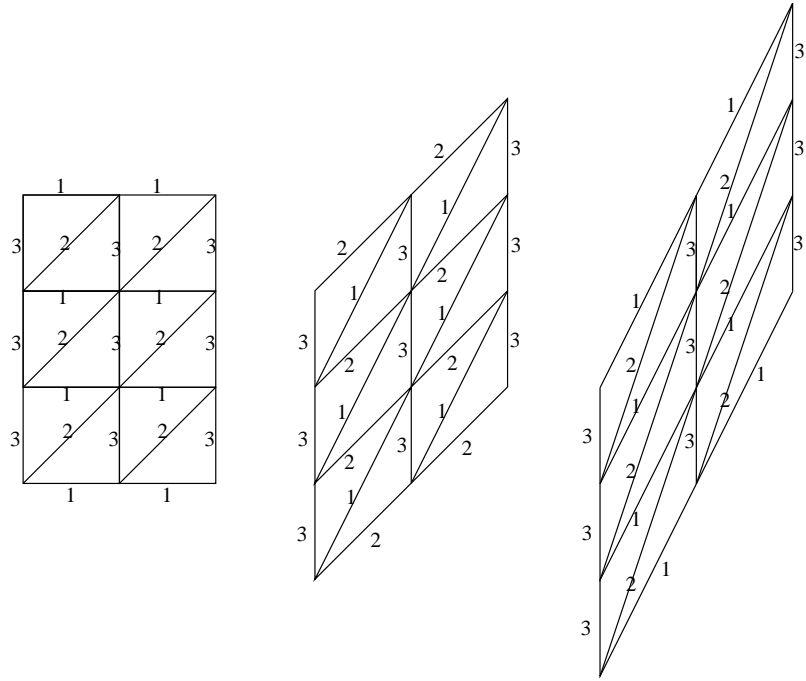}
\caption{Three triangulations of the torus}\label{fig torus}
\end{figure}
\end{example}

\end{section}
\begin{section}{Finiteness of the automorphism group}\label{sect 4}
In this section, we introduce the notion of \emph{automorphism finite} cluster algebras and  prove that for acyclic cluster algebras and for cluster algebras from surfaces it is equivalent to the notion of finite type cluster algebras. 
%Recall that we require that a sphere with one, two or three punctures;
%a monogon with zero or one puncture; 
%or a bigon or triangle without punctures are excluded surfaces.

We say that a cluster algebra $\mathcal{A}$ is \emph{automorphism finite} if  its automorphism group $\Aut$ is finite.

\begin{theorem}\label{finite}
Let $\mathcal{A}$ be a cluster algebra arising from an acyclic quiver or from a surface. Then $\mathcal{A}$ is automorphism finite if and only if $\mathcal{A}$ is of Dynkin type.
\end{theorem}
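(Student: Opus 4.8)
The plan is to prove the two implications separately, using throughout that $\mathcal{A}$ is of Dynkin type precisely when it is of finite type, i.e.\ when its set $\mathcal{X}$ of all cluster variables is finite. For the \emph{if} direction I would suppose $\mathcal{A}$ is of Dynkin type, so that $\mathcal{X}$ is finite. By Corollary \ref{cor2.7bis} every cluster automorphism maps clusters to clusters, hence permutes $\mathcal{X}$, so restriction gives a map $\Aut\to\mathrm{Sym}(\mathcal{X})$. By Remark \ref{rmk_initial}(b) a cluster automorphism is determined by its values on the initial cluster, so this map is injective. Thus $\Aut$ embeds into a finite symmetric group and is finite; this argument covers the acyclic and the surface cases uniformly.

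For the \emph{only if} direction I would show that whenever $\mathcal{A}$ is \emph{not} of Dynkin type the group $\Aut$ is infinite, treating the two families in turn. If $\mathcal{A}=\mathcal{A}(\amas,Q)$ is acyclic but not Dynkin, then $Q$ is representation-infinite, so by Remark \ref{rem 3.11} the automorphism of $\Gamma_{tr}$ induced by the Auslander--Reiten translation $\tau$ has infinite order; hence $\ZZ$ is a subgroup of $\aut\subseteq\Aut$ and $\Aut$ is infinite.

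If $\mathcal{A}$ arises from a surface $(S,M)$ and is not of Dynkin type, then by Theorem \ref{theorem mg} it suffices to prove that $\mg$ is infinite, since $\mg$ embeds into $\aut\subseteq\Aut$. By the classification of surface cluster algebras (Table \ref{table 1}), the only surfaces giving a finite type algebra are the unpunctured polygon (type $\mathbb{A}$) and the once-punctured polygon (type $\mathbb{D}$), so the hypothesis excludes precisely these. For every remaining surface I would exhibit an element of infinite order in $\mg$: if the genus is at least one, or the genus is zero with at least two boundary components, an infinite-order Dehn twist is supplied by Remark \ref{rem mg1} together with the embedding $\Mod\hookrightarrow\mg$ of Lemma \ref{mgg}. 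Otherwise the surface is a closed sphere, which by the standing assumptions carries at least four punctures, or a disc that is neither the unpunctured nor the once-punctured case and hence carries at least two punctures; in both of these configurations the Dehn twist about a simple closed curve enclosing exactly two punctures is an orientation-preserving homeomorphism fixing $M$ pointwise that is not isotopic to the identity relative to $M$, and it has infinite order in $\mg$ (for the disc this reflects that the mapping class group of the $p$-punctured disc is the braid group $B_p$, infinite for $p\ge 2$; cf.\ Remark \ref{rem mg2}). In each case $\mg$, and hence $\Aut$, is infinite.

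The routine parts are the \emph{if} direction and the acyclic \emph{only if} case, which follow immediately from the cited results. The main obstacle is the surface \emph{only if} case: one must first pin down that the finite-type surfaces are exactly the unpunctured and once-punctured polygons, and then guarantee an infinite-order mapping class in all remaining configurations. The delicate point is the purely punctured discs and spheres, where boundary-fixing Dehn twists become trivial but a twist about a curve surrounding two punctures does not, so that the infinite-order element must be produced from the puncture structure rather than from Remark \ref{rem mg1}.
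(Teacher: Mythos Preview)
Your proof is correct and, for the \emph{only if} direction, follows the same strategy as the paper: in the acyclic non-Dynkin case both you and the paper invoke the infinite-order Auslander--Reiten translation, and in the surface case both reduce to producing an infinite-order element in $\mg$ via Lemma~\ref{mgg} and Theorem~\ref{theorem mg}, treating the positive-genus/multi-boundary cases by Remark~\ref{rem mg1} and handling the remaining disc and sphere cases separately. Your treatment of these last two cases is slightly more uniform than the paper's---you exhibit in both a Dehn twist about a curve enclosing two punctures, whereas the paper cites the braid group $B_p$ for the multiply-punctured disc and the existence of a free subgroup for the sphere with $\ge 4$ punctures---but the arguments are equivalent in spirit.

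The one genuine difference is the \emph{if} direction. The paper simply reads off finiteness of $\Aut$ from the explicit computations in Table~\ref{table}. Your argument is more conceptual and more elementary: finite type means $\mathcal{X}$ is finite, cluster automorphisms permute $\mathcal{X}$ by Corollary~\ref{cor2.7bis}, and the restriction map $\Aut\to\mathrm{Sym}(\mathcal{X})$ is injective by Remark~\ref{rmk_initial}(b). This avoids any case-by-case calculation and works uniformly for the acyclic and surface cases, at the cost of not identifying the group explicitly.
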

\begin{proof}
 Sufficiency follows from Table \ref{table}. To prove necessity, suppose first that $\mathcal{A}$ is arising from an acyclic quiver $Q$. By Theorem \ref{theorem_main}, $\aut$ is isomorphic to the quotient of the group of automorphisms of the transjective component $\zG_{tr}$ of the Auslander-Reiten quiver of the cluster category $\mathcal{C}_Q$ modulo the stabiliser of the points in $\zG_{tr}$.
 
 If $Q$ is not of Dynkin type, then the Auslander-Reiten translation induces an element of $\textup{Aut}(\zG_{tr})$ of infinite order, acting freely on the points of $\zG_{tr}$. Thus $\Aut$ is infinite if $Q$ is acyclic and not Dynkin.
 
 Suppose now that $\mathcal{A}$ arises from a surface $(S,M)$. 
 By Lemma \ref{mgg}, the mapping class group $\Mod$ of the surface $S$ is a subgroup of $\mg$, which in turn is  isomorphic to a subgroup of $\Aut$, by Theorem \ref{theorem mg}. So in order to show that $\Aut$ is infinite, it suffices to find an infinite subgroup in $\Mod$.
  
 By Remark \ref{rem mg1}, there exists a Dehn twist which generates an infinite cyclic subgroup of $\Mod$ if $S$ has genus at least one or $S$ has genus zero and two or more boundary components.
  
There remain the cases where $S$ is a disc or a sphere. If $S$ is a disc with $p\ge 2$ punctures, then the braid group $B_p$ is an infinite subgroup of the mapping class group of $S$.
 In the cases where $p=0$ or $1$, we have that $\mathcal{A}$ is of Dynkin type $\mathbb{A}$ or $\mathbb{D}$, respectively. 
 
 Finally, if $S$ is a sphere with $p$ punctures, then by our assumption $p\ge 4$, and then it is known that the mapping class group of $S$ has a free subgroup, see \cite[4.2]{FM}.
\end{proof}

\begin{remark}
For a sphere with 3 punctures, the mapping class group is $S_3$, which is a finite group. However the sphere with 3 or less punctures is excluded in the construction of cluster algebras from surfaces in \cite{FST}.
\end{remark}
\end{section}

\affiliationone{% in this example, two authors share an institution
   Ibrahim Assem and Vasilisa Shramchenko\\
   D\'epartement de math\'ematiques\\
   Universit\'e de Sherbrooke\\
2500, boul. de l'Universit\'e,\\
Sherbrooke, Qu\'ebec,
J1K 2R1\\
   Canada
   \email{Ibrahim.Assem@USherbrooke.ca\\
   Vasilisa.Shramchenko@USherbrooke.ca}}
   \affiliationtwo{
   Ralf Schiffler\\
   Department of Mathematics \\
196 Auditorium Road\\
University of Connecticut, U-3009\\
Storrs, CT 06269-3009\\
USA
\email{schiffler@math.uconn.edu}}
\end{document}